\def\RSsubtxt{section~}\newref{sub}{name = \RSsubtxt}}
\def\RSthmtxt{theorem~}\newref{thm}{name = \RSthmtxt}}
\def\RSlemtxt{lemma~}\newref{lem}{name = \RSlemtxt}}
\theoremstyle{plain}
\newtheorem{thm}{\protect\theoremname}[section]
  \theoremstyle{definition}
  \newtheorem{defn}[thm]{\protect\definitionname}
  \theoremstyle{remark}
  \newtheorem*{acknowledgement*}{\protect\acknowledgementname}
  \theoremstyle{remark}
  \newtheorem*{rem*}{\protect\remarkname}
  \theoremstyle{plain}
  \newtheorem{lem}[thm]{\protect\lemmaname}
  \theoremstyle{remark}
  \newtheorem{rem}[thm]{\protect\remarkname}
  \theoremstyle{definition}
  \newtheorem*{condition*}{\protect\conditionname}
  \theoremstyle{remark}
  \newtheorem*{claim*}{\protect\claimname}
  \theoremstyle{plain}
  \newtheorem{prop}[thm]{\protect\propositionname}
  \theoremstyle{plain}
  \newtheorem{cor}[thm]{\protect\corollaryname}
  \theoremstyle{plain}
  \newtheorem*{thm*}{\protect\theoremname}
  \theoremstyle{remark}
  \newtheorem{claim}[thm]{\protect\claimname}
 \newlist{casenv}{enumerate}{4}
 \setlist[casenv]{leftmargin=*,align=left,widest={iiii}}
 \setlist[casenv,1]{label={{\itshape\ \casename} \arabic*.},ref=\arabic*}
 \setlist[casenv,2]{label={{\itshape\ \casename} \roman*.},ref=\roman*}
 \setlist[casenv,3]{label={{\itshape\ \casename\ \alph*.}},ref=\alph*}
 \setlist[casenv,4]{label={{\itshape\ \casename} \arabic*.},ref=\arabic*}
\date{}
  \providecommand{\acknowledgementname}{Acknowledgement}
  \providecommand{\claimname}{Claim}
  \providecommand{\conditionname}{Condition}
  \providecommand{\corollaryname}{Corollary}
  \providecommand{\definitionname}{Definition}
  \providecommand{\lemmaname}{Lemma}
  \providecommand{\propositionname}{Proposition}
  \providecommand{\remarkname}{Remark}
  \providecommand{\theoremname}{Theorem}
 \providecommand{\casename}{Case}
\providecommand{\theoremname}{Theorem}
\begin{document}

\newcommand{\girth}{\rm{girth}}
\newcommand{\st}{\rm{st}}
\newcommand{\St}{\rm{St}}
\newcommand{\Lk}{\rm{Lk}}
\newcommand{\id}{\rm{id}}
\newcommand{\Aut}{\rm{Aut}}
\newcommand{\Isom}{\rm{Isom}}
\newcommand{\Fix}{\rm{Fix}}
\newcommand{\Stab}{\rm{Stab}}
\newcommand{\Homeo}{\rm{Homeo}}
\newcommand{\Hs}{\mathcal{H}}
\newcommand{\Hhat}{\hat{\mathcal{H}}}
\newcommand{\hhs}{\mathfrak{h}}
\newcommand{\hhat}{\hat{\mathfrak{h}}}
\newcommand{\khs}{\mathfrak{k}}
\newcommand{\khat}{\hat{\mathfrak{k}}}

\title{On Regular CAT(0) Cube Complexes\\
{\large{and the simplicity of automorphism groups of rank-one CAT(0)
cube complexes }}}

\author{Nir Lazarovich\thanks{Supported by The Adams Fellowship Program of the Israel Academy of Sciences and Humanities.} }
\maketitle
\begin{abstract}
We provide a necessary and sufficient condition on a finite flag simplicial
complex, $L$, for which there exists a unique CAT(0) cube complex
whose vertex links are all isomorphic to $L$. We then find new examples
of such CAT(0) cube complexes and prove that their automorphism groups
are virtually simple. The latter uses a result, which we prove in
the appendix, about the simplicity of certain subgroups of the automorphism
group of a rank-one CAT(0) cube complex. This result generalizes previous
results by Tits \cite{Tit70} and by Haglund and Paulin \cite{HaPa98}.
\end{abstract}

\section{Introduction}

Over the past years CAT(0) cube complexes have played a major role
in geometric group theory and have provided many examples of interesting
group actions on CAT(0) spaces. In the search for highly symmetric
CAT(0) cube complexes -- just as for their 1-dimensional analogues,
trees -- it is natural to consider the sub-class of \emph{regular}
CAT(0) cube complexes, i.e., cube complexes with the same link at
each vertex.

More precisely, recall that a CAT(0) cube complex is a 1-connected
cube complex whose vertex links are flag simplicial complexes (see
\cite{Gro87}). Let $L$ be a fixed finite flag simplicial complex.
An \emph{$L$-cube-complex} is a cube complex whose vertex links are
all isomorphic to $L$. For all $L$, the Davis complex $D(L)$ of
the right-angled Coxeter group $W_{L}$ associated to $L$ is an example
of a CAT(0) $L$-cube-complex (see Subsection \ref{sub:Right-angled-Coxeter-groups}
for more details).

A crucial difference between general CAT(0) $L$-cube-complexes and
their 1-dimensional analogues -- regular trees -- is that they are
not necessarily unique. This naturally raises the question of determining
whether or not there is a unique CAT(0) $L$-cube-complex. This question
can also be viewed as the cube-complex analogue of a similar question
for polygonal complexes that appeared in the survey paper of Farb,
Hruska and Thomas (see \cite{FHT08}). The analogue question for
polygonal complexes has been studied in various works, showing uniqueness
for certain links on the one hand, as in \cite{Bou97,GHST14,Hag02,Laz14,Swi98,Wis96}
but also finding links for which there is a continuum of non-isomorphic
complexes on the other, as in \cite{BaBr94,Hag91}.

A unique CAT(0) $L$-cube-complex is known to exist for some links
$L$, including:
\begin{itemize}
\item Any collection of isolated vertices -- i.e., the link of a regular
tree.
\item The simplex $\Delta^{d}$ for all $d\in\mathbb{N}$ -- i.e., the link
of the cube complex consisting of one $(d+1)$-dimensional cube.
\item The cycle graph $C_{n}$ for $n\ge4$ -- i.e., the link of a regular
square tiling of the Euclidean/hyperbolic plane.
\item The complete bipartite graph $K_{n,m}$ -- i.e., the link of a product
of two regular trees (see \cite{Wis96})
\item Any trivalent, 3-arc-transitive graph (see \cite{Swi98})
\end{itemize}
In fact, the question of uniqueness for CAT(0) $L$-square-complexes
(and other polygonal complexes) -- i.e, when $L$ is a graph -- was
answered in a previous paper by the author (see \cite{Laz14}).
Moreover, a fuller characterization of the graph condition given there
together with more examples of such graphs can be found in \cite{GHST14}.

In this paper we show that the following combinatorial condition on
$L$ is necessary and sufficient for uniqueness of CAT(0) $L$-cube-complex.
\begin{defn}
The simplicial complex $L$ is \emph{superstar-transitivity }if for
any two simplices $\sigma,\sigma'$ and any isomorphism $\phi:\st_{L}(\sigma)\to\st_{L}(\sigma')$,
sending $\sigma$ to $\sigma'$, there exists an automorphism $\Phi:L\to L$
such that $\Phi|_{\st_{L}(\sigma)}=\phi$.
\end{defn}
The main theorem is thus the following.
\begin{thm}[\label{Uniqueness-of-L-cube-complexes}Uniqueness of CAT(0) $L$-cube-complexes]
 Let $L$ be a finite flag simplicial complex. The associated Davis
complex $D(L)$ is the unique CAT(0) $L$-cube-complex if and only
if $L$ is superstar-transitive.
\end{thm}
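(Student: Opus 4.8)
The plan is to establish the two implications separately, in each case through a ``local-to-global'' study of the combinatorial geometry of a CAT(0) cube complex around a basepoint.

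\emph{Sufficiency.} Assume $L$ is superstar-transitive. Since $D(L)$ is itself a CAT(0) $L$-cube-complex, it suffices to show that any two CAT(0) $L$-cube-complexes $X$ and $Y$ are isomorphic. Fix base vertices $x_{0}\in X$ and $y_{0}\in Y$; as $\Lk_{X}(x_{0})\cong L\cong\Lk_{Y}(y_{0})$, there is an isomorphism $\St_{X}(x_{0})\to\St_{Y}(y_{0})$ of the closed stars. I would construct an isomorphism $X\to Y$ as the direct limit of a chain of isomorphisms $f_{n}\colon B_{n}(x_{0})\to B_{n}(y_{0})$ between the combinatorial $n$-balls about the basepoints, each extending the previous one. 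The inductive step rests on two standard features of a CAT(0) cube complex $X$ with a basepoint $x_{0}$: first, the combinatorial distance to $x_{0}$ changes by exactly $1$ across every edge, so each edge at a vertex $v$ is either ``descending'' or ``ascending'', according to whether its dual hyperplane separates $v$ from $x_{0}$; second, the descending edges at $v$ span a single cube $c_{v}$ --- two hyperplanes each separating $v$ from $x_{0}$ and each dual to an edge at $v$ must cross, by the usual halfspace-nesting argument. Consequently the passage from $B_{n}(x_{0})$ to $B_{n+1}(x_{0})$ is a controlled local operation at the vertices at distance $n$: around each such $v$ one completes the already-built part of $\St_{X}(v)$ to a full copy with link $L$, and one adjoins the new vertices at distance $n+1$, glued to $B_{n}(x_{0})$ along their descending cubes.

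To carry this out coherently I would adjoin the stars of the distance-$n$ vertices one at a time; at each such step the portion of the new star $\St_{X}(v)$ already present is a union of residues $\St(c)$ of cubes $c\ni v$, and matching it with the corresponding subcomplex on the $Y$-side is exactly the problem of extending an isomorphism between two residues to an isomorphism between the ambient vertex stars. This is precisely what superstar-transitivity provides: a residue $\St(c)$, viewed from a vertex $v\in c$, is a copy of $\st_{L}(\sigma)$ for the simplex $\sigma\subseteq\Lk_{X}(v)\cong L$ determined by $c$, and the extension asked for is exactly the extension of an isomorphism $\phi\colon\st_{L}(\sigma)\to\st_{L}(\sigma')$ to an automorphism $\Phi$ of $L$. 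I expect the main obstacle in this direction to be the coherence bookkeeping: the separate local extensions must be chosen so as to agree on all overlaps --- the new vertices lying in several descending cubes, and the distance-$n$ vertices joined inside $B_{n}(x_{0})$ by edges or cubes --- so that the assembled map is a genuine isomorphism rather than merely a compatible family of germs. The key point making this possible is that every such overlap is again a residue, so the extension property of $L$ can be applied repeatedly to realise each partial identification as the restriction of a global isomorphism.

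\emph{Necessity.} Conversely, assume $L$ is not superstar-transitive, witnessed by simplices $\sigma,\sigma'$ and an isomorphism $\phi\colon\st_{L}(\sigma)\to\st_{L}(\sigma')$ with $\phi(\sigma)=\sigma'$ that extends to no automorphism of $L$. I would build a CAT(0) $L$-cube-complex not isomorphic to $D(L)$, contradicting uniqueness. Realise $\sigma$ and $\sigma'$ as cubes $c$ and $c'$ of dimension $\dim\sigma$, so that $\st_{L}(\sigma)$ and $\st_{L}(\sigma')$ become the residues $\St(c)$ and $\St(c')$ and $\phi$ becomes an isomorphism $\Phi_{0}\colon\St(c)\to\St(c')$ with $\Phi_{0}(c)=c'$. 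I would then run the inductive construction of the first part but exploit the ambiguity created by $\phi$: at the stage where some vertex's partially-built link equals $\st_{L}(\sigma)$, complete that link using $\phi$ rather than the restriction of an automorphism of $L$, and continue, keeping the complex simply connected with all links flag and eventually isomorphic to $L$. Because each completion step is performed along residues and by residue-isomorphisms, the resulting complex $X$ is a CAT(0) $L$-cube-complex carrying an essential ``$\phi$-twist'' along the cube $c$. The crux --- and, I expect, the principal difficulty of the whole theorem --- is to prove $X\ncong D(L)$: a non-standard completion choice does not by itself alter the isomorphism type, so one must show that the $\phi$-twist is rigid. I would argue by contradiction: an isomorphism $X\to D(L)$, restricted to a neighbourhood of the twisted cube, would yield as its ``derivative'' at a vertex $v\in c$ an automorphism of $\Lk_{X}(v)\cong L$ agreeing with $\phi$ on $\st_{L}(\sigma)$ --- contradicting the choice of $\phi$. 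Making this last argument precise, and in particular pinning down exactly which automorphism of $L$ is forced and why it must restrict to $\phi$, is the step where I expect the real work to lie.
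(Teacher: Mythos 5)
Your sufficiency argument is essentially the paper's: ordering vertices by distance from a basepoint, using the fact that the descending edges at a vertex span a single cube, and invoking superstar-transitivity to extend the already-determined star-of-a-simplex isomorphism to a full link isomorphism is exactly the paper's admissible-enumeration/extension-lemma induction, and the coherence bookkeeping you flag is handled there by the compatibility condition built into the notion of an $n$-admissible map. That half of your plan is sound.

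The necessity direction, however, has a genuine gap --- in fact two. First, the construction: ``run the inductive construction but complete one link using $\phi$'' does not by itself produce a complex; the induction of the first half builds a map between two complexes that already exist, and as you yourself note, a non-standard completion choice need not change the isomorphism type at all. The paper instead constructs the exotic complex globally by surgery on $D(L)$: for $k=0$ it cuts along a hyperplane (identified, via the structure of the Davis complex, with $D(\Lk(\xi,L))$) and reglues the two halfspaces by the automorphism induced by $\phi$; for $k\ge1$ it takes the sector bounded by $k+1$ pairwise crossing hyperplanes corresponding to the vertices of $\sigma$ and reglues it along its boundary. Crucially, the minimality of $k$ is used first to reduce to $\sigma=\sigma'$ with $\phi$ the identity on the star of a codimension-one face, which is what makes the regluing automorphism well defined on the whole sector boundary; nothing in your sketch plays this role. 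Second, and more seriously, your proposed proof that $X\ncong D(L)$ --- that an isomorphism would have as its ``derivative'' at a vertex of the twisted cube an automorphism of $L$ agreeing with $\phi$ on $\st(\sigma)$ --- is precisely the naive argument that fails: an isomorphism $X\to D(L)$ carries no reason to localize the twist at any particular vertex, and its induced link map at a vertex of the glued cube is just some automorphism of $L$, not one restricting to $\phi$. The paper's actual argument exploits a symmetry that $D(L)$ has and $X$ cannot have: for $k=0$, every hyperplane of $D(L)$ admits a reflection swapping its halfspaces, and such a reflection at the distinguished hyperplane of $X$ would force $\phi$ to extend; for $k\ge1$, one pulls back the $k+1$ commuting reflections through the relevant hyperplanes of $D(L)$, records the induced maps $r_{i,v}$ on links together with their transfer-map and commutation properties, and then composes them along a Hamiltonian cycle of the cube $\{0,1\}^{[k]}$ (the inductively defined automorphisms $\Phi_{\pi}^{v}$) to manufacture a single automorphism of $L$ whose restriction to $\st(\sigma)$ is $\phi$, contradicting non-extendibility. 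This combinatorial bookkeeping is the real content of the necessity direction, and it is exactly the step your proposal leaves open.
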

Except for the above exmples, we provide in Subsection \ref{sub:The-Kneser-complex}
a new family of examples of unique CAT(0) $L$-cube-complexes of arbitrary
dimension, namely the Kneser Complexes $K_{n}^{d}$.

As in the case of regular trees, one might expect that these unique
CAT(0) $L$-cube-complexes exhibit rich automorphism group actions.
For instance, we prove that one can extend any automorphism of a hyperplane
to an automorphism of the whole complex. 

In fact we show that the following stronger property holds for any
collection of pairwise transverse hyperplanes in a unique CAT(0) $L$-cube-complex.
\begin{defn}
Let $X$ be a CAT(0) cube complex. A set of pairwise transverse hyperplanes
$\hhat_{1},\ldots,\hhat_{d}$ satisfy the \emph{hyperplane automorphism
extension property} (HAEP) if for all $\hat{f}\in\Aut\left(\hhat_{1}\cup\ldots\cup\hhat_{d}\right)$
there exists an automorphism $f\in\Aut X$ such that $f$ stabilizes
$\hhat_{1}\cup\ldots\cup\hhat_{d}$ and $f|_{\hhat_{1}\cup\ldots\cup\hhat_{d}}=\hat{f}$.
\end{defn}
We then use it to show that certain unique CAT(0) $L$-cube-complexes
have a virtually simple automorphism group. Note that the latter generalizes
the virtual simplicity of the automorphism group of a regular tree
proved in \cite{Tit70}. Also note that this is not true in general,
for instance, the automorphism groups of certain unique regular CAT(0)
cube complexes are discrete and thus virtually Coxeter groups, which
virtually map onto free groups.

The outline of the paper is as follows:

In section 2 we set the ground for the proof of the main theorem:
we define the superstar-transitivity conditions; we introduce an inductive
method on the vertices of a CAT(0) cube complex; and recall the definition
of the Davis complexes for right-angled Coxeter groups.

In section 3 we prove the main theorem.

In section 4 we prove that the \emph{hyperplane automorphism extension
property} (HAEP) holds for the unique CAT(0) $L$-cube-complexes.

In section 5 we show how the HAEP can be used to prove that certain
automorphism groups are virtually simple. We then give some examples
of unique CAT(0) $L$-cube-complexes which have a virtually simple
automorphism group. 

In the appendix we use the rank rigidity theorem (see \cite{CaSa11})
for cube complexes to generalize previous results by Tits (\cite{Tit70})
and by Haglund and Paulin (\cite{HaPa98}) about the simplicity
of the subgroup of the automorphism group of a CAT(0) cube complex
generated by all halfspace fixators. This theorem is key in the virtual
simplicity proved in section 5. A similar result is proved by Caprace
in \cite{Cap14} for the type-preserving automorphism groups of
right-angled buildings.
\begin{acknowledgement*}
The author would like to thank Pierre-Emmanuel Caprace for carefully
reading the manuscript and for providing useful and insightful comments.
His main suggestions provide a complete analysis of the other cases
in Theorem \ref{thm:v.simplicity of square complexes}, which unfortunately
were not included in the final version of this manuscript. The author
would also like to thank his advisor, Michah Sageev, for his never-ending
encouragement and helpful advice, and Fr\'ed\'eric Haglund for some helpful
conversations and exchange of ideas. The author sincerely appreciates
and acknowledges the support he received by the Adams Fellowship Program
of the Israel Academy of Sciences and Humanities.
\end{acknowledgement*}

\section{Preliminaries}

\subsection{Superstar-transitivity and basic definitions}

Let $X$ be a cube complex, and let $L$ be a finite flag simplicial
complex. Recall the following definitions:
\begin{itemize}
\item For a vertex $x\in X^{(0)}$, \emph{the} \emph{link of $x$ in} $X$,
$\Lk(x,X)$, is the simplicial complex whose vertices are the edges
incident to $x$ and whose simplices are the collections of edges
which span cubes in $X$.
\item For $L^{\prime}\subset L$, \emph{the} \emph{open star of $L^{\prime}$
in} $L$, $\st_{L}(L^{\prime})$, is the union of all open simplices
of $L$ whose closure intersects that of $L^{\prime}$.
\item Let $e$ be the directed edge in $X$ which connects $x$ to $y$,
and let $\xi$ and $\zeta$ be the vertices corresponding to $e$
in $\Lk(x,X)$ and $\Lk(y,X)$ respectively. \emph{The} \emph{transfer
map} \emph{along} $e\in X^{(1)}$ is the isomorphism $\tau_{e}:\st_{\Lk(x,X)}(\xi)\to\st_{\Lk(y,X)}(\zeta)$
which sends a simplex incident to $\xi$ in $\Lk(x,X)$ to the simplex
which represent the same cube in $\Lk(y,X)$.
\end{itemize}
Also recall that CAT(0) cube complexes naturally carry a combinatorial
structure,\emph{ }coming from the construction of hyperplanes and
half-spaces. For the definition and more details see \cite{Sag95}.
\begin{defn}
For $k\in\mathbb{N}\cup\left\{ 0\right\} $, the complex $L$ is said
to be \emph{$\st(\Delta^{k})$-transitive} if for any pair of (not
necessarily distinct) $k$-simplices $\sigma,\sigma^{\prime}$ of
$L$ and any isomorphism $\phi:\st(\sigma)\to\st(\sigma^{\prime})$
there exists an automorphism $\Phi$ of $L$ such that $\Phi|_{\st(\sigma)}=\phi$.
\end{defn}
Note that $L$ is superstar-transitive if and only if it is $\st(\Delta^{k})$-transitive
for all $k\ge0$.
\begin{rem*}
Note that, unlike graphs, a simplicial flag complex $L$ can be superstar-transitive
because there are no non-trivial isomorphisms between stars of vertices
in $L$.
\end{rem*}

\subsection{Induction on the vertices of a CAT(0) cube complex}

In this subsection we describe the properties of a certain enumeration
of the vertices of $X$. 
\begin{defn}
Let $J$ be a (typically infinite) segment in $\mathbb{Z}$ containing
$0$. An enumeration $\left\{ x_{n}\right\} _{n\in J}$, of the vertices
of a proper CAT(0) cube complex $X$ will be called \emph{admissible}
if the following hold for all $n>0$:
\begin{enumerate}
\item There exist adjacent vertices $x_{m}$ of $x_{n}$ with $m<n$.
\item The adjacent vertices $x_{m}$ of $x_{n}$ with $m<n$ are contained
in one cube.
\item For all $i<n$ such that $x_{i}$ and $x_{n}$ share a cube, $C$,
there exists $m<n$ such that $x_{m}$ is adjacent to $x_{n}$ and
$x_{m}\in C$.
\end{enumerate}
For a cube $C$ in $X$ and an admissible enumeration $\left\{ x_{n}\right\} _{n\in J}$
we denote by $C_{\le n}=\left\{ x_{i}\,|\, i\le n,\, x_{i}\in C\right\} $.\end{defn}
\begin{lem}
\label{lem: induction on Ccc}Let $X$ be a proper CAT(0) cube complex,
and let $x_{0}$ be a vertex of $X$. Let $\left\{ x_{n}\right\} _{n\ge0}$
be an enumeration of $X^{(0)}$ such that $d(x_{n},x_{0})$ is non-decreasing
(where $d$ is length metric of $X^{(1)}$). Then $\left\{ x_{n}\right\} _{n\ge0}$
is admissible.\end{lem}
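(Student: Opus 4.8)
The plan is to verify the three defining conditions of an admissible enumeration in turn, using the standard dictionary in which $d(x,y)$ is the number of hyperplanes of $X$ separating $x$ from $y$ and $X^{(1)}$ is a median graph; I write $W(x,y)$ for the set of separating hyperplanes and $m(\cdot,\cdot,\cdot)$ for the median of three vertices. Two bookkeeping observations underlie everything. First, for adjacent vertices $u,w$ the sets $W(x_0,u)$ and $W(x_0,w)$ differ in exactly one element, the hyperplane $\hhat$ dual to the edge $[u,w]$, so $d(u,x_0)$ and $d(w,x_0)$ differ by exactly $1$. Second, since $d(x_n,x_0)$ is non-decreasing in $n$, any vertex with distance to $x_0$ strictly less than $d(x_n,x_0)$ precedes $x_n$. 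Combining these: a neighbour $x_m$ of $x_n$ has $m<n$ if and only if $d(x_m,x_0)=d(x_n,x_0)-1$, equivalently if and only if the hyperplane dual to $[x_m,x_n]$ separates $x_0$ from $x_n$, in which case $x_m$ lies on the same side of that hyperplane as $x_0$.

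Conditions (1) and (3) then come quickly. For (1): $n>0$ forces $x_n\neq x_0$, so the first edge of an edge-geodesic from $x_n$ to $x_0$ reaches a neighbour strictly closer to $x_0$, which by the above has index $<n$. For (3): given a cube $C$ containing $x_n$ and some $x_i$ with $i<n$, let $y$ be the vertex of $C$ nearest to $x_0$. This minimiser is unique — the hyperplanes not crossing $C$ contribute equally to $d(\cdot,x_0)$ on all of $C$, while $x_0$ lies on a definite side of each of the $\dim C$ hyperplanes crossing $C$, and $y$ is exactly the vertex of $C$ on $x_0$'s side of all of them. If $y$ were $x_n$ we would get $d(x_i,x_0)>d(x_n,x_0)$, contradicting $i<n$; hence $y\neq x_n$, and flipping one coordinate of $C$ from $x_n$ towards $y$ produces a neighbour $x_m\in C$ of $x_n$ with $d(x_m,x_0)<d(x_n,x_0)$, so $m<n$, which is what (3) requires.

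The substance of the lemma, and the step I expect to be the main obstacle, is condition (2). Let $x_{m_1},\dots,x_{m_k}$ be all neighbours of $x_n$ with index $<n$. Since vertex links in $X$ are flag it suffices to show that any two of them, $a=x_{m_j}$ and $b=x_{m_l}$, span a square with $v:=x_n$; I would prove this by producing the fourth corner. Let $\hhat_a,\hhat_b$ be the hyperplanes dual to $[v,a],[v,b]$ and let $A_a,A_b$ be their halfspaces not containing $v$; by the second observation $x_0\in A_a\cap A_b$, while $a\in A_a\cap A_b^{*}$ and $b\in A_a^{*}\cap A_b$. Setting $w:=m(a,b,x_0)$, the halfspace description of the median gives $w\in A_a\cap A_b$ and $W(v,w)=\{\hhat_a,\hhat_b\}$ (any other hyperplane has $a$ and $b$, hence $w$, on $v$'s side), so $d(v,w)=2$; and since $d(a,b)=2$ with $w$ on an $a$–$b$ geodesic and $w\notin\{a,b\}$, also $d(a,w)=d(b,w)=1$. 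Thus $v,a,w,b$ are the four vertices of a square of $X$. Applying this to every pair shows the edges $[x_n,x_{m_1}],\dots,[x_n,x_{m_k}]$ pairwise span squares and hence, by flagness, span one cube, giving (2). The delicate point here is that two hyperplanes merely having dual edges at a common vertex need not cross at all; what makes the argument go through is precisely that the basepoint $x_0$ — which the non-decreasing hypothesis places on the far side of $\hhat_a$ and $\hhat_b$ simultaneously — can be fed into the median to manufacture the missing corner.
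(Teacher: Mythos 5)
Your proof is correct, and its crux is the same as the paper's: for condition (2) you place the basepoint $x_0$ across both hyperplanes at once, i.e.\ in the quadrant $A_a\cap A_b$, which is precisely the paper's observation that $x_0,x_n,x_s,x_t$ occupy the four quadrants of $\hhat_a,\hhat_b$ and hence force transversality. The differences are in how the two verifications are finished. For (2) the paper stops at pairwise transversality and leaves implicit the standard facts that transverse hyperplanes dual to edges at a common vertex span a square (no inter-osculation) and that flagness of the link then assembles a cube; you make the fourth corner explicit via the median $w=m(a,b,x_0)$, computing $W(v,w)=\{\hhat_a,\hhat_b\}$ and $d(a,w)=d(b,w)=1$. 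Two small remarks there: you should record that $\hhat_a\ne\hhat_b$ (immediate, since $W(a,b)=W(a,v)\,\triangle\, W(v,b)$ is nonempty when $a\ne b$), and even with the chordless $4$-cycle $v,a,w,b$ in hand, the step ``hence a square of $X$'' is exactly the no-inter-osculation fact (or convexity of the carrier of $\hhat_a$), so your argument ultimately rests on the same background as the paper's, just with more of it spelled out. For (3) your route is genuinely different and arguably cleaner: you take the gate of $x_0$ in the cube $C$ (the unique vertex of $C$ lying on $x_0$'s side of every hyperplane crossing $C$) and flip one coordinate of $x_n$ towards it, producing a preceding neighbour inside $C$; the paper instead picks a preceding neighbour $x_m$ whose dual hyperplane separates $x_i$ from $x_n$ and rules out non-transverse separating hyperplanes to force $x_m\in C$. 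Both yield the same conclusion; yours trades the paper's hyperplane bookkeeping for the convexity/gatedness of cubes, and your verification of (1) matches the paper's.
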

\begin{proof}
Let $n>0$. Property (1) is clear from the definition. To prove (2)
we need to show that the hyperplanes separating $x_{n}$ from its
preceding adjacent vertices are pairwise transverse. Let $s,t<n$
be distinct numbers such that $x_{s}$ and $x_{t}$ are adjacent to
$x_{n}$, and let $\hhat_{s}$ and $\hhat_{t}$ be the hyperplanes
separating $x_{n}$ from $x_{s}$ and $x_{t}$ respectively.

The hyperplane $\hhat_{s}$ separates $x_{n}$ from $x_{0}$, for
otherwise $d(x_{0},x_{s})>d(x_{0},x_{n})$. Similarly $\hhat_{t}$
separates $x_{n}$ from $x_{0}$. The hyperplane $\hhat_{s}$ does
not separate $x_{t}$ from $x_{n}$, for otherwise $x_{t}$ and $x_{n}$
will be separated by two hyperplanes, and thus will not be adjacent.
Similarly, $\hhat_{t}$ does not separate $x_{s}$ from $x_{n}$.
Hence $\hhat_{s}$ and $\hhat_{t}$ are transverse, since in each
of the four possible intersections of halfspaces corresponding to
$\hhat_{s}$ and $\hhat_{t}$ there is one of the points $x_{n},x_{s},x_{t},x_{0}$.

Let $x_{m_{1}},\ldots,x_{m_{d}}$ be all the preceding adjacent vertices
of $x_{n}$, and let $\hhat_{m_{1}},\ldots,\hhat_{m_{d}}$ be the
hyperplanes which separate $x_{n}$ from $x_{m_{1}},\ldots,x_{m_{d}}$
respectively. If $i<n$ then there exists $m$ among $m_{1},\ldots,m_{d}$
such that $\hhat_{m}$ does not separate $x_{i}$ from $x_{m}$. Hence,
$\hhat_{m}$ separates $x_{i}$ from $x_{n}$. Each hyperplane $\hhat$
which separates $x_{i}$ from $x_{m}$ must also separate $x_{i}$
from $x_{n}$. Therefore, if $x_{i}$ shares a cube, $C$, with $x_{n}$
then it cannot be separated by two non-transverse hyperplanes from
$x_{n}$. It follows that $x_{i}$ cannot be separated from $x_{m}$
by two non-intersecting hyperplanes, and thus $x_{m}\in C$.
\end{proof}

\subsection{Right-angled Coxeter groups and their Davis complex\label{sub:Right-angled-Coxeter-groups}}

We recall the construction of the Davis cube complex for the right-angled
Coxeter group associated to $L$.

We first associate to $L$ the right-angled Coxeter group $W_{L}$
given by the following presentation: 
\[
W_{L}=\left\langle \xi\in L^{(0)}|\forall\xi\in L^{(0)},\xi^{2}=1\mbox{ and }\forall\xi\sim\zeta,\left[\xi,\zeta\right]=1\right\rangle .
\]

Where $\xi\sim\zeta$ if the vertices $\xi$ and $\zeta$ are adjacent
in $L$ (note that $L$ is not the Coxeter diagram for $W_{L}$).

The Davis cube complex associated to $L$, $D(L)$, is the complex
defined by adding cubes to the Cayley graph of $W_{L}$ whenever a
$1-$skeleton of a cube appears (after identifying the bigons of the
form $\xi^{2}$ for $\xi\in L^{(0)}$). 
\begin{rem}
\label{remark on Davis complexes}Observe that the complex $D(L)$
is a 1-connected $L$-cube-complex, and the identification of $\Lk(x,D(L))$
with $L$ is canonical (the identification coming from the labeling
of the edges of the Cayley graph with the generators $L^{(0)}$).
With respect to this identification, the transfer maps along edges
of $D(L)$ are the identity maps. Also observe that any automorphism
$\Phi$ of $L$ defines an automorphism $F_{\Phi}$ of $D(L)$ which
fixes the vertex which correspond to $1\in W_{L}$ and induces the
map $\Phi$ on the link of every vertex (considered via the canonical
identification with $L$).
\end{rem}

\begin{rem}
\label{remark on Davis complexes-hyperplanes}Observe that the hyperplane
in $D(L)$ transverse to an edge $e$ labeled $\xi\in L^{(0)}$ (with
respect to the canonical labeling discussed above) is isomorphic to
$D(\Lk(\xi,L))$. This isomorphism is given by the $W_{\Lk(\xi,L)}$-equivariant
embedding $W_{\Lk(\xi,L)}\hookrightarrow\left\langle \xi\right\rangle \backslash W_{L}$.
\end{rem}

\section{Uniqueness of $L$-cube-complexes}

In this section we prove Theorem \ref{Uniqueness-of-L-cube-complexes}.
\begin{defn}
Let $\left\{ x_{n}\right\} _{n\in J}$ be an admissible enumeration
of a CAT(0) cube complex $X$. Let $Y$ be another CAT(0) cube complex.
For $n\ge0$, an $n$-admissible map between $X$ and $Y$, is a pair
$(F_{n},\left\{ F_{x_{i}}\right\} _{i\le n})$ of a map $F_{n}:\left\{ x_{i}\right\} _{i\le n}\to Y^{(0)}$,
and $F_{x_{i}}:\Lk(x_{i},X)\to\Lk(F(x_{i}),Y)$ for all $i\le n$
satisfying the following:
\begin{condition*}[compatibility condition]
For every $d$-dimensional cube $C$ in $X$ for which $C_{\le n}=\left\{ x_{i}\in C\,|\, i\le n\right\} $
is non-empty there is a combinatorial embedding $G:C\hookrightarrow Y$
such that on the set $C_{\le n}$ the map $G$ coincides with $F_{n}$.
Moreover, if $x_{i}\in C_{\le n}$ and $\sigma\in\Lk(x_{i},X)$ is
the simplex which represents $C$, then $G_{x_{i}}=F_{x_{i}}|_{\sigma}$. 
\end{condition*}
In other words, this condition states that the map $F_{n}$ extends
to cubes, and the link-maps, $F_{x_{i}}$, coincide with the link-maps
of this extension.
\end{defn}
We will prove the following useful extension lemma.
\begin{lem}
\label{lem:extension of admissible maps}Let $L$ be superstar-transitive.
Let $X,Y$ be CAT(0) cube complexes, let $\left\{ x_{n}\right\} _{n\in J}$
be an admissible enumeration of the vertices of a CAT(0) cube complex
$X$, let $n\ge0$ and let $(F_{n-1},\left\{ F_{x_{i}}\right\} _{i\le n-1})$
be an $(n-1)$-admissible map between $X$ and $Y$. If the links
of the vertices $x\in X^{(0)}\setminus\left\{ x_{i}\right\} _{i\le n-1}$
and $y\in Y^{(0)}\setminus F_{n-1}(\left\{ x_{i}\right\} _{i\le n-1})$
are isomorphic to $L$, then there exists an $n$-admissible map $(F_{n},\left\{ F_{x_{i}}\right\} _{i\le n})$
between $X$ and $Y$ that extends $\left(F_{n-1},\left\{ F_{x_{i}}\right\} _{i\le n-1}\right)$
in the obvious sense.

As a corollary we see that there exists a map $F:X\to Y$ extending
$F_{n}$.\end{lem}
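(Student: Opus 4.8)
The plan is to build the $n$-admissible map inductively, so the core of the argument is the single extension step described in the lemma: given the $(n-1)$-admissible map $(F_{n-1},\{F_{x_i}\}_{i\le n-1})$, we must define $F(x_n)$ and a link isomorphism $F_{x_n}:\Lk(x_n,X)\to\Lk(F(x_n),Y)$, and then verify that the enlarged data still satisfies the compatibility condition for every cube that meets $\{x_i\}_{i\le n}$. First I would use admissibility of the enumeration: by properties (1) and (2) of an admissible enumeration, the preceding adjacent vertices $x_{m_1},\dots,x_{m_d}$ of $x_n$ all lie in a single cube $C$, and by property (3) every earlier vertex $x_i$ that shares a cube with $x_n$ actually shares a cube with one of these $x_{m_j}$ inside that cube. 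So the relevant local picture at $x_n$ is governed entirely by the $x_{m_j}$'s and the cubes they span with $x_n$.

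Next I would define $F(x_n)$. Pick any preceding adjacent vertex, say $x_{m_1}$, with edge $e$ from $x_{m_1}$ to $x_n$; the simplex $\xi\in\Lk(x_{m_1},X)$ representing $e$ maps under $F_{x_{m_1}}$ to a vertex of $\Lk(F(x_{m_1}),Y)$, i.e. to an edge $e'$ of $Y$ out of $F(x_{m_1})$; let $F(x_n)$ be its other endpoint. By hypothesis $\Lk(F(x_n),Y)\cong L$. Now the key step: I must produce the link isomorphism $F_{x_n}$ that is \emph{simultaneously compatible} with all the cubes through $x_n$ and the earlier points in them. The compatibility condition together with the transfer maps forces the behavior of $F_{x_n}$ on $\st_{\Lk(x_n,X)}(\xi_1)$, where $\xi_1$ is the vertex of $\Lk(x_n,X)$ dual to $e$: indeed, composing the transfer map $\tau_e$ at $x_{m_1}$, the already-defined $F_{x_{m_1}}$ restricted to $\st(\xi)$, and the inverse transfer map in $Y$ gives a prescribed isomorphism $\phi:\st_{\Lk(x_n,X)}(\xi_1)\to\st_{\Lk(F(x_n),Y)}(\xi_1')$ sending $\xi_1$ to $\xi_1'$. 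Since $\Lk(x_n,X)\cong L$ is superstar-transitive (it is $\st(\Delta^0)$-transitive, the star-of-a-vertex case), $\phi$ extends to an automorphism $\Phi$ of $\Lk(x_n,X)$; set $F_{x_n}:=$ (the identification $\Lk(x_n,X)\cong L\cong\Lk(F(x_n),Y)$) $\circ\,\Phi$, chosen so that it restricts to $\phi$ on $\st(\xi_1)$.

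The main obstacle is showing this single choice is consistent with \emph{all} the preceding adjacent vertices $x_{m_j}$ at once, not just $x_{m_1}$ — i.e. that $F_{x_n}$ restricted to each $\st(\xi_j)$ agrees with the isomorphism forced by $(F_{n-1},\{F_{x_i}\})$ through $x_{m_j}$. Here is where I would lean harder on the structure: the $x_{m_j}$ all lie in the single cube $C\ni x_n$, and the $(n-1)$-admissible data on $C_{\le n-1}$ comes, by the compatibility condition, from one common combinatorial embedding $G:C\hookrightarrow Y$; so the isomorphisms forced on the various $\st(\xi_j)$ already mutually agree on the overlaps (which are the faces of the simplex in $\Lk(x_n,X)$ corresponding to $C$) because they are all restrictions of the link-maps of $G$. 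Thus one must only check that extending via superstar-transitivity from \emph{one} of the $\xi_j$ automatically matches on the others — and this in turn can be reduced to superstar-transitivity applied to the simplex $\sigma_C$ of $\Lk(x_n,X)$ representing $C$ (using $\st(\Delta^k)$-transitivity for $k=\dim C-1$), which is exactly why the full superstar-transitivity hypothesis, and not merely vertex-transitivity, is needed. Once $F_{x_n}$ is fixed, verifying the compatibility condition for cubes through $x_n$ is routine: each such cube meets $\{x_i\}_{i\le n}$ in a sub-face handled by $G$ or by $F_{x_n}$ on $\sigma_C$, and these glue.

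Finally, the corollary: iterating the lemma over all $n\in J$ (with $J$ a segment of $\mathbb{Z}$ containing $0$, so the enumeration exhausts $X^{(0)}$) yields a coherent family $\{F(x_n)\}$ together with link-maps; since every cube of $X$ has all its vertices appearing at some finite stage, the compatibility conditions assemble the vertex map into a combinatorial map $F:X\to Y$ extending each $F_n$, as claimed.
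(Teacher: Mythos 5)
Your overall strategy is the paper's: define $F(x_n)$ by pushing an edge through a preceding adjacent vertex, use admissibility (2) and (3) to reduce every compatibility check to cubes containing one of the preceding adjacent vertices $x_{m_1},\dots,x_{m_d}$, invoke superstar-transitivity to produce the link map, and get the corollary by induction. However, the order in which you build $F_{x_n}$ creates a genuine gap. You define $F_{x_n}$ by extending the forced isomorphism on $\st(\xi_1)$ alone, via $\st(\Delta^0)$-transitivity, and then assert that this extension ``automatically matches'' the forced maps on the other stars $\st(\xi_j)$, with the check ``reduced'' to $\st(\Delta^{\dim C-1})$-transitivity for $\sigma_C$. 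That assertion is false as stated: an arbitrary automorphism of $L$ extending $\phi|_{\st(\xi_1)}$ has no reason to restrict to the forced maps on $\st(\xi_2),\dots,\st(\xi_d)$, and $\st(\Delta^{\dim\sigma_C})$-transitivity cannot repair a wrong choice --- it is a statement about extendability of an isomorphism already defined on all of $\st(\sigma_C)$, not about coherence of an extension chosen from one vertex star. The correct construction (the one in the paper) is to first assemble the maps forced by $F_{x_{m_1}},\dots,F_{x_{m_d}}$ into a single isomorphism $\phi:\st(\sigma_C)\to\st(\sigma_C')$ and only then apply $\st(\Delta^{\dim\sigma_C})$-transitivity once to obtain $F_{x_n}$ with $F_{x_n}|_{\st(\sigma_C)}=\phi$.

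A second, smaller inaccuracy is in your well-definedness argument for the forced data: the overlaps $\st(\xi_j)\cap\st(\xi_{j'})$ are not just faces of $\sigma_C$; a simplex in the overlap represents a cube containing $x_n$, $x_{m_j}$, $x_{m_{j'}}$ which need not be a face of $C$ (it may involve vertices with later indices). Agreement there does not follow from the single embedding $G:C\hookrightarrow Y$, but from the compatibility condition of the $(n-1)$-admissible map applied to that cube, whose vertices of index $\le n-1$ include both $x_{m_j}$ and $x_{m_{j'}}$ (an embedding of a cube into $Y$ is determined by the image of one vertex and the link data there, so the forced values through $x_{m_j}$ and $x_{m_{j'}}$ coincide). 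With these two corrections your argument coincides with the paper's proof; the remaining verifications (cubes with $C_{\le n}=\{x_n\}$, and the inductive passage to $F:X\to Y$) are handled as you indicate.
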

\begin{proof}
Let us denote by $y_{i}=F_{n-1}(x_{i})$ for all $i<n$. $F_{n}$
extends $F_{n-1}$ thus $F_{n}(x_{i})=y_{i}$ for all $i<n$.

We first define $F(x_{n})$ as follows. Let $m<n$ be such that $x_{m}$
is adjacent to $x_{n}$. Let $\xi\in\Lk(x_{m},X)$ be the vertex corresponding
to $x_{n}$ in the link of $x_{m}$. Let $y_{n}$ be the adjacent
vertex of $y_{m}$ corresponding to $F_{x_{m}}(\xi)\in\Lk(y_{m},Y)$.
Define $F(x_{n}):=y_{n}$. Note that this does not depend on the choice
of $x_{m}$, because all other choices are contained in a cube $C$
which will be mapped to a cube of $Y$ according to the compactability
condition.

Next we define $F_{x_{n}}$ as follows. Let $x_{m_{1}},\ldots,x_{m_{d}}$
be all the adjacent vertices of $x_{n}$ such that $m_{i}<n$, $i=1,\ldots,r$.
Then $x_{m_{1}},\ldots,x_{m_{d}}$ are contained in a cube $C$, this
cube correspond to a simplex $\sigma\subset\Lk(x_{n},X)$. By the
compatability condition their images $y_{m_{1}},\ldots,y_{m_{d}}$
are adjacent to $y_{n}$ and are contained in a cube $C^{\prime}$
corresponding to a simplex $\sigma^{\prime}\subset\Lk(y_{n},Y)$.
Every simplex in $\st(\sigma)$ represents a cube in $X$ which contains
$x_{n}$ and at least one of $x_{m_{j}}$. The maps $F_{y_{m_{1}}},\ldots,F_{y_{m_{d}}}$
define an isomorphism $\phi:\st(\sigma)\to\st(\sigma^{\prime})$ which
by the compactability condition is well defined. By $\st(\Delta^{d})$-transitivity
we can extend $\phi$ to an isomorphism $F_{x_{n}}:\Lk(x_{n},X)\to\Lk(y_{n},Y)$
such that $F_{x_{n}}|_{\st(\sigma)}=\phi$. By definition, for all
$1\le j\le d$ the compatability condition is satisfied for any cube
shared by $x_{m_{j}},x_{n}$.

By admissibility, every cube $C$ which contains $x_{n}$ either contains
some adjacent vertex $x_{m_{j}}$ or $C_{\le n}=\left\{ x_{n}\right\} $.
In either case the compatability condition holds.

The corollary follows by inductively applying the above argument.
\end{proof}
We are now ready to prove Theorem \ref{Uniqueness-of-L-cube-complexes}.
\begin{proof}
We begin by proving that if $L$ is superstar-transitive then there
is a unique 1-connected $L$-cube-complex.

Let $X,Y$ be two $L$-cube-complexes. Let $\left\{ x_{n}\right\} _{n\ge0}$
be an enumeration of $X^{(0)}$ as in Lemma \ref{lem: induction on Ccc}.
Set $F(x_{0})$ to be any vertex $y_{0}$ of $Y$, and set $F_{x_{0}}:\Lk(x_{0},X)\to\Lk(y_{0},Y)$
to be an isomorphism. The pair $\left(F,\left\{ F_{x_{i}}\right\} _{0\le i\le0}\right)$
is a 0-admissible map. 

By applying Lemma \ref{lem:extension of admissible maps}, we obtain
a map $\tilde{F}:X\to Y$ between the two cube complexes. This completes
the proof of the first implication. We shall now prove that if $L$
is not superstar-transitive for all $k$ then there exist more than
one CAT(0) $L$-cube-complexes.

Assume that $k$ is the minimal non-negative integer such that $L$
is not $\st(\Delta^{k})$-transitive.

If $k=0$, let $\xi,\zeta\in L^{(0)}$ and let $\phi:\st(\xi)\to\st(\zeta)$
be an isomorphism such that there is no automorphism of $L$ extending
$\phi$. Let $X$ be the following cube complex complex. Let $\hhs_{\xi}$
(resp. $\hhs_{\zeta}$) be a half-space in $D(L)$ defined by a hyperplane
associated to $\xi$ (resp. $\zeta$). By Remark \ref{remark on Davis complexes-hyperplanes},
the hyperplane $\hhat_{\xi}$ (resp. $\hhat_{\zeta}$) can be identified
with $D(\Lk(\xi,L))$ (resp. $D(\Lk(\zeta,L))$) and thus the map
$\phi$ defines an isomorphism $F_{\phi}:\hhat_{\xi}\to\hhat_{\zeta}$
by Remark \ref{remark on Davis complexes}. Form the space $X=\hhs_{\xi}\sqcup_{F_{\phi}}\hhs_{\zeta}$,
see Figure \ref{fig:Baby case example}. The space $X$ is a 1-connected
$L$-cube-complex.

\begin{figure}
\includegraphics{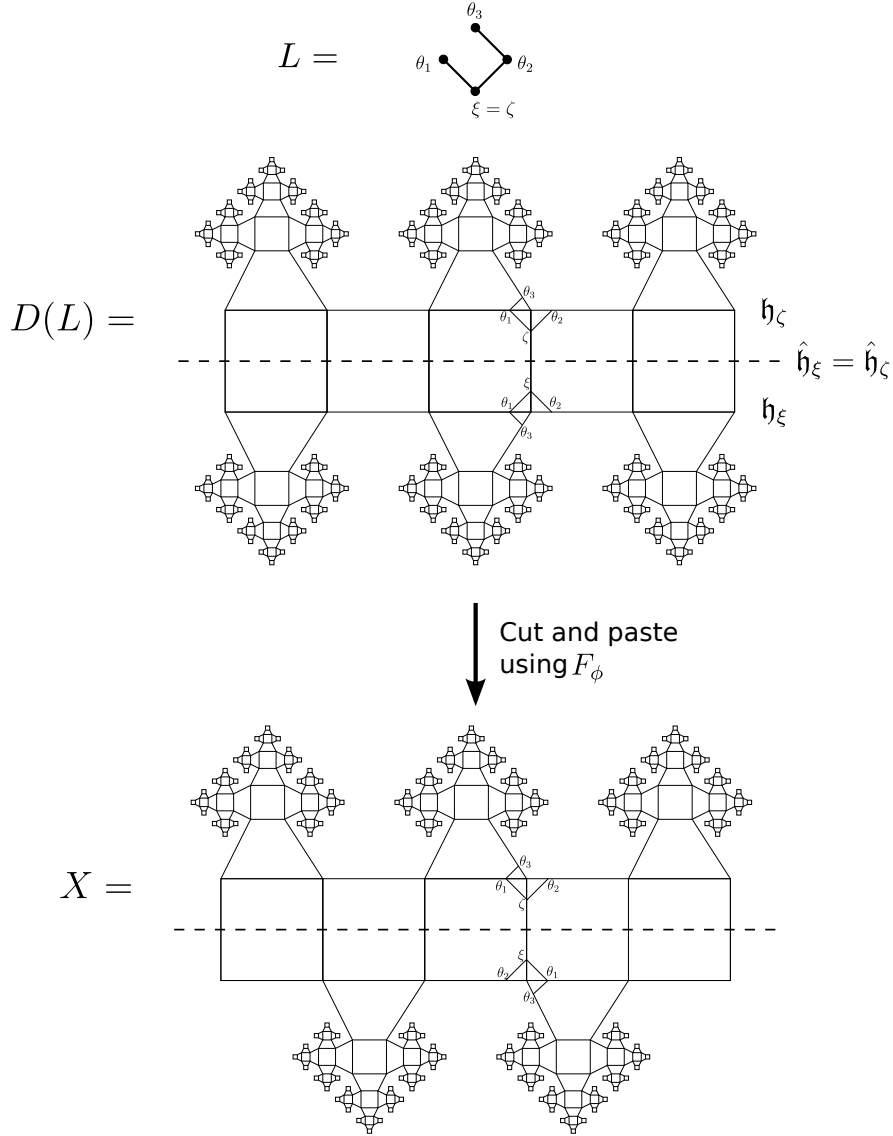}

\caption{\label{fig:Baby case example}An example of the complex $X$ for a
non-$\st(\Delta^{0})$-transitive link $L$. Let $L$ be the 3-edge
path graph. Let $\xi$ and $\zeta$ be the same vertex, shown in the
figure, and let $\phi:\st(\xi)\to\st(\zeta)$ be the non-extendible
isomorphism which exchanges $\theta_{1}$ and $\theta_{2}$. Since
$\xi=\zeta$, the hyperplanes $\hhat_{\xi}$ and $\hhat_{\zeta}$
may be chosen the same, and are isomorphic to the real line (thought
of as the Davis complex, $D(\Lk(\xi,L))$, of the infinite dihedral
group generated by the reflections $\theta_{1}$ and $\theta_{2}$).
The induced map $F_{\phi}:\hhat_{\xi}\to\hhat_{\zeta}$ is the reflection
around some vertex of $\hhat_{\xi}$. The space $X$ in the figure
is obtained by gluing the halfspaces $\hhs_{\xi}$ and $\hhs_{\zeta}$
using $F_{\phi}$.}
\end{figure}

To see that $X\ncong D(L)$ note that in $D(L)$ for each hyperplane
$\hhat$ of $D(L)$ there is a reflection fixing this hyperplane and
exchanging $\hhs$ and $\hhs^{*}$, while in $X$ the distinguished hyperplane
$\hhat_{\xi}=\hhat_{\zeta}$ does not satisfy this property since
such a reflection would imply that the induced maps on the links extend
the transfer maps, $\phi$, to an isomorphism of the links - contradicting
the assumption on $\phi$.

If $k\ge1$, let $\phi:\st(\sigma)\to\st(\sigma^{\prime})$ be an
isomorphism between the stars of the $k$-simplices $\sigma,\sigma^{\prime}\subset L$
that cannot be extended to an automorphism of $L$. Let $\sigma=\left[\xi_{0},\ldots,\xi_{k}\right]$
and let $\tilde{\sigma}=\left[\xi_{1},\ldots,\xi_{k}\right]$ be one
of its codimension-1 faces. By the minimality of $k$, the complex
$L$ is $\st(\Delta^{k-1})$-transitive, and therefore there exists
an automorphism $\tilde{\Phi}$ of $L$ extending $(\phi|_{\tilde{\sigma}})^{-1}$
(which, in particular, sends $\sigma^{\prime}$ to $\sigma$). Hence,
by post-composition with $\tilde{\Phi}$, we may assume that $\sigma=\sigma^{\prime}$,
i.e $\phi:\st(\sigma)\to\st(\sigma)$, and moreover $\phi|_{\st(\tilde{\sigma})}=\id_{\st(\tilde{\sigma})}$.
We denote $\phi_{i}=\phi|_{\st(\xi_{i})}$, which by our assumption
is the identity map for all $i>0$.

Let $\hhs_{0},\ldots,\hhs_{k}$ be a collection of halfspace in $D(L)$
which correspond to $\xi_{0},\ldots,\xi_{k}$ respectively, and which
have pairwise intersecting supporting hyperplanes. Let $\Sigma=\hhs_{0}\cap\ldots\cap\hhs_{k}$
be the sector they define. Each $\phi_{i}$ defines an automorphism
$F_{i}$ of $\hhat_{i}$. These automorphisms coincide on $\hhat_{i}\cap\hhat_{j}$
thus define an automorphism $F$ of the boundary of this sector, i.e,
\[
\partial\Sigma=\bigcup_{i=1}^{k}\left(\hhat_{i}\cap\left(\bigcap_{j\ne i}\hhs_{j}\right)\right).
\]
Let $X$ be the space obtained by gluing $\Sigma$ to $D(L)\setminus\Sigma^{\circ}$
along $\Sigma$ with respect to $F$. The space $X$ is a 1-connected
$L$-cube-complex.

Assume for contradiction that $X\cong D(L)$. Consider the $k+1$
commuting reflections, $r_{0},\ldots,r_{k}$, with respect to the
hyperplanes $\hhat_{0},\ldots,\hhat_{k}$. Let $C$ be a cube that
intersects $\bigcap_{i}\hhat_{i}$ and corresponds to $\sigma$ .
Let $C$ be identified with $\left[0,1\right]^{k+1}$ such that $v_{0}:=\left(0,\ldots,0\right)$
is the vertex in the sector $\Sigma$, and its adjacent vertices $\left(1,0,\ldots,0\right),\ldots,\left(0,\ldots,0,1\right)$
correspond to the vertices $\xi_{0},\ldots,\xi_{k}$ in the link of
$v_{0}$. 

The reflections $r_{i}$ determine automorphisms $r_{i,v}:\Lk\left(v,X\right)\to\Lk\left(r_{i}\left(v\right),X\right)$
for $i=0,\ldots,k$ and $v\in\left\{ 0,1\right\} ^{\left\{ 0,\ldots,k\right\} }$.
These reflections have the following properties:
\begin{enumerate}
\item The maps $r_{i,v}$ fix $\sigma$ (after identifying each link with
$L$ using the natural identification).
\item The restriction $r_{i,v}|_{st(\xi_{i})}=\tau_{e_{i,v}}$ where $e_{i,v}$
as the edge connecting $v$ and $r_{i}\left(v\right)$ and $\tau_{e_{i,v}}$
is the transfer map along $e_{i,v}$. Note also that by the construction
all the transfer maps $\tau_{e_{i,v}}$ are the identity maps except
for $\tau_{e_{0,v_{0}}}=\phi$ (and $\tau_{e_{0},r_{0}(v_{0})}=\phi^{-1}$)
\item For all $i,j\in\left\{ 0,\ldots,k\right\} ,i\ne j$ and $v\in\left\{ 0,1\right\} ^{\left\{ 0,\ldots,k\right\} }$
\[
r_{j,r_{i}\left(v\right)}\circ r_{i,v}=r_{i,r_{j}\left(v\right)}\circ r_{j,v}
\]

\end{enumerate}
Let us denote $\left[n\right]:=\left\{ 1,\ldots,n\right\} $ for all
$n\in\mathbb{N}$. Let $v\mapsto\tilde{v}$ denote the embedding $\left\{ 0,1\right\} ^{\left[k\right]}\hookrightarrow\left\{ 0\right\} \times\left\{ 0,1\right\} ^{\left[k\right]}\subset\left\{ 0,1\right\} ^{\left\{ 0\right\} \cup\left[k\right]}$,
and let $R_{v}:=r_{0,\tilde{v}}$ for $v\in\left\{ 0,1\right\} ^{\left[k\right]}$. 

For an injective map $\pi:\left[m\right]\to\left[k\right]$ and $v\in\left\{ 0,1\right\} ^{\left[k\right]}$
we define the automorphism $\Phi_{\pi}^{v}$ of $L$ by induction
on $m$ in the following way: 
\begin{itemize}
\item If $m=1$, $\Phi_{\pi}^{v}=R_{r_{\pi\left(m\right)}v}^{-1}\circ R_{v}$.
\item If $m>1$, $\Phi_{\pi}^{v}=\Phi_{\pi|_{\left[m-1\right]}}^{r_{\pi\left(m-1\right)}r_{\pi\left(m\right)}v}\circ\Phi_{\pi|_{\left[m-1\right]}}^{v}$
\end{itemize}
Consider the automorphism defined by $m=k,\,\pi=\id_{\left[k\right]},\, v_{0}=\left(0,\ldots,0\right)\in\left\{ 0,1\right\} ^{\left[k\right]}$
\[
\Phi:=\Phi_{\id}^{v_{0}}
\]

We complete the proof by contradicting the assumption that $\phi$
is not extendible, using the following claim.
\begin{claim*}
The restriction $\Phi|_{\st(\sigma)}$ is $\phi$.\end{claim*}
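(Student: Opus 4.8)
The plan is to track the restriction of $\Phi_\pi^v$ to $\st(\sigma)$ by induction on $m$, using the three listed properties of the reflections $r_{i,v}$. The key observation is that each $R_v = r_{0,\tilde v}$ fixes $\sigma$ pointwise and restricts on $\st(\xi_0)$ to the transfer map $\tau_{e_{0,\tilde v}}$, which by property (2) is $\phi$ when $\tilde v = v_0$, is $\phi^{-1}$ when $\tilde v = r_0(v_0)$, and is the identity otherwise; and on $\st(\xi_i)$ for $i>0$ every $r_{i,v}$ restricts to the identity transfer map. So on each $\st(\xi_i)$ the automorphisms in sight behave very rigidly, and since $\st(\sigma) = \bigcap_i \st(\xi_i)$ (intersecting inside $L$), it suffices to understand the behaviour on each star of a vertex of $\sigma$ separately. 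First I would record the base case: $\Phi_\pi^v|_{\st(\xi_0)} = (R_{r_{\pi(1)}v})^{-1}|_{\st(\xi_0)} \circ R_v|_{\st(\xi_0)}$, and since applying $r_{\pi(1)}$ (with $\pi(1)\in[k]$, so $\pi(1)\neq 0$) changes neither $\tilde v$ being equal to $v_0$ nor equal to $r_0(v_0)$ in a way that matters -- more precisely, among all the vertices $\tilde v$ occurring, only $\tilde v = v_0$ gives a nontrivial transfer map on $\st(\xi_0)$ -- one gets that $\Phi_\pi^v|_{\st(\xi_0)}$ equals $\phi$ if exactly one of $v, r_{\pi(1)}v$ is $v_0$ (which happens iff $v \in \{v_0, r_{\pi(1)}v_0\}$), and the identity otherwise. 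On $\st(\xi_i)$, $i>0$, every factor restricts to the identity, so $\Phi_\pi^v|_{\st(\xi_i)} = \id$.

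Next I would run the inductive step. Writing $\Phi_\pi^v = \Phi_{\pi|_{[m-1]}}^{r_{\pi(m-1)}r_{\pi(m)}v} \circ \Phi_{\pi|_{[m-1]}}^{v}$, restrict to $\st(\xi_0)$: by induction each factor restricts to either $\phi^{\pm 1}$ or $\id$ on $\st(\xi_0)$, depending on whether the superscript vertex lies in the relevant two-element set, call it $S_{m-1} \subset \{0,1\}^{[k]}$. The combinatorial heart is then to check that, as $m$ increases, the superscript $v_0 = (0,\dots,0)$ flows through these two-element sets so that at stage $m=k$, $\pi=\id$, exactly one of the $2^{k-1}$-many elementary factors $R_w^{-1}\circ R_w'$ contributing to $\Phi_{\id}^{v_0}|_{\st(\xi_0)}$ is nontrivial -- namely the unique one whose defining edge is $e_{0,v_0}$ -- so the product telescopes to a single $\phi$. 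Property (3) is what guarantees that the order in which the reflections are composed does not matter, so the product is genuinely well-defined and the bookkeeping of "which factor is nontrivial" is consistent; property (1) guarantees $\Phi|_\sigma = \id_\sigma$ throughout, so everything really does take place inside $\st(\sigma)$ and the extension of $\phi|_\sigma = \id_\sigma$ is to all of $\st(\sigma)$. On each $\st(\xi_i)$ with $i>0$ the same telescoping gives the identity, and assembling over all $i$ yields $\Phi|_{\st(\sigma)} = \phi$.

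The main obstacle I anticipate is the indexing in the inductive step: keeping straight how the two-element "active set" of superscripts evolves under the recursion $\Phi_\pi^v \mapsto (\Phi_{\pi|_{[m-1]}}^{r_{\pi(m-1)}r_{\pi(m)}v}, \Phi_{\pi|_{[m-1]}}^{v})$ and verifying that exactly one among the $2^{k-1}$ leaves of the resulting binary recursion tree has superscript $v_0$ and first-index $0$, hence contributes $\phi$, while all others contribute $\id$. I would manage this by introducing, for each node of the recursion tree, the pair (current partial permutation, current superscript vertex), noting that the superscripts appearing are exactly $\{wv_0 : w \in \langle r_{\pi(1)},\dots,r_{\pi(m)}\rangle\}$ and that $v_0$ appears as a leaf superscript with the correct multiplicity by a parity/counting argument; property (3) (the commuting relation) is precisely what makes this multiplicity well-defined independent of the chosen order. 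Once this combinatorial claim is in place, the identification $\Phi|_{\st(\sigma)} = \phi$ follows by reading off the restrictions to each $\st(\xi_i)$ and using $\st(\sigma) = \bigcap_{i=0}^k \st(\xi_i)$.
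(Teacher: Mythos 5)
There is a genuine gap, and it sits exactly where the real work of the paper's proof lies. Your argument on $\st(\xi_0)$ is fine and matches the paper: in the expanded product $\Phi=R_{v_{2^k-1}}^{-1}\circ\dots\circ R_{v_1}^{-1}\circ R_{v_0}$ each factor preserves $\st(\xi_0)$ (property (1)), only the factor $R_{v_0}$ is nontrivial there (property (2)), and the product restricts to $\phi_0$. But your treatment of $\st(\xi_i)$ for $i>0$ rests on the assertion that ``every factor restricts to the identity on $\st(\xi_i)$,'' and this is not given by anything in the setup. Property (2) only identifies $r_{j,v}|_{\st(\xi_j)}$ -- the reflection in direction $j$ restricted to the star of the vertex $\xi_j$ corresponding to the flipped edge. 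The factors of $\Phi$ are the maps $R_w=r_{0,\tilde w}$, reflections in direction $0$, and their restrictions to $\st(\xi_i)$ with $i\neq 0$ are a priori arbitrary automorphisms of $\st(\xi_i)$ fixing $\sigma$; nothing forces them to be transfer maps, let alone the identity. (Indeed, if that assumption were available, the single map $r_{0,\tilde v_0}$ would already restrict to $\phi$ on $\st(\sigma)$ and the whole recursive construction of $\Phi_\pi^v$ would be pointless.) Showing that the particular product $\Phi_{\id}^{v_0}$ nevertheless restricts to the identity on each $\st(\xi_i)$, $i\in[k]$, is precisely the content of the paper's induction: the base case uses the commutation relation (property (3)), together with the triviality of the direction-$i$ transfer maps, to prove that $R_{r_i(v)}|_{\st(\xi_i)}=R_v|_{\st(\xi_i)}$ -- equal, but not necessarily trivial -- so that the pair $R_{r_i(v)}^{-1}\circ R_v$ cancels on $\st(\xi_i)$; the inductive step (Cases 1--3) uses conjugation manipulations and a reordering of $\pi$ to reduce to that situation. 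Your invocation of property (3) as merely guaranteeing that ``the order of composition does not matter'' misses this: the product is defined in a fixed order, and (3) is the substantive algebraic input making the cancellation on $\st(\xi_i)$ possible.

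Two smaller points: $\st(\sigma)$ is the \emph{union} $\bigcup_{i=0}^k\st(\xi_i)$, not the intersection (a simplex lies in $\st(\sigma)$ as soon as its closure meets $\sigma$, i.e.\ contains some $\xi_i$); the union is what you need in order to conclude $\Phi|_{\st(\sigma)}=\phi$ from the restrictions to the individual stars. Also, in your base case the value on $\st(\xi_0)$ is $\phi_0^{-1}$, not $\phi_0$, when $r_{\pi(1)}v=v_0$ rather than $v=v_0$; this bookkeeping is harmless for the final statement (where $v_0$ enters as the rightmost factor with exponent $+1$) but should be kept straight.
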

\begin{proof}
After expanding $\Phi$ using the inductive definition we get 
\[
\Phi=R_{v_{2^{k}-1}}^{-1}\circ\ldots\circ R_{v_{1}}^{-1}\circ R_{v_{0}}
\]

Where $\left\{ v_{i}\right\} _{i=0}^{2^{k}-1}=\left\{ 0,1\right\} ^{\left[k\right]}$
(in fact, the sequence $\left\{ v_{i}\right\} _{i=0}^{2^{k}-1}$ form
a Hamiltonian cycle of the 1-skeleton of the cube $\left[0,1\right]^{\left[k\right]}$).
Thus, using the second property of the maps $r_{i,v}$ and the construction
of $X$, we get
\[
\Phi|_{\st(\xi_{0})}=R_{v_{2^{k}-1}}^{-1}|_{\st(\xi_{0})}\circ\ldots\circ R_{v_{1}}^{-1}|_{\st(\xi_{0})}\circ R_{v_{0}}|_{\st(\xi_{0})}=\id_{\st\left(\xi_{0}\right)}\circ\ldots\circ\id_{\st\left(\xi_{0}\right)}\circ\phi_{0}=\phi_{0}
\]
.

We are left to prove that $\Phi|_{\st(\xi_{i})}=\phi_{i}=\id_{\st\left(\xi_{i}\right)}$
for all $i\in\left[k\right]$. We do so by proving by induction on
$m$ that for all injective maps $\pi:\left[m\right]\to\left[k\right]$,
for all $v\in\left\{ 0,1\right\} ^{\left[k\right]}$ and for all $i\in\pi\left(\left[m\right]\right)$
we have $\Phi_{\pi}^{v}|_{\st\left(\xi_{i}\right)}=\id_{\st\left(\xi_{i}\right)}$.
In particular, we get $\Phi|_{\st(\xi_{i})}=\id_{\st\left(\xi_{i}\right)}$
for all $i\in\left[k\right]$.

For the base case, $m=1$, let $i=\pi\left(1\right)$. Property 3
provides the following relation
\[
R_{r_{i}\left(v\right)}\circ r_{i,\tilde{v}}=r_{0,r_{i}\left(v\right)}\circ r_{i,\tilde{v}}=r_{i,r_{0}\left(\tilde{v}\right)}\circ r_{0,\tilde{v}}=r_{i,r_{0}\left(\tilde{v}\right)}\circ R_{v}
\]

When restricted to $\st\left(\xi_{i}\right)$ we obtain 
\[
R_{r_{i}\left(v\right)}|_{\st\left(\xi_{i}\right)}=R_{r_{i}\left(v\right)}|_{\st\left(\xi_{i}\right)}\circ r_{i,\tilde{v}}|_{\st\left(\xi_{i}\right)}=r_{i,r_{0}\left(\tilde{v}\right)}|_{\st\left(\xi_{i}\right)}\circ R_{v}|_{\st\left(\xi_{i}\right)}=R_{v}|_{\st\left(\xi_{i}\right)}
\]

since $r_{1,\iota_{0}^{0}(\emptyset)}|_{\st\left(\xi_{1}\right)}=r_{1,\iota_{0}^{1}(\emptyset)}|_{\st\left(\xi_{1}\right)}=\id_{\st\left(\xi_{1}\right)}$
by property 2. Thus, $\Phi_{\pi}^{v}|_{\st\left(\xi_{i}\right)}=R_{r_{i}\left(v\right)}^{-1}|_{\st\left(\xi_{i}\right)}\circ R_{v}|_{\st\left(\xi_{i}\right)}=\id_{\st\left(\xi_{i}\right)}$.

Now assume $m>1$. We divide the proof of the inductive step into
3 cases:

Case 1. If $i\in\pi\left(\left[m-1\right]\right)$, then by the induction
hypothesis 
\[
\Phi_{\pi}^{v}|_{\st\left(\xi_{i}\right)}=\Phi_{\pi|_{\left[m-1\right]}}^{r_{\pi\left(m-1\right)}r_{\pi\left(m\right)}v}|_{\st\left(\xi_{i}\right)}\circ\Phi_{\pi|_{\left[m-1\right]}}^{v}|_{\st\left(\xi_{i}\right)}=\id_{\st(\xi_{i})}
\]

Case 2. If $i=\pi\left(m\right)$ and $m>2$,

\begin{eqnarray*}
\Phi_{\pi}^{v} & = & \Phi_{\pi|_{\left[m-1\right]}}^{r_{\pi\left(m-1\right)}r_{\pi\left(m\right)}v}\circ\Phi_{\pi|_{\left[m-1\right]}}^{v}\\
 & = & \left(\Phi_{\pi|_{\left[m-2\right]}}^{r_{\pi\left(m-2\right)}r_{\pi\left(m\right)}v}\circ\Phi_{\pi|_{\left[m-2\right]}}^{r_{\pi\left(m-1\right)}r_{\pi\left(m\right)}v}\right)\circ\left(\Phi_{\pi|_{\left[m-2\right]}}^{r_{\pi\left(m-2\right)}r_{\pi\left(m-1\right)}v}\circ\Phi_{\pi|_{\left[m-2\right]}}^{v}\right)
\end{eqnarray*}
If we denote $J:=\Phi_{\pi|_{\left[m-2\right]}}^{r_{\pi\left(m-2\right)}r_{\pi\left(m\right)}v}$,
then
\begin{eqnarray*}
\Phi_{\pi}^{v} & = & J\left(\left(\Phi_{\pi|_{\left[m-2\right]}}^{r_{\pi\left(m-1\right)}r_{\pi\left(m\right)}v}\circ\Phi_{\pi|_{\left[m-2\right]}}^{r_{\pi\left(m-2\right)}r_{\pi\left(m-1\right)}v}\right)\circ\left(\Phi_{\pi|_{\left[m-2\right]}}^{v}\circ\Phi_{\pi|_{\left[m-2\right]}}^{r_{\pi\left(m-2\right)}r_{\pi\left(m\right)}v}\right)\right)J^{-1}
\end{eqnarray*}

Let $\pi^{\prime}:\left[m\right]\to\left[k\right]$ be the injective
map defined by 
\[
\pi^{\prime}\left(j\right)=\begin{cases}
\pi(j) & j\le m-2\\
\pi\left(m\right) & j=m-1\\
\pi\left(m-1\right) & j=m
\end{cases}
\]
Then,
\begin{eqnarray*}
\Phi_{\pi}^{v} & = & J\left(\Phi_{\pi^{\prime}|_{\left[m-1\right]}}^{r_{\pi\left(m-2\right)}r_{\pi\left(m-1\right)}v}\Phi_{\pi^{\prime}|_{\left[m-1\right]}}^{r_{\pi\left(m-2\right)}r_{\pi\left(m\right)}v}\right)J^{-1}\\
 & = & J\left(\Phi_{\pi^{\prime}}^{r_{\pi\left(m-2\right)}r_{\pi\left(m\right)}v}\right)J^{-1}
\end{eqnarray*}

Since now $i=\pi\left(m\right)\in\pi^{\prime}\left(\left[m-1\right]\right)$,
we can deduce from the previous case that $\Phi_{\pi^{\prime}}^{r_{\pi\left(m-2\right)}r_{\pi\left(m\right)}v}|_{\st(\xi_{i})}=\id_{\st\left(\xi_{i}\right)}$
and since $\Phi_{\pi}^{v}$ and $\Phi_{\pi^{\prime}}^{r_{\pi\left(m-2\right)}r_{\pi\left(m\right)}v}$
are conjugates we get $\Phi_{\pi}^{v}|_{\st(\xi_{i})}=\id_{\st\left(\xi_{i}\right)}$.

Case 3. If $i=\pi\left(m\right)$ and $m=2$,

\begin{eqnarray*}
\Phi_{\pi}^{v} & = & \Phi_{\pi|_{\left[m-1\right]}}^{r_{\pi\left(m-1\right)}r_{\pi\left(m\right)}v}\circ\Phi_{\pi|_{\left[m-1\right]}}^{v}\\
 & = & \left(R_{r_{\pi\left(m\right)}v}^{-1}\circ R_{r_{\pi\left(m-1\right)}r_{\pi\left(m\right)}v}\right)\circ\left(R_{r_{\pi\left(m-1\right)}v}^{-1}\circ R_{v}\right)\\
 & = & J\left(\left(R_{r_{\pi\left(m-1\right)}r_{\pi\left(m\right)}v}\circ R_{r_{\pi\left(m-1\right)}v}^{-1}\right)\circ\left(R_{v}\circ R_{r_{\pi\left(m\right)}v}^{-1}\right)\right)J^{-1}
\end{eqnarray*}

When $J=R_{r_{\pi\left(m\right)}v}^{-1}$, the proof proceeds similarly
to the proof of Case 2 and the proof for $m=1$.
\end{proof}
\end{proof}

\section{Hyperplane automorphism extension property}

Recall from the introduction the following.
\begin{defn}
Let $X$ be a CAT(0) cube complex. A set of pairwise transverse hyperplanes
$\hhat_{1},\ldots,\hhat_{d}$ satisfy the \emph{hyperplane automorphism
extension property} (HAEP) if for all $\hat{f}\in\Aut\left(\hhat_{1}\cup\ldots\cup\hhat_{d}\right)$
there exists an automorphism $f\in\Aut X$ such that $f$ stabilizes
$\hhat_{1}\cup\ldots\cup\hhat_{d}$ and $f|_{\hhat_{1}\cup\ldots\cup\hhat_{d}}=\hat{f}$.\end{defn}
\begin{prop}
\label{prop: Hyperplane Automorphism Extension Property}Let $L$
be superstar-transitive, and let $X$ be the unique $L$-cube-complex.
Every transverse set of hyperplanes $\hhat_{1},\ldots,\hhat_{d}$
in $X$ satisfy the HAEP.\end{prop}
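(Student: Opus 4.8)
The plan is to extend $\hat f$ outwards from the hyperplanes, using the extension lemma (Lemma~\ref{lem:extension of admissible maps}) for the last step. Recall that $X=D(L)$ by Theorem~\ref{Uniqueness-of-L-cube-complexes}, and that superstar-transitivity passes to vertex links of $L$: given an isomorphism of stars in $\Lk(\xi,L)$, cone it off keeping $\xi$ fixed to get an isomorphism of stars of $L$, extend it to $\Aut L$ by superstar-transitivity of $L$, and restrict; since $\Phi(\xi)=\xi$ for the resulting $\Phi$, it restricts to the desired automorphism of $\Lk(\xi,L)$. Hence each $\hhat_i\cong D(\Lk(\xi_i,L))$ is again a unique cube complex.

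I would begin with the geometric input. Being pairwise transverse, $\hhat_1,\dots,\hhat_d$ have a common crossing (Helly for hyperplanes of a CAT(0) cube complex), so in particular $\hhat:=\hhat_1\cup\dots\cup\hhat_d$ is connected; and $\hat f$ permutes $\hhat_1,\dots,\hhat_d$ (the decomposition of $\hhat$ into these hyperplane components is intrinsic to $\hhat$), say $\hat f(\hhat_i)=\hhat_{\pi(i)}$. Next, extend $\hat f$ over the union of carriers $\overline N:=\bigcup_i\overline{N(\hhat_i)}$. Since $\overline{N(\hhat_i)}\cong\hhat_i\times[0,1]$, the map $\hat f|_{\hhat_i}\times\id$ is an automorphism of $\overline{N(\hhat_i)}$ restricting to $\hat f$ on $\hhat_i$; one checks these agree on the pairwise overlaps $\overline{N(\hhat_i)}\cap\overline{N(\hhat_j)}$ (the carrier of the transverse intersection $\hhat_i\cap\hhat_j$), so together they give $\bar f\in\Aut(\overline N)$ with $\bar f|_{\hhat}=\hat f$. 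Note that $\overline N$ is again a CAT(0) cube complex — its links are flag, being obtained by gluing flag links along flag links without creating new edges — though it is not convex in $X$.

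It remains to extend $\bar f$ from $\overline N$ to all of $X$. Since every vertex of $X$ has link $L$, the hypotheses of Lemma~\ref{lem:extension of admissible maps} (with $Y=X$) are automatically met. I would fix an admissible enumeration of $X^{(0)}$ listing the vertices of $\overline N$ first and then the remaining vertices by non-decreasing distance from $\overline N$ (admissibility away from $\overline N$ is verified exactly as in Lemma~\ref{lem: induction on Ccc}), build a $0$-admissible self-map of $X$ supported on $\overline N^{(0)}$ that realizes $\bar f$, and apply the extension lemma to obtain a map $f\colon X\to X$. Because $f$ induces an isomorphism on every vertex link it is a local isomorphism of CAT(0) cube complexes, hence a covering, hence — both complexes being connected and simply connected — an automorphism. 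By construction $f|_{\overline N}=\bar f$, so $f$ stabilizes $\overline N\supseteq\hhat$ and $f|_{\hhat}=\hat f$, as required.

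The crux — and the only genuinely delicate point — is the construction of that $0$-admissible map on $\overline N^{(0)}$. At a vertex $v\in\overline N^{(0)}$ the automorphism $\bar f$ already prescribes the link map on $\Lk(v,\overline N)=\bigcup_{i:\,v\in N(\hhat_i)}\st_{\Lk(v,X)}(\eta_i^{(v)})$, where $\eta_i^{(v)}$ is the direction of the edge at $v$ dual to $\hhat_i$; since the hyperplanes through $v$ are pairwise transverse, the $\eta_i^{(v)}$ span a simplex of $L$. One must promote this prescription to an isomorphism of the whole link $L$, and do so consistently over all of $\overline N^{(0)}$ so that the compatibility condition holds — i.e.\ so that the chosen link maps match a genuine combinatorial embedding of every cube of $X$ meeting $\overline N^{(0)}$, including those cubes that leave $\overline N$. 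This is exactly where superstar-transitivity of $L$ enters, together with the fact just noted that the transverse hypothesis forces the prescribed directions to span a simplex: one processes the vertices of $\overline N$ in the chosen order and, at each step, reconciles the prescription of $\bar f$ with the constraints imposed by already-treated cubes and extends it, via superstar-transitivity, to a link map $L\to L$. I expect this bookkeeping to be the bulk of the proof; everything else follows from Lemmas~\ref{lem: induction on Ccc} and~\ref{lem:extension of admissible maps}.
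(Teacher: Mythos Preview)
Your approach is in the right spirit—push $\hat f$ outward using Lemma~\ref{lem:extension of admissible maps}—but it differs from the paper's in a way that creates work the paper simply avoids. The paper does not stay in $X$ and does not extend $\hat f$ to the union of carriers $\overline N$. Instead it \emph{cuts} $X$ along $\hhat_1,\dots,\hhat_d$ into the $2^d$ closed sectors $Z$; each $Z$ is a CAT(0) cube complex whose boundary vertices are the midcube vertices on $\hhat_1\cup\dots\cup\hhat_d$. On those boundary vertices $\hat f|_Z$ together with its induced link maps is already a $0$-admissible map from $Z$ to the appropriate target sector—no promotion to a full link isomorphism $L\to L$ is needed, because the boundary vertices of $Z$ have \emph{smaller} links than $L$, and those smaller links are precisely what $\hat f$ prescribes. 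One then enumerates the remaining vertices of $Z$ by non-decreasing distance from a single corner vertex $x_0\in\hhat_1\cap\dots\cap\hhat_d$, so that Lemma~\ref{lem: induction on Ccc} applies verbatim, invokes Lemma~\ref{lem:extension of admissible maps} once, and glues the sector maps along their common boundaries. The ``delicate bookkeeping'' you anticipate—choosing full link maps at every vertex of $\overline N$ consistently across all cubes of $X$ meeting $\overline N$—never arises.

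Your route, by contrast, contains two concrete gaps. First, the carrier extension ``$\hat f|_{\hhat_i}\times\id$'' is not well-defined: when $\hat f$ permutes the $\hhat_i$ there is no canonical identification of the transverse $[0,1]$ factors, and for $d\ge2$ the correct map on that factor is forced by how $\hat f$ moves the halfspaces of the \emph{other} hyperplanes inside $\hhat_i$, which need not be the identity; so ``one checks these agree on the pairwise overlaps'' in fact fails for your stated formula. Second, your proposed enumeration—$\overline N^{(0)}$ first, then the rest by non-decreasing distance from $\overline N$—is \emph{not} admissible in general, and Lemma~\ref{lem: induction on Ccc} does not verify it. Already for $L=C_4$, $X=\mathbb R^2$, with $\hhat_1=\{x=\tfrac12\}$ and $\hhat_2=\{y=\tfrac12\}$, the vertices $(2,2),(2,3),(2,4)$ all lie at distance $1$ from $\overline N$; if ties are broken so that $(2,2)$ and $(2,4)$ both precede $(2,3)$, the preceding neighbours $(1,3),(2,2),(2,4)$ of $(2,3)$ do not lie in a common square, violating condition~(2). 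The fix in both cases is, essentially, the paper's sector decomposition: work in one sector at a time and measure distance from a single basepoint rather than from the non-convex set $\overline N$.
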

\begin{proof}
We first cut $X$ along $\hhat_{1},\ldots,\hhat_{d}$. Let $Z$ be
a component of the resulting complex. Let $x_{0}$ be a vertex of
$\hhat_{1}\cap\ldots\cap\hhat_{d}$ in $Z$. Let $\left\{ x_{n}\right\} _{n\le0}$
be some enumeration of the vertices of $\hhat_{1}\cup\ldots\cup\hhat_{d}$
in $Z$. Let $\left\{ x_{n}\right\} _{n>0}$ be an enumeration of
the remaining vertices of $Z$ with non-decreasing distance from $x_{0}$.
Similarly to Lemma \ref{lem: induction on Ccc} the enumeration $\left\{ x_{n}\right\} _{n}$
is admissible.

The map $\hat{f}|_{Z}$ defines a $0$-admissible map $(\hat{f}|_{Z},\{\left(\hat{f}|_{Z}\right)_{x_{i}}\}_{i\le0})$.
By Lemma \ref{lem:extension of admissible maps}, one can extend the
0-admissible map $(\hat{f}|_{Z},\{\left(\hat{f}|_{Z}\right)_{x_{i}}\}_{i\le0})$
to an automorphism $f_{Z}$ of $Z$. 

Finally, let $f$ be the automorphism of $X$ whose restriction to
each sector, $Z$, is $f_{Z}$.
\end{proof}
Recall from \cite{HaPa98} that for a group $G$ of automorphisms
of a CAT(0) cube complex $X$ we denote by $G^{+}$ the subgroup of
$G$ generated by all the elements that fix some halfspace of $X$,
i.e $G^{+}=\left\langle \Fix_{G}(\hhs)\,|\,\mbox{\ensuremath{\hhs}\ a halfspace of \ensuremath{X}}\right\rangle $.
\begin{lem}
\label{cor:extension of Aut+}Let $\hhat,\khat$ be a transverse pair
of hyperplanes which satisfy the HAEP, then each automorphism $f$
of $\hhat$ which fixes the halfspace $\khs\cap\hhat$ of $\hhat$ can
be extended to an automorphism $F$ of $X$ which fixes $\khs$.

In particular, if the hyperplane $\hhat$ satisfies the HAEP for any
transverse hyperplane $\khat$ then any element of $\Aut^{+}\hhat$
can be extended to an element of $\Aut^{+}X$.\end{lem}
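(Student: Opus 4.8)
The plan is to prove the two assertions of the lemma in turn, using the HAEP as a black box together with the structure of halfspaces determined by a transverse pair of hyperplanes.

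First I would fix the transverse pair $\hhat,\khat$. Since they are transverse, the hyperplane $\khat$ meets $\hhat$ in a hyperplane $\khat\cap\hhat$ of $\hhat$, and the two halfspaces of $\hhat$ determined by it are exactly $\khs\cap\hhat$ and $\khs^{*}\cap\hhat$. Given an automorphism $f$ of $\hhat$ fixing the halfspace $\khs\cap\hhat$ pointwise, the first step is simply to invoke the HAEP for the pair $\{\hhat,\khat\}$: there is an automorphism $F\in\Aut X$ stabilizing $\hhat\cup\khat$ with $F|_{\hhat\cup\khat}$ equal to the given automorphism $\hat f$ of $\hhat\cup\khat$. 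Here $\hat f$ is the automorphism of $\hhat\cup\khat$ which restricts to $f$ on $\hhat$ and to the identity on $\khat$; one must check these are compatible on the overlap $\hhat\cap\khat$, which is immediate since $f$ fixes $\khat\cap\hhat\subset\khs\cap\hhat$ pointwise. The main point then is to see that $F$ fixes the halfspace $\khs$ (not merely stabilizes $\khat$). Because $F$ restricts to the identity on $\khat$, it cannot swap the two halfspaces $\khs,\khs^{*}$ of $X$: a hyperplane together with one of its halfspaces is determined by which side a vertex adjacent to $\khat$ lies on, and $F$ fixes $\khat$ and at least one such adjacent edge's endpoint can be traced — more cleanly, $F$ preserves the "side of $\khat$" function since it is the identity on $\khat$ and $X$ is connected, so it cannot be the reflection-type map interchanging the sides. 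Hence $F(\khs)=\khs$. This gives the first statement.

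For the second ("in particular") statement, suppose $\hhat$ satisfies the HAEP relative to every transverse hyperplane $\khat$, and let $g\in\Aut^{+}\hhat$. By definition of the $+$-subgroup (recalled from \cite{HaPa98} just above), $g$ is a product $g=g_{1}\cdots g_{r}$ where each $g_{j}$ fixes some halfspace of $\hhat$. Every halfspace of $\hhat$ has the form $\khs\cap\hhat$ for some hyperplane $\khat$ of $X$ transverse to $\hhat$ (halfspaces of $\hhat$ are cut out by hyperplanes of $\hhat$, and every hyperplane of $\hhat$ is the trace $\khat\cap\hhat$ of a hyperplane $\khat$ of $X$ transverse to $\hhat$; this is a standard fact about hyperplanes in cube complexes, see \cite{Sag95}). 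So for each $j$ the first part of the lemma produces $F_{j}\in\Aut X$ with $F_{j}|_{\hhat}=g_{j}$ and $F_{j}(\khs_{j})=\khs_{j}$ for the corresponding halfspace $\khs_{j}$ of $X$; in particular $F_{j}\in\Aut^{+}X$. Then $F:=F_{1}\cdots F_{r}\in\Aut^{+}X$ stabilizes $\hhat$ and restricts to $g_{1}\cdots g_{r}=g$ on $\hhat$, which is what we want.

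The step I expect to require the most care is verifying that $F$ fixes the halfspace $\khs$ rather than merely stabilizing $\khat$ — i.e., ruling out that the HAEP-produced extension acts as the $\khat$-reflection. The clean argument is: an automorphism of $X$ stabilizing a hyperplane $\khat$ either preserves both halfspaces or swaps them, and whether it swaps them is detected already by its action on any edge dual to $\khat$; since $F$ is the identity on $\khat$ it fixes every such edge's "midpoint," and a short check (or the observation that $F$ fixes an endpoint of some edge dual to $\khat$, using that $\hhat$ is transverse to $\khat$ so $\hhat$ contains such an edge's... ) — actually the simplest route is: $F|_{\hhat}=f$ fixes the halfspace $\khs\cap\hhat$ of $\hhat$, so $F$ maps some vertex of $\hhat$ on the $\khs$-side of $\khat$ to a vertex on the $\khs$-side; since $F(\khat)=\khat$, this forces $F(\khs)=\khs$. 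I would present it in that last form, as it uses only the given hypothesis on $f$ and needs no case analysis.
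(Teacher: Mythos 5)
There is a genuine gap in your first part, and it propagates to the second. In this paper ``fixes the halfspace'' means fixes it pointwise: $\Aut^{+}X$ is by definition generated by the pointwise fixators $\Fix_{\Aut X}(\khs)$ of halfspaces, and the hypothesis on $f$ is likewise that $f\in\Fix_{\Aut\hhat}(\khs\cap\hhat)$ (you read the hypothesis this way yourself). But the extension you output is just the HAEP-extension $f'$ of $\hat f$, and all you verify about it is the setwise equality $f'(\khs)=\khs$. The HAEP gives no control over $f'$ away from $\hhat\cup\khat$, so $f'$ may act nontrivially deep inside $\khs$; hence $f'$ need not lie in $\Fix_{\Aut X}(\khs)$. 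Consequently, in your second part the inference ``$F_{j}(\khs_{j})=\khs_{j}$, in particular $F_{j}\in\Aut^{+}X$'' is unjustified: setwise stabilizing a halfspace does not place an automorphism in the group generated by halfspace fixators, so the ``in particular'' statement is not proved.

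The missing step, which is how the paper concludes, is to modify $f'$ after extending: define $F$ by $F|_{\khs}=\id_{\khs}$ and $F|_{\khs^{*}}=f'|_{\khs^{*}}$. This splicing is well defined exactly because $f'$ is the identity on $\khat$ and preserves the two sides of $\khat$ --- your transversality argument (that $f'|_{\hhat}=f$ fixes the nonempty set $\khs\cap\hhat$, forcing $f'(\khs)=\khs$) is precisely what justifies it, so that part of your work is needed, just deployed for a different purpose. Moreover $F$ still extends $f$, since $f$ is the identity on $\khs\cap\hhat$, so replacing $f'$ by the identity on $\khs$ does not change the restriction to $\hhat$. With this correction $F\in\Fix_{\Aut X}(\khs)\subset\Aut^{+}X$, and your product argument for lifting a general element of $\Aut^{+}\hhat$ (writing it as a product of fixators of halfspaces $\khs_{j}\cap\hhat$ induced by hyperplanes $\khat_{j}$ of $X$ transverse to $\hhat$) then goes through unchanged.
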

\begin{proof}
Let $\hat{f}\in\Aut(\hhat\cup\khat)$ be the automorphism defined
by $\hat{f}|_{\hhat}=f,\,\hat{f}|_{\khat}=\id_{\khat}$. By the HAEP,
we can extend $\hat{f}$ to an automorphism, $f^{\prime}$, of $X$.
Finally, define $f$ to be the automorphism defined by $f|_{\khs^{*}}=f^{\prime}|_{\khs^{*}},\, f|_{\khs}=\id_{\khs}$.
\end{proof}

\section{Virtual simplicity of automorphism groups}
\begin{lem}
Let $G$ act transitively on a set $S$, and let $H$ be a subgroup
of $G$. If $S/H$ is finite and for some $x\in S$, $H_{x}=\Stab_{H}(x)$
has finite index in $G_{x}=\Stab_{G}(x)$, then $H$ has finite index
in $G$.\end{lem}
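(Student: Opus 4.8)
The statement to prove is a standard index-counting fact: if $G$ acts transitively on $S$, $H \le G$ has finitely many orbits on $S$, and $H_x$ has finite index in $G_x$ for some $x \in S$, then $[G:H] < \infty$. Here is how I would attack it.

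\medskip

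The plan is to count cosets by slicing $G$ according to where the distinguished point $x$ gets sent. First I would fix $x \in S$ and use the orbit--stabilizer correspondence $G/G_x \leftrightarrow S$ given by $gG_x \mapsto gx$. Under this bijection, the right cosets $H \backslash G$ that are relevant can be organized by the $H$-orbit of $gx$: two elements $g, g'$ with $g'g^{-1} \in H$ certainly send $x$ into the same $H$-orbit, so the map $Hg \mapsto H(gx)$ is a well-defined surjection from $H \backslash G$ onto $H \backslash S$. Since $S/H$ is finite, it suffices to show each fiber of this map is finite; that is, for each $H$-orbit $\mathcal{O} = Hy$ in $S$, the set of right cosets $Hg$ with $gx \in \mathcal{O}$ is finite.

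\medskip

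Next I would analyze one such fiber. Pick a representative $y = g_0 x \in \mathcal{O}$. If $Hg$ lies in the fiber, then $gx = hy = hg_0 x$ for some $h \in H$, so $(hg_0)^{-1}g \in G_x$, i.e. $g \in hg_0 G_x$. Thus $Hg = H g_0 (g_0^{-1} h^{-1} g)$ with $g_0^{-1}h^{-1}g \in g_0^{-1} h^{-1} h g_0 G_x = G_x$; more cleanly, every coset in the fiber has the form $H h g_0 c$ for some $c \in G_x$, and since $Hhg_0 c = Hg_0 c$ only when... — here I should be careful and instead just write $Hg = H g_0 c$ for some $c \in G_x$, because $h \in H$ absorbs into the left coset after conjugating appropriately. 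Concretely: from $g \in h g_0 G_x$ we get $Hg = Hhg_0 G_x \cap Hg = H g_0 G_x$-coset data, so the fiber injects into the set of cosets $\{ H g_0 c : c \in G_x \}$. Now the key point: $H g_0 c = H g_0 c'$ if and only if $g_0 c' c^{-1} g_0^{-1} \in H$; since $c'c^{-1} \in G_x$ and $g_0^{-1} H g_0 \cap G_x \supseteq H_x$ (as $H_x = H \cap G_x$ and conjugating doesn't help directly) — the cleaner route is to reduce to the case $g_0 = e$, i.e. $y \in Hx$, which we may do after replacing $y$ by a point in its orbit and noting orbits partition $S$; then the fiber over $Hx$ is exactly $\{ Hc : c \in G_x \}$, which is the image of $G_x$ in $H \backslash G$ and has cardinality $[G_x : H_x]$ since $Hc = Hc' \iff c'c^{-1} \in H \cap G_x = H_x$. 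For a general orbit, the same count applies because left-translation by $g_0^{-1}$ conjugates the stabilizers: the fiber over $Hy$ has size $[G_{g_0^{-1}y} : H_{g_0^{-1}y}]$-type quantity, but since $G$ is transitive all point-stabilizers are conjugate, so this index is finite as soon as it is finite for one point — which is the hypothesis.

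\medskip

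The main obstacle, and the only genuinely delicate point, is the bookkeeping in the previous paragraph: one must correctly identify the fiber of $H\backslash G \to H\backslash S$ over an arbitrary orbit with a coset space of the form $H_y \backslash G_y$ (or recognize it has the same cardinality as $H_x \backslash G_x$ via transitivity), rather than over-counting. The hypothesis that $[G_x : H_x] < \infty$ for a single $x$ then suffices precisely because transitivity makes all point-stabilizers conjugate in $G$, hence $[G_y : H_y] = [G_x : H_x]$ for all $y$ — so every fiber is finite, there are finitely many fibers by $|S/H| < \infty$, and therefore $[G:H] \le |S/H| \cdot [G_x:H_x] < \infty$. Once the indexing is set up correctly, no further computation is needed.
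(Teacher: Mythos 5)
Your decomposition of $H\backslash G$ along the map $Hg\mapsto H(gx)$ onto $H\backslash S$ is the right picture, and it is essentially the same count as the paper's one-line inequality; but the last step is a genuine gap. The fiber over an orbit $Hy$ with $y=g_{0}x$ consists of the cosets $Hg_{0}c$, $c\in G_{x}$, and its exact size is $[G_{x}:g_{0}^{-1}Hg_{0}\cap G_{x}]=[G_{y}:H_{y}]$. Conjugation by $g_{0}$ does carry $G_{y}$ to $G_{x}$, but it carries $H_{y}=H\cap G_{y}$ to $g_{0}^{-1}Hg_{0}\cap G_{x}$, which equals $H_{x}$ only if $g_{0}$ can be chosen in $H$ (i.e.\ $y\in Hx$) or if $H$ is normal in $G$. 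So "all point-stabilizers are conjugate, hence $[G_{y}:H_{y}]=[G_{x}:H_{x}]$'' is false in general, and your bound only controls the single fiber over the orbit $Hx$.

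Moreover the gap cannot be closed from the stated hypotheses, because the statement as written fails: let $G$ act $2$-transitively on an infinite set $S$ (for instance $G=\mathrm{Sym}(\mathbb{N})$, $S=\mathbb{N}$) and take $H=G_{x}$. Then $S/H$ has two elements and $H_{x}=G_{x}$, so $[G_{x}:H_{x}]=1$, yet $[G:H]=|S|=\infty$. The correct identity is $|G/H|=\sum_{y}[G_{y}:H_{y}]$, the sum running over representatives $y$ of the $H$-orbits, so one needs the finite-index hypothesis at a point of \emph{each} $H$-orbit, or a hypothesis forcing the indices to be constant, such as $H\trianglelefteq G$ (then $H_{gx}=gH_{x}g^{-1}$ and every fiber has size $[G_{x}:H_{x}]$, which recovers the inequality $|G/H|\le|S/H|\cdot|G_{x}/H_{x}|$ used in the paper). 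This repaired form is what is actually needed in Proposition \ref{prop: Aut+ has finite index}, where $H=G^{+}$ is normal in $G$; as stated, though, both your argument and the lemma's one-line proof overreach.
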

\begin{proof}
It follows from the following inequality $\left|G/H\right|\le\left|S/H\right|\cdot\left|G_{x}/H_{x}\right|<\infty$.\end{proof}
\begin{prop}
\label{prop: Aut+ has finite index}Let $X$ be a proper finite-dimensional
CAT(0) cube complex. Let $G=\Aut X$. Assume the following properties
hold:
\begin{enumerate}
\item There exists a hyperplane orbit $G\hhat$ such that $G^{+}$ has finitely
many orbits in $G\hhat$.
\item The group $\Aut^{+}\hhat$ has finite index in $\Aut\hhat$.
\item The hyperplane $\hhat$ satisfies the HAEP.
\item For every hyperplane $\khat$ transverse to $\hhat$ the pair $\hhat\cup\khat$
satisfies the HAEP.
\end{enumerate}
Then, $G^{+}$ has finite index in $G$.\end{prop}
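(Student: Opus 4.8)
The plan is to apply the preceding lemma with $S=G\hhat$, $G=\Aut X$ as given, and $H=G^{+}$. By hypothesis (1), $S/H$ is finite, so it remains to verify that for the fixed hyperplane $\hhat$ the stabilizer $G^{+}_{\hhat}=\Stab_{G^{+}}(\hhat)$ has finite index in $G_{\hhat}=\Stab_{G}(\hhat)$. Once this is done, the lemma immediately yields that $G^{+}$ has finite index in $G$, which is the assertion.

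First I would analyze the restriction homomorphism $\rho\colon G_{\hhat}\to\Aut\hhat$, $f\mapsto f|_{\hhat}$. By hypothesis (3), $\hhat$ satisfies the HAEP, so $\rho$ is \emph{surjective}: every $\hat f\in\Aut\hhat$ extends to some $f\in\Aut X$ stabilizing $\hhat$. Thus $G_{\hhat}/\ker\rho\cong\Aut\hhat$. The kernel $\ker\rho$ consists of automorphisms of $X$ fixing $\hhat$ pointwise; by the standard structure of CAT(0) cube complexes, such an automorphism fixes the carrier $\hhat\times[0,1]$ and in particular fixes pointwise each of the two bounding halfspace-copies, so $\ker\rho\le G^{+}$ (indeed each element of $\ker\rho$ fixes a halfspace, namely either of the two halfspaces bounded by $\hhat$). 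Hence $\ker\rho\subseteq G^{+}_{\hhat}$.

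Next I would show $\rho(G^{+}_{\hhat})\supseteq\Aut^{+}\hhat$, using hypothesis (4) and Lemma \ref{cor:extension of Aut+}. Given any generator $g$ of $\Aut^{+}\hhat$, i.e.\ an automorphism of $\hhat$ fixing some halfspace $\khs\cap\hhat$ of $\hhat$, pick a hyperplane $\khat$ of $X$ transverse to $\hhat$ and realizing this halfspace (such $\khat$ exists since the halfspaces of $\hhat$ arise from its transverse hyperplanes). By hypothesis (4) the pair $\hhat\cup\khat$ satisfies the HAEP, so Lemma \ref{cor:extension of Aut+} extends $g$ to an automorphism $F$ of $X$ fixing the halfspace $\khs$; in particular $F\in G^{+}$ and $F$ stabilizes $\hhat$, so $F\in G^{+}_{\hhat}$ and $\rho(F)=g$. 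Since $\Aut^{+}\hhat$ is generated by such $g$ and $G^{+}_{\hhat}$ is a group, we get $\Aut^{+}\hhat\subseteq\rho(G^{+}_{\hhat})$.

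Combining these facts finishes the proof: from $\ker\rho\subseteq G^{+}_{\hhat}$ and $\rho(G^{+}_{\hhat})\supseteq\Aut^{+}\hhat$ we get that $G^{+}_{\hhat}$ contains the full preimage $\rho^{-1}(\Aut^{+}\hhat)$, whence
\[
[G_{\hhat}:G^{+}_{\hhat}]\ \le\ [\,G_{\hhat}:\rho^{-1}(\Aut^{+}\hhat)\,]\ =\ [\,\Aut\hhat:\Aut^{+}\hhat\,]\ <\ \infty
\]
by hypothesis (2) and surjectivity of $\rho$. Now the lemma applies with $S=G\hhat$, $H=G^{+}$, $x=\hhat$, and we conclude $[G:G^{+}]<\infty$. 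The step I expect to be the main technical point is the precise verification that $\ker\rho\le G^{+}$ and that transverse hyperplanes $\khat$ realizing a prescribed halfspace of $\hhat$ actually exist; both are routine from the combinatorics of hyperplanes in CAT(0) cube complexes (see \cite{Sag95}), but they are where the argument touches the geometry rather than pure group theory.
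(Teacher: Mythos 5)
Your overall route is the same as the paper's: apply the preceding lemma to $S=G\hhat$, $H=G^{+}$, and reduce to showing $[\Stab_{G}(\hhat):\Stab_{G^{+}}(\hhat)]<\infty$ by comparing restrictions to $\hhat$, using condition (3) for surjectivity of $\rho\colon G_{\hhat}\to\Aut\hhat$ and condition (4) together with Lemma \ref{cor:extension of Aut+} to realize every generator of $\Aut^{+}\hhat$ by an element of $G^{+}_{\hhat}$. Those two steps are fine (and the transverse hyperplane $\khat$ inducing a prescribed halfspace of $\hhat$ does exist, since the hyperplanes of $\hhat$ are exactly the intersections with $\hhat$ of the hyperplanes of $X$ crossing $\hhat$).

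However, your treatment of $\ker\rho$ contains a genuine error. You assert that an automorphism of $X$ restricting to the identity on $\hhat$ ``fixes a halfspace, namely either of the two halfspaces bounded by $\hhat$.'' This conflates the two copies of $\hhat$ bounding the carrier $\hhat\times[0,1]$ with the halfspaces $\hhs,\hhs^{*}$ of $X$: an element of $\ker\rho$ can act nontrivially arbitrarily far from $\hhat$ on both sides, and can even exchange $\hhs$ and $\hhs^{*}$ (a reflection through $\hhat$); such an element fixes no halfspace of $X$ pointwise, and a reflection need not lie in $G^{+}$ at all. So the inclusion $\ker\rho\le G^{+}$ is not established, and in general it can fail. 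The repair is as follows: an element of $\ker\rho$ which preserves the two sides acts as the identity on the whole carrier (its action there is $\id_{\hhat}\times\epsilon$ with $\epsilon$ trivial), hence it splits as a product of two automorphisms of $X$, one agreeing with it on one side of the carrier and equal to the identity on the other; each factor fixes a halfspace pointwise, so the side-preserving subgroup of $\ker\rho$ lies in $G^{+}_{\hhat}$, and it has index at most $2$ in $\ker\rho$. Combined with surjectivity of $\rho$ and $\rho(G^{+}_{\hhat})\supseteq\Aut^{+}\hhat$, this yields $[G_{\hhat}:G^{+}_{\hhat}]\le 2\,[\Aut\hhat:\Aut^{+}\hhat]<\infty$, which is all the lemma requires. (For what it is worth, the paper's own proof is terser still and does not discuss the kernel, asserting the index equality directly; your instinct that the kernel must be controlled is correct --- only the justification you gave for it is false as stated.)
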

\begin{proof}
Let $S=G\hhat$ and let $H=G^{+}$. The proposition will follow from
the previous lemma once we show that $G_{\hhat}^{+}=\Stab_{G^{+}}(\hhat)$
has finite index in $G_{\hhat}=\Stab_{G}(\hhat)$. 

By condition 3, we have that the restriction $G_{\hhat}|_{\hhat}$
is exactly $\Aut\hhat$. Similarly, by condition 4 and Lemma \ref{cor:extension of Aut+}
we deduce that $G_{\hhat}^{+}|_{\hhat}$ is excatly $\Aut^{+}\hhat$.
Thus, by condition 2, we get $[G_{\hhat}:G_{\hhat}^{+}]=[\Aut\hhat:\Aut^{+}\hhat]<\infty$. 
\end{proof}

\subsection{Examples of virtually simple automorphism groups}

\subsubsection{\label{sub:The-Kneser-complex}The Kneser complex and the associated
Davis complex}

We define the Kneser complex, $K(n,S)$, to be the simplicial complex
whose vertices are all subsets of size $n$ of $S$, and a collection
of vertices span a simplex if they represent pairwise disjoint subsets.

Let $n,d\in\mathbb{N},$ and let $S=\left\{ 1,\ldots,nd+1\right\} $.
The complex $L:=K_{n}^{d}=K(n,S)$ is a $(d-1)$-dimensional flag
simplicial complex. Since $\left|S\right|\ne2n$, the Erd\H{o}s-Ko-Rado
Theorem tells us that any automorphism of $L$ is induced from a permutation
of $S$ (see Corollary 7.8.2 in \cite{GoRo01}).
\begin{prop}
\label{prop:Kneser is superstar-transitive}The complex $L$ is superstar-transitive.\end{prop}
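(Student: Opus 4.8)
The plan is to verify the superstar-transitivity of $L = K_n^d$ directly from the structural description of its simplices, using the Erd\H{o}s--Ko--Rado-type fact (Corollary 7.8.2 of \cite{GoRo01}) that every automorphism of $L$ comes from a permutation of $S = \{1,\ldots,nd+1\}$. Fix a simplex $\sigma = \{A_0,\ldots,A_k\}$ of $L$, where the $A_i\subset S$ are pairwise disjoint $n$-sets; write $A = A_0\cup\cdots\cup A_k$, so $|A| = (k+1)n$ and the complement $B = S\setminus A$ has size $nd+1-(k+1)n = n(d-1-k)+1$. The key observation is that a vertex $C$ (an $n$-subset of $S$) lies in $\st_L(\sigma)$ if and only if $C$ is disjoint from every $A_i$, i.e.\ $C\subseteq B$; and more generally a simplex $\{C_1,\ldots,C_r\}$ lies in $\st_L(\sigma)$ exactly when the $C_j$ are pairwise disjoint and each $C_j\subseteq B$. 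In other words, $\st_L(\sigma)$ is canonically the join $\sigma * K(n,B)$ of the simplex on $\{A_0,\ldots,A_k\}$ with the Kneser complex $K(n,B)$ on the $n$-subsets of $B$.

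First I would make this join description precise and note that it is natural: for another simplex $\sigma' = \{A_0',\ldots,A_k'\}$ with union $A'$ and complement $B' = S\setminus A'$, we likewise have $\st_L(\sigma') = \sigma' * K(n,B')$. Now let $\phi:\st_L(\sigma)\to\st_L(\sigma')$ be an isomorphism sending $\sigma$ to $\sigma'$. Since $\phi(\sigma) = \sigma'$ and these are the unique maximal simplices of the respective stars meeting every vertex of $\sigma$ (resp.\ $\sigma'$)\,--\,equivalently, $\sigma$ is recovered as the set of vertices adjacent to all vertices of $K(n,B)$, and symmetrically\,--\,$\phi$ must carry the join factor $\sigma$ isomorphically onto $\sigma'$ and the join factor $K(n,B)$ isomorphically onto $K(n,B')$. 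In particular $|B| = |B'|$, which is automatic here since $|B|$ depends only on $k$. The restriction $\phi|_\sigma$ is a bijection $\{A_0,\ldots,A_k\}\to\{A_0',\ldots,A_k'\}$, say $A_i\mapsto A_{\pi(i)}'$, and $\phi|_{K(n,B)}$ is an isomorphism $K(n,B)\to K(n,B')$.

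Next I would promote these two pieces of data to a permutation of $S$. Since $|B| = |B'| = n(d-1-k)+1 \ne 2n$ (the relevant parameter never equals $2n$: if $d-1-k\geq 2$ apply Erd\H{o}s--Ko--Rado on $B$, while if $d-1-k\in\{0,1\}$ then $K(n,B)$ is a point or a discrete set and any bijection $B\to B'$ works), the isomorphism $\phi|_{K(n,B)}:K(n,B)\to K(n,B')$ is induced by a bijection $\beta:B\to B'$. On the other side, choose for each $i$ an arbitrary bijection $\alpha_i:A_i\to A_{\pi(i)}'$, and let $\alpha:A\to A'$ be their union. Then $g = \alpha\cup\beta$ is a permutation of $S$ (a bijection $S\to S$ since $A\sqcup B = S = A'\sqcup B'$), inducing an automorphism $\Phi$ of $L$. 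By construction $\Phi$ sends each $A_i$ to $A_{\pi(i)}' = \phi(A_i)$, so $\Phi$ agrees with $\phi$ on the vertices of $\sigma$; and $\Phi$ acts on $n$-subsets of $B$ via $\beta$, hence agrees with $\phi$ on all of $K(n,B)$. Since $\st_L(\sigma)$ is the join of these two vertex sets and both maps are simplicial, $\Phi|_{\st_L(\sigma)} = \phi$, which is exactly superstar-transitivity.

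The main obstacle I anticipate is getting the join decomposition $\st_L(\sigma) = \sigma * K(n,B)$ and, crucially, its naturality under isomorphisms exactly right\,--\,i.e.\ arguing that $\phi$ respects the two factors rather than mixing them. This should follow cleanly by characterizing each factor intrinsically inside the star (the $\sigma$-factor as the vertices joined to everything, the $K(n,B)$-factor as the link of $\sigma$ within the star), but it deserves care. A secondary, more bookkeeping-level point is the case analysis on $d-1-k$ to legitimately invoke the Erd\H{o}s--Ko--Rado statement for $K(n,B)$; the edge cases $d-1-k = 0,1$ are trivial but must be mentioned so that "every isomorphism of Kneser complexes on $\ne 2n$ points is geometric" is applied only where valid.
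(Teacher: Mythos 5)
There is a genuine gap, and it stems from the notion of star. In this paper $\st_{L}(\sigma)$ is the \emph{open} star: the union of all open simplices whose closure meets that of $\sigma$, i.e.\ all simplices of $L$ containing at least one of the vertices $A_{0},\ldots,A_{k}$ (this is also how $\st(\sigma)$ is used in the proof of Theorem \ref{Uniqueness-of-L-cube-complexes}, where simplices of $\st(\sigma)$ correspond to cubes containing at least one of the $x_{m_{j}}$). Your decomposition $\st_{L}(\sigma)=\sigma\ast K(n,B)$ describes instead the closed star (faces of simplices containing $\sigma$), and for $k\ge1$ the two carry different data. For example, with $n=2$, $d=3$, $\sigma=\{\{1,2\},\{3,4\}\}$, the edge $\{\{1,2\},\{3,5\}\}$ lies in $\st_{L}(\sigma)$ even though $\{3,5\}$ meets $A_{1}=\{3,4\}$ and hence is not a vertex of $K(n,B)$; in particular your claim that a vertex of the star must be contained in $B$ is not the paper's convention. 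Consequently an isomorphism $\phi$ of stars knows more than its restrictions to $\sigma$ and to $\Lk(\sigma)=K(n,B)$: it determines, for each $i$, an isomorphism of the full vertex links $\Lk(A_{i},L)\cong K(n,S\setminus A_{i})$, and the induced permutation of $S\setminus A_{i}$ is constrained on the other $A_{j}$'s. Your construction discards exactly this information: you choose the bijections $\alpha_{i}:A_{i}\to A_{\pi(i)}'$ arbitrarily, so the resulting $\Phi$ will in general disagree with $\phi$ on simplices such as $\{\{1,2\},\{3,5\}\}$ above, and nothing in your argument shows that a compatible choice of the $\alpha_{i}$ exists. The failure is starkest when $\sigma$ is top-dimensional: then $K(n,B)$ is empty and your $\Phi$ is essentially unconstrained, yet $\st_{L}(\sigma)$ is still large.

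The repair is the paper's route, which you should compare with: first use simplex-transitivity to reduce to $\sigma=\sigma'$ with $\phi$ fixing $\sigma$; then apply the Erd\H{o}s--Ko--Rado corollary not to $\Lk(\sigma)$ but to each restriction $\phi_{i}$ of $\phi$ to $\Lk(A_{i},L)\cong K(n,S\setminus A_{i})$, obtaining permutations $\pi_{i}\in{\rm Sym}(S\setminus A_{i})$; observe that $\pi_{i}$ and $\pi_{j}$ agree on $S\setminus(A_{i}\cup A_{j})$ because $\phi_{i}$ and $\phi_{j}$ agree on $\Lk(e_{i,j},L)$; and glue the $\pi_{i}$ into one permutation $\pi$ of $S$ (for $k=0$ extend arbitrarily over $A_{0}$). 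The automorphism induced by $\pi$ then restricts to $\phi$ on the whole open star. Only in the case $k=0$, where the open star of $\sigma$ carries the same information as $\Lk(A_{0})$, does your argument coincide with the correct one; for $k\ge1$ the step replacing the free choices $\alpha_{i}$ by the gluing of the link permutations is essential and is missing from your proposal.
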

\begin{proof}
Let $k$ be a non-negative integer, let $\sigma,\sigma^{\prime}$
be two $k$-simplices of $L$ and let $\phi:\st(\sigma)\to\st(\sigma^{\prime})$
be an isomorphism. Since $L$ is clearly $\Delta^{k}$-transitive
(i.e. any isomorphism from one simplex to another can be extended
to an automorphism of $L$) we may assume without loss of generality
that $\sigma=\sigma^{\prime}=\left[v_{0},\ldots,v_{k}\right]$ and
$\phi$ fixes $\sigma$.

The map $\phi$ induces automorphisms on the links $\phi_{i}:\Lk(v_{i},L)\to\Lk(v_{i},L)$
for $i=0,\ldots,k$. For all $1\le i\le k$, the link $\Lk(v_{i},L)$
is naturally identified with $K(n,S\setminus v_{i})$ and thus $\phi_{i}$
is given by a permutation $\pi_{i}\in{\rm Sym}\left(S\setminus v_{i}\right)$.

If $k=0$, let $\pi$ be the permutation which fixes the set $v_{i}$
and restricts to $\pi_{i}$ on $S\setminus v_{i}$. 

If $k>0$, the maps $\phi_{i}$ and $\phi_{j}$ coincide along $\Lk\left(e_{i,j},L\right)$
where $e_{i,j}$ is the edge connecting $v_{i}$ and $v_{j}$. As
before $\Lk\left(e_{i,j},L\right)$ is naturally identified with $K\left(n,S\setminus\left(v_{i}\cup v_{j}\right)\right)$
and thus $\pi_{i}|_{S\setminus\left(v_{i}\cup v_{j}\right)}=\pi_{j}|_{S\setminus\left(v_{i}\cup v_{j}\right)}$.
Hence, the maps $\pi_{i}$ define a unique permutation $\pi\in{\rm Sym}(S)$
whose restriction to $S\setminus v_{i}$ is $\pi_{i}$ for all $0\le i\le k$.

In both cases the permutation $\pi$ defines an automorphism $\Phi$
of $L$ whose restriction to $\st(\sigma)$ is $\phi$.\end{proof}
\begin{lem}
\label{lem: Aut+(Kneser) is Aut(Kneser)}If $n\ge2$ then the group
$\Aut L$ is generated by all elements which fix the star of a vertex
in $L$. In particular, $\Aut^{+}D(L)$ acts transitively on hyperplanes.\end{lem}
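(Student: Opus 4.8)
The statement asserts that $\Aut L$ (for $L = K_n^d$ with $n\ge 2$) is generated by its subgroups $\Fix_{\Aut L}(\st(v))$ as $v$ ranges over vertices of $L$. By the Erd\H{o}s--Ko--Rado discussion preceding the lemma, $\Aut L$ is exactly $\mathrm{Sym}(S)$ acting on $n$-subsets of $S = \{1,\dots,nd+1\}$. So the real content is a statement about the symmetric group: I want to show that $\mathrm{Sym}(S)$ is generated by the subgroups $\{g \in \mathrm{Sym}(S) : g \text{ fixes } \st_L(v)\}$. First I would unwind what it means for $g\in\mathrm{Sym}(S)$ to fix $\st_L(v)$ pointwise, where $v$ is a fixed $n$-subset $A\subseteq S$. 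The star $\st_L(A)$ consists of $A$ together with all $n$-subsets disjoint from $A$ and all simplices among them; fixing it pointwise forces $g(A) = A$ and $g(B) = B$ for every $n$-subset $B \subseteq S\setminus A$. Since $n\ge 2$ and $|S\setminus A| = n(d-1)+1 \ge n+1 > n$ (using $d\ge 2$; note $d=1$ would make $L$ a set of isolated vertices and the claim is about $n\ge 2, d\ge 2$ implicitly through the HAEP application, but I should double-check the $d=1$ case separately or note it is vacuous/trivial), the condition that $g$ fixes every $n$-subset of $S\setminus A$ forces $g$ to fix $S\setminus A$ pointwise. Hence $\Fix_{\Aut L}(\st_L(A)) = \{g \in \mathrm{Sym}(S) : g|_{S\setminus A} = \id,\ g(A)=A\} \cong \mathrm{Sym}(A)$.

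The key step is then purely group-theoretic: show that $\mathrm{Sym}(S)$ is generated by the subgroups $\mathrm{Sym}(A)$ as $A$ ranges over all $n$-subsets of $S$. Since $n\ge 2$, each such $\mathrm{Sym}(A)$ contains transpositions, so it suffices to show every transposition $(i\ j)$ lies in the generated subgroup --- but that is immediate, since $\{i,j\}$ extends to an $n$-subset $A$ (as $|S| = nd+1 \ge n \ge 2$), and $(i\ j) \in \mathrm{Sym}(A)$. Transpositions generate $\mathrm{Sym}(S)$, so we are done with the first sentence of the lemma.

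For the second sentence --- ``$\Aut^+ D(L)$ acts transitively on hyperplanes'' --- I would argue as follows. By Remark \ref{remark on Davis complexes}, every automorphism $\Phi$ of $L$ induces an automorphism $F_\Phi$ of $D(L)$; when $\Phi$ fixes $\st_L(\xi)$ for a vertex $\xi$, the induced map $F_\Phi$ fixes (setwise, and in fact pointwise on links in the appropriate sense) the hyperplane $\hhat_\xi$ associated to $\xi$ and hence fixes one of its halfspaces, so $F_\Phi \in \Aut^+ D(L)$. By the first part, $\Aut^+ D(L)$ therefore contains $F_\Phi$ for every $\Phi \in \Aut L$. Now $\Aut L = \mathrm{Sym}(S)$ acts transitively on the vertices of $L$ (all $n$-subsets are equivalent under $\mathrm{Sym}(S)$), hence transitively on the edge-labels at the base vertex of $D(L)$, hence the corresponding $F_\Phi$'s move the hyperplanes through the base vertex transitively among themselves. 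Combined with the fact that $W_L$ (which lies in $\Aut^+ D(L)$, since each generator $\xi$ acts as a reflection fixing the halfspace on one side of $\hhat_\xi$) acts transitively on vertices of $D(L)$ and hence can move any hyperplane to one through the base vertex, we conclude $\Aut^+ D(L)$ is transitive on all hyperplanes of $D(L)$.

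**Main obstacle.** The genuinely delicate point is the identification of $\Fix_{\Aut L}(\st_L(v))$ --- specifically, being careful about whether fixing the \emph{star} pointwise (as opposed to, say, only stabilizing it) really forces $g$ to be the identity off $A$, which needs $|S\setminus A| > n$, i.e.\ $d \ge 2$; the borderline arithmetic should be checked and the trivial/edge cases ($d=1$, or $S\setminus A$ too small) either excluded by hypothesis or handled directly. Everything downstream is then routine: once $\Fix(\st_L(v)) \cong \mathrm{Sym}(v)$, the generation statement is the elementary fact that point-stabilizers' ``small'' symmetric subgroups generate, and the transitivity-on-hyperplanes conclusion follows formally from transitivity of $\mathrm{Sym}(S)$ on $n$-subsets together with the already-established membership $F_\Phi \in \Aut^+ D(L)$ and the action of $W_L$.
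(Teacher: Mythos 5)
Your proof of the first assertion is correct and essentially the paper's: both arguments reduce to the observation that a transposition $(a\,b)$ lies in the fixator of the star of any vertex (an $n$-subset) containing $a$ and $b$, and that transpositions generate $\mathrm{Sym}(S)=\Aut L$. Your exact identification $\Fix_{\Aut L}(\st_L(v))\cong\mathrm{Sym}(v)$ is not needed for this; only the easy inclusion $\mathrm{Sym}(A)\subseteq\Fix_{\Aut L}(\st_L(A))$ is used, and that holds for $d=1$ as well, so the borderline case you worry about is harmless.

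The second assertion is where there is a genuine gap, in two places. First, from ``$F_\Phi$ fixes the hyperplane $\hhat_\xi$'' you infer that $F_\Phi$ ``fixes one of its halfspaces, so $F_\Phi\in\Aut^{+}D(L)$''. This inference is invalid: $\Aut^{+}$ is generated by automorphisms fixing a halfspace \emph{pointwise}, and $F_\Phi$ is not the identity on either halfspace, since it induces $\Phi$ on every link. What is true, and what the paper uses, is that when $\Phi$ fixes $\st_L(\xi)$ pointwise, the automorphism $F_{x,\Phi}$ fixes the carrier of the hyperplane dual to the edge at $x$ labelled $\xi$ pointwise; one then splits it as a product of two automorphisms, each agreeing with $F_{x,\Phi}$ on one halfspace and with the identity on the other, i.e.\ a product of two halfspace-fixators (this is the remark in the appendix that for $G=\Aut(X)$ carrier-fixators lie in $G^{+}$). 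Second, and more seriously, your passage from hyperplanes through the base vertex to arbitrary hyperplanes relies on $W_L\le\Aut^{+}D(L)$ because ``each generator $\xi$ acts as a reflection fixing the halfspace on one side of $\hhat_\xi$''. A reflection swaps the two halfspaces of $\hhat_\xi$ and fixes neither, and in fact $W_L\not\le\Aut^{+}D(L)$ in general: for $d=1$ the complex $D(L)$ is the $(n+1)$-regular tree, $\Aut^{+}$ preserves the bipartition of its vertices, while the Coxeter generators invert edges. In particular $\Aut^{+}D(L)$ need not be vertex-transitive, so this step cannot be repaired as stated. The paper avoids it by constructing $F_{x,f}$ at \emph{every} vertex $x$, obtaining transitivity of $\Aut^{+}D(L)$ on the hyperplanes whose carrier contains $x$, and then chaining along an edge path: two adjacent vertices lie in the carrier of the hyperplane dual to the edge joining them, so the orbits of hyperplanes near consecutive vertices coincide, which yields transitivity on all hyperplanes.
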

\begin{proof}
Since $\Aut L={\rm Sym}(S)$ and since ${\rm Sym}(S)$ is generated
by transpositions, it suffices to show that all transpositions fix
a star of a vertex. But this is true since the transposition exchanging
$a,b\in S$ fixes the star of a vertex which contains $a$ and $b$.

To prove that there is only one orbit of hyperplanes, note that for
every $x\in D(L)$ and for every $f\in\Aut L$ there is an automorphism
$F_{x,f}$ of $D(L)$ that fixes $x$ and at each vertex the induced
automorphism on the links is $f$. Moreover, if $f$ fixes a star
of a vertex then $F_{x,f}$ fixes the corresponding hyperplane near
$x$. By the above, for all $x\in X$ and $f\in\Aut L$, $F_{x,f}$
is in $\Aut^{+}D(L)$. Since $\Aut L$ acts transitively on the vertices
of $L$ we get that $F_{x,f}$ acts transitively on the hyperplanes
near $x$. This holds for all $x\in D(L)$, and any two adjacent vertices
are contained in the carrier of a hyperplane, and therefore $\Aut^{+}D(L)$
acts transitively on hyperplanes.\end{proof}
\begin{cor}
\label{cor: virtual simplicity for Kneser}Let $n\ge2,d\ge1$ and
let $L=K(n,\left\{ 1,\ldots,nd+1\right\} )$. The group $G=\Aut D(L)$
is virtually simple.\end{cor}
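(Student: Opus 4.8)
The goal is to show $G = \Aut D(L)$ is virtually simple for $L = K(n,\{1,\dots,nd+1\})$ with $n\ge 2$, $d\ge 1$. The strategy is to verify the four hypotheses of Proposition~\ref{prop: Aut+ has finite index} to deduce that $G^+ = \Aut^+ D(L)$ has finite index in $G$, and then to invoke the simplicity result for $G^+$ proved in the appendix (the generalization of Tits and Haglund--Paulin), which applies once we check that the $G$-action is non-elementary / rank-one in the appropriate sense. Combining these two facts gives that $G$ contains a finite-index simple subgroup, i.e.\ $G$ is virtually simple.

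\emph{Checking the hypotheses of Proposition~\ref{prop: Aut+ has finite index}.} First, $X = D(L)$ is proper and finite-dimensional since $L$ is a finite flag complex (of dimension $d-1$). For condition~(1): by Lemma~\ref{lem: Aut+(Kneser) is Aut(Kneser)}, $\Aut^+ D(L)$ already acts transitively on hyperplanes, so $G^+$ has a single orbit on $G\hhat$ — in particular finitely many. For condition~(2): we need $\Aut^+\hhat$ to have finite index in $\Aut\hhat$. Here I would use Remark~\ref{remark on Davis complexes-hyperplanes}: a hyperplane $\hhat$ dual to an edge labeled $\xi$ is isomorphic to $D(\Lk(\xi,L))$, and $\Lk(\xi,L) \cong K(n, S\setminus\xi) = K(n,\{1,\dots,n(d-1)+1\})$, which is again a Kneser complex of the same type. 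By Proposition~\ref{prop:Kneser is superstar-transitive} this link is superstar-transitive, so $\hhat$ is itself (isomorphic to) the unique CAT(0) cube complex on that link. If $n\ge 2$ and the link is not a single point (i.e.\ $d\ge 2$), Lemma~\ref{lem: Aut+(Kneser) is Aut(Kneser)} applied to $\Lk(\xi,L)$ shows $\Aut^+\hhat = \Aut\hhat$, so condition~(2) holds with index $1$; if $d=1$ the hyperplane is a point and the statement is trivial, so one should handle $d=1$ (a regular tree) separately or note it reduces to Tits's theorem directly. For conditions~(3) and~(4): these are immediate from Proposition~\ref{prop: Hyperplane Automorphism Extension Property}, since $X$ is the unique $L$-cube-complex and that proposition gives the HAEP for \emph{every} transverse set of hyperplanes, in particular singletons $\hhat$ and transverse pairs $\hhat\cup\khat$. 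Thus Proposition~\ref{prop: Aut+ has finite index} applies and $[G : G^+] < \infty$.

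\emph{Simplicity of $G^+$.} Now I would invoke the appendix's theorem: for a rank-one CAT(0) cube complex $X$ on which $\Aut X$ acts without a fixed point at infinity and essentially (no global invariant subcomplex / proper halfspace), the subgroup $\Aut^+ X$ generated by halfspace fixators is simple (possibly trivial, which we must exclude). So I must check that $D(K_n^d)$ is rank-one and that the action is essential and non-elementary. Essentiality and cocompactness are clear because $W_L$ acts cocompactly, and irreducibility (no product decomposition) holds because the Kneser complex $K_n^d$ is not a join — a $k$-simplex uses up $k+1$ disjoint $n$-sets out of $nd+1$ elements, and for $n\ge 2$, $d\ge 2$ one checks $K_n^d$ is connected and has no ``universal vertices,'' so $W_L$ is not a direct product and $D(L)$ does not split as a product; being irreducible, infinite-dimensional-in-the-right-sense, and cocompact, rank rigidity (\cite{CaSa11}) gives that $\Aut D(L)$ contains a rank-one isometry, so $D(L)$ is rank-one. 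Non-triviality of $G^+$: Lemma~\ref{lem: Aut+(Kneser) is Aut(Kneser)} exhibits plenty of halfspace-fixing automorphisms (indeed $G^+$ is transitive on hyperplanes and hence infinite), so $G^+\neq 1$. Therefore $G^+$ is simple, and being of finite index in $G$, we conclude $G$ is virtually simple.

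\emph{Main obstacle.} The routine part is the verification of the four numbered conditions — that is essentially bookkeeping with the cited lemmas, modulo separating the degenerate case $d=1$. The genuine content, and the step I expect to require the most care, is confirming that the hypotheses of the appendix's simplicity theorem are met — specifically that $D(K_n^d)$ is irreducible (not a nontrivial product), which amounts to showing $K_n^d$ is not a simplicial join, and that the $\Aut$-action is essential and cocompact so that rank rigidity can be applied to place $D(L)$ in the rank-one regime where the theorem bites. Once rank-one-ness and non-triviality of $G^+$ are in hand, the conclusion is formal.
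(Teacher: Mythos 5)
Your overall architecture matches the paper's (verify the four hypotheses of Proposition~\ref{prop: Aut+ has finite index} to get $[G:G^{+}]<\infty$, then feed the appendix's simplicity result), but your verification of condition~(2) is wrong. You claim that Lemma~\ref{lem: Aut+(Kneser) is Aut(Kneser)}, applied to $\Lk(\xi,L)\cong K(n,\{1,\ldots,n(d-1)+1\})$, gives $\Aut^{+}\hhat=\Aut\hhat$. The lemma does not say this: it says that $\Aut(\Lk(\xi,L))$ is generated by star-fixing elements and that, consequently, $\Aut^{+}\hhat$ acts transitively on the hyperplanes of $\hhat$ --- i.e.\ it gives condition~(1) for $\hhat$, not condition~(2). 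Moreover the equality is false in general: already for $d=2$ the hyperplanes are $(n+1)$-regular trees, where every halfspace fixator fixes an edge and hence preserves the bipartition of vertices, so $\Aut^{+}\hhat$ is a proper (index-two) subgroup of $\Aut\hhat$. The paper closes exactly this gap by induction on $d$: since $\hhat\cong D(K(n,\{1,\ldots,n(d-1)+1\}))$ (your Remark~\ref{remark on Davis complexes-hyperplanes} observation), condition~(2) at level $d$ is precisely the finite-index statement of the corollary at level $d-1$, with trivial base case $d=0$ (a single vertex). Your argument should be restructured as that induction; the ``index $1$'' shortcut does not survive.

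A secondary omission: Corollary~\ref{Simplicity of Aut(X)} only yields that a normal subgroup of $G^{+}$ is all of $G^{+}$ or acts trivially on $\partial X$; to upgrade this to simplicity of $G^{+}$ you must also check that $\Aut(X)$ acts faithfully on $\partial X$. The paper does this via Claim~\ref{criteria for faithful action}, verifying that every sector $\hhs\cap\khs$ of $D(L)$ contains a halfspace because the Kneser complex is not the star of any of its edges. You verify essentiality, irreducibility and non-elementarity (correctly, and in essentially the same way as the paper), but the faithfulness hypothesis is missing and should be added; your treatment of non-triviality of $G^{+}$ is fine.
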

\begin{proof}
We begin by showing that $G^{+}$ is simple by verifying the assumptions
of Corollary \ref{Simplicity of Aut(X)} and Claim \ref{criteria for faithful action}. 

For all finite flag simplicial $L$ the right-angled Davis complexes
$D(L)$ is proper, finite dimensional and cocompact (since the Coxeter
group $W_{L}$ acts cocompactly on $D(L)$). The complex $D(L)$ is
essential since $L$ is not the star of any of its vertices. Similarly
every sector $\hhs\cap\khs$ in $D(L)$ contains a hyperplane because
$L$ is not the star of any of its edges. The complex $D(L)$ is irreducible
since $L$ is not a join of two subcomplexes, because the complement
graph (the graph of non-empty intersections of subsets of size $n\ge2$
in $\left\{ 1,\ldots,nd+1\right\} $) is connected. The group $G$
is non-elementary because $\partial D(L)=\Lambda G$ contains more
than 2 points (because $L$ contains an independant set of vertices
of size 3) and $G$ acts without a fixed point at infinity (the Coxeter
group acts with inversions along hyperplanes, thus does not fix a
point in $\partial D(L)$).

In order to prove the corollary it suffices to show that $G^{+}$
is of finite index in $G$. We prove it by induction on $d\ge0$.
The base case $d=0$, is trivial since the complex $D(L)$ is a single
vertex, thus $G$ is trivial.

The conditions of Proposition \ref{prop: Aut+ has finite index} hold,
Condition 1 by Lemma \ref{lem: Aut+(Kneser) is Aut(Kneser)}, Condition
2 by the induction hypothesis (the hyperplane are isomorphic to the
Davis complex associated to $K\left(n,\left\{ 1,\ldots,n\left(d-1\right)+1\right\} \right)$),
and Conditions 3 and 4 by Propositions \ref{prop: Hyperplane Automorphism Extension Property}
and \ref{prop:Kneser is superstar-transitive}.
\end{proof}

\subsubsection{Superstar-transitive graphs and unique square complexes}
\begin{lem}
Let $L$ be a finite, connected, flag, simplicial, superstar-transitive
graph. If all the vertices in $L$ have degree $\ge3$, then the subgroup,
$\Aut^{+}(L)$, of $\Aut(L)$ generated by the automorphisms which
fix the star of a vertex has at most two vertex orbits. Moreover,
these orbits form a partition of the graph. 

In particular $\Aut^{+}D(L)$ has at most two orbits of hyperplanes.\end{lem}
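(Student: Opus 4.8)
The plan is to prove the statement about superstar-transitive graphs with minimum degree $\geq 3$ by exploiting the local-to-global behavior of the vertex orbits under $\Aut^{+}(L)$, where here $L$ is $1$-dimensional so a ``star of a vertex'' $\st_L(\xi)$ is just $\xi$ together with its neighbors and the edges between. First I would fix a vertex $v$ and consider the set $O(v)$ of vertices reachable from $v$ by a sequence of automorphisms each of which fixes the star of some vertex. I want to show $O(v)$ together with $O(w)$, for $w$ not in $O(v)$, partition $V(L)$. The key observation is the following propagation lemma: if $\xi$ and $\zeta$ are adjacent vertices, then there is an element of $\Aut^{+}(L)$ fixing $\st_L(\xi)$ that swaps $\zeta$ with some other neighbor of $\xi$ — \emph{provided} $\xi$ has a nontrivial symmetry of its star fixing $\xi$ and moving $\zeta$. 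Since $\deg(\xi)\geq 3$ and $L$ is flag (hence triangle-free around... well, not triangle-free, but the star of a vertex in a graph is a cone), superstar-transitivity applied with $\sigma=\sigma'=\{\xi\}$ means \emph{every} permutation of the neighbors of $\xi$ (subject to preserving adjacencies among them, i.e. respecting the ``link graph'' $\Lk_L(\xi)$) extends to an automorphism of $L$. In particular transpositions of two neighbors $\zeta_1,\zeta_2$ of $\xi$ that lie in the same orbit of $\Aut(\Lk_L(\xi))$ extend, and these extensions fix $\st_L(\xi)$.

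The main argument then runs as follows. Define a relation on $V(L)$: $u\sim u'$ if $u'\in \Aut^{+}(L)\cdot u$. This is an equivalence relation with classes the $\Aut^{+}(L)$-orbits. I claim there are at most two classes and they partition $V(L)$ — the ``partition'' clause is automatic for orbits, so the real content is the bound of two. To get this I would argue that adjacent vertices are either in the same orbit or there is a ``bipartite-type'' obstruction. Concretely: pick an edge $\{u,w\}$. Using superstar-transitivity at $u$ with the full symmetric group on a suitable sub-orbit of neighbors, together with the degree-$\geq 3$ hypothesis (which guarantees enough room — with only $2$ neighbors a star-fixing automorphism could at best transpose them, but we may have no neighbors at all in the orbit), I would show that from $w$ one can reach, via $\Aut^{+}(L)$, \emph{every second neighbor} along any path, i.e. if $P=u_0u_1u_2\cdots u_\ell$ is a path then $u_0\sim u_2\sim u_4\sim\cdots$ and $u_1\sim u_3\sim\cdots$. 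Since $L$ is connected, every vertex is at some distance from a fixed basepoint $v$, and the parity of that distance is almost an invariant — it fails to be an invariant exactly when $L$ contains an odd cycle, in which case one shows all vertices lie in a single orbit. So: if $L$ is bipartite there are (at most) two orbits, matching the bipartition; if $L$ is non-bipartite there is one orbit. Either way the orbits partition $V(L)$, and the ``in particular'' about hyperplanes follows immediately from Remark~\ref{remark on Davis complexes} and Remark~\ref{remark on Davis complexes-hyperplanes} together with the correspondence between vertex orbits of $\Aut^{+}(L)$ on $L$ and hyperplane orbits of $\Aut^{+}D(L)$ established just as in Lemma~\ref{lem: Aut+(Kneser) is Aut(Kneser)}: the automorphisms $F_{x,f}$ for $f\in\Aut^{+}(L)$ lie in $\Aut^{+}D(L)$ and act on the hyperplanes through $x$ by $f$ on $\Lk(x,X)\cong L$.

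The step I expect to be the main obstacle is the propagation claim $u_0\sim u_2$ for an adjacent-to-adjacent pair, i.e. showing that two vertices at distance $2$ with a common neighbor $\xi$ really lie in the same $\Aut^{+}(L)$-orbit. The subtlety is that superstar-transitivity only guarantees extension of automorphisms of $\st_L(\xi)$, and $\Aut(\st_L(\xi))$ fixing $\xi$ is $\Aut(\Lk_L(\xi))$, which need \emph{not} act transitively on $V(\Lk_L(\xi))$ — two neighbors of $\xi$ in different $\Aut(\Lk_L(\xi))$-orbits cannot be swapped by a star-fixing automorphism. The degree-$\geq 3$ hypothesis is what I would use to push past this: I would need to show that even if the two target neighbors $u_0,u_2$ are not directly interchangeable at $\xi$, one can connect them through a chain of star-fixing automorphisms at \emph{other} vertices, using that $L$ is connected and that stars overlap enough. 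One clean way to organize this is to prove instead the slightly different statement that the ``second-neighbor'' relation (there exists a path of length $2$) already generates, on each orbit of $\Aut^{+}(L)$, the whole orbit, and that the failure of this relation to connect all of $V(L)$ is precisely bipartiteness — this reduces the whole lemma to a connectivity/parity bookkeeping once the basic propagation at a single vertex of degree $\geq 3$ is in hand. I would also double-check the edge case $\deg = 3$ where at a given vertex only a transposition of two of the three neighbors is available, to make sure the chaining still works; this is where I expect the argument to need the most care.
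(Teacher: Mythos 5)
There is a genuine gap, and it sits exactly at the step you yourself flagged as the main obstacle. Your propagation lemma rests on the claim that a permutation of the neighbors of $\xi$, extended to $L$ via superstar-transitivity at the $0$-simplex $\sigma=\{\xi\}$, ``fixes $\st_L(\xi)$'' and hence lies in $\Aut^{+}(L)$. That is false as stated: such an extension fixes the vertex $\xi$ and stabilizes $\st_L(\xi)$ only setwise, since by design it moves neighbors of $\xi$; and there is no reason it should fix the star of any \emph{other} vertex pointwise either. So the automorphisms you produce are not known to be products of star-fixators, and the whole parity/connectivity bookkeeping that follows (distance-two vertices in one orbit, at most two orbits, one orbit in the non-bipartite case) has no certified elements of $\Aut^{+}(L)$ to run on. Your proposed repair --- chaining star-fixing automorphisms at other vertices because ``stars overlap enough'' --- is not an argument, and the worry you attach to it (non-transitivity of $\Aut(\Lk_L(\xi))$) is actually vacuous here: since $L$ is a flag graph it is triangle-free, so $\Lk_L(\xi)$ is edgeless and every permutation of the neighbors is an automorphism of the star; the real issue is membership in $\Aut^{+}(L)$, not availability of the permutation.

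The paper closes this gap by invoking $\st(\Delta^{1})$-transitivity, i.e.\ superstar-transitivity at an \emph{edge}, and this is also where the degree hypothesis genuinely enters (not as ``room to chain''). Given $v$ with three distinct neighbors $v_{1},v_{2},w$ (this is what $\deg\ge 3$ provides), consider the automorphism of $\st_L([v,w])$ that exchanges the edges $[v,v_{1}]$ and $[v,v_{2}]$ and is the identity on everything else; since $L$ is triangle-free this is a legitimate automorphism of the edge-star fixing the edge $[v,w]$. By $\st(\Delta^{1})$-transitivity it extends to $\Phi\in\Aut(L)$, and because $\Phi$ restricted to $\st_L([v,w])$ is this map and $\st_L(w)\subset\st_L([v,w])$, the extension fixes $\st_L(w)$ \emph{pointwise}; hence $\Phi\in\Aut^{+}(L)$ and it swaps $v_{1}$ and $v_{2}$. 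From this, all neighbors of any fixed vertex lie in a single $\Aut^{+}(L)$-orbit, and your connectivity/parity argument (which is essentially the paper's) correctly yields at most two vertex orbits; likewise your transfer to hyperplane orbits of $\Aut^{+}D(L)$ via the automorphisms $F_{x,f}$, as in the Kneser lemma, is fine once the vertex-orbit statement is actually established.
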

\begin{proof}
If $v_{1},v_{2},w$ are adjacent to $v$ then, by $\st(\Delta^{1})$-transitivity,
there exists an automorphism exchanging $v_{1},v_{2}$ and fixing
the star of $w$. Thus all the adjacent vertices of $v$ are in the
same orbit of $\Aut^{+}(L)$. This holds for all $v$, and by connectivity
we get the desired conclusion.

As in the proof of Lemma \ref{lem: Aut+(Kneser) is Aut(Kneser)},
for all $x\in D(L)$, $\Aut^{+}D(L)$ has at most two orbits of hyperplanes
whose carrier contains $x$. Using the fact that any two adjacent
vertices are in the carrier of two transverse hyperplanes we deduce
that $\Aut^{+}D(L)$ has at most two orbits of hyperplanes in $D(L)$.
\end{proof}
This Lemma allows us to deduce, as in the proof of Corollary \ref{cor: virtual simplicity for Kneser}
the following.
\begin{thm}
\label{thm:v.simplicity of square complexes}Let $L$ be a finite,
connected, flag, simplicial, superstar-transitive graph all of whose
vertices have degree $\ge3$ and which is not a complete bi-partite
graph. Let $X=D(L)$ be the unique CAT(0) square complex whose vertex
links are isomorphic to $L$. Then $\Aut(X)$ is virtually simple.
\end{thm}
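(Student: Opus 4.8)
The plan is to deduce Theorem \ref{thm:v.simplicity of square complexes} from the general machinery assembled above, exactly paralleling the proof of Corollary \ref{cor: virtual simplicity for Kneser}, but now over the simpler induction that the hyperplanes of a CAT(0) square complex are trees. First I would observe that since $L$ is a finite connected flag simplicial graph all of whose vertices have degree $\ge3$, the Davis complex $X=D(L)$ is proper, finite-dimensional ($2$-dimensional) and cocompact, since $W_L$ acts cocompactly. It is essential because $L$ is not the star of any of its vertices (each vertex has degree $\ge3$, but more to the point $L$ has more than one vertex and is connected, so no vertex-star is all of $L$); every sector $\hhs\cap\khs$ contains a hyperplane because $L$ is not the star of any of its edges (here the degree $\ge3$ hypothesis is exactly what rules out $L=\st_L(e)$ for an edge $e$); and $X$ is irreducible because $L$, being connected and not a complete bipartite graph, is not a nontrivial join. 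Finally $G=\Aut(X)$ is non-elementary: $\partial X=\Lambda G$ has more than two points since $L$ contains an independent set of size $3$ (a connected flag graph on $\ge 3$ vertices that is not complete bipartite — indeed not a single edge or star — cannot have independence number $\le 2$ under the degree condition, or one simply notes $W_L$ is not virtually cyclic), and $W_L\le G$ acts with inversions in every hyperplane so fixes no point of $\partial X$. By Corollary \ref{Simplicity of Aut(X)} together with Claim \ref{criteria for faithful action}, this already gives that $G^+$ is simple.

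Next I would show that $G^+=\Aut^+ X$ has finite index in $G$, which by the simplicity of $G^+$ makes $G$ virtually simple. The argument is by induction, here on the diameter (or on the number of vertices) of $L$, with the goal of verifying the four hypotheses of Proposition \ref{prop: Aut+ has finite index} for a suitable hyperplane orbit $G\hhat$. Condition 1 — that $G^+$ has finitely many orbits in $G\hhat$ — is supplied by the Lemma just proved, which says $\Aut^+D(L)$ has at most two orbits of hyperplanes; so pick $\hhat$ in one of these orbits. Conditions 3 and 4 — the HAEP for $\hhat$ and for transverse pairs $\hhat\cup\khat$ — follow from Proposition \ref{prop: Hyperplane Automorphism Extension Property}, once we know $X$ is the \emph{unique} CAT(0) $L$-cube-complex; and that uniqueness is guaranteed by Theorem \ref{Uniqueness-of-L-cube-complexes} together with the standing hypothesis that $L$ is superstar-transitive. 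This is precisely the content of the assertion $X=D(L)$ in the statement.

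The one step requiring genuine care is Condition 2: that $\Aut^+\hhat$ has finite index in $\Aut\hhat$. Here $\hhat$ is a hyperplane of the square complex $X$, hence a tree (it is a CAT(0) cube complex of dimension $\le 1$; more precisely, by Remark \ref{remark on Davis complexes-hyperplanes} it is isomorphic to $D(\Lk(\xi,L))$, and $\Lk(\xi,L)$ is a discrete set of $\deg_L(\xi)\ge 3$ vertices, so $D(\Lk(\xi,L))$ is the $\deg_L(\xi)$-regular — or rather the biregular — tree). For a regular tree $T$ it is classical (Tits \cite{Tit70}) that $\Aut^+T$ is simple of index $2$ in $\Aut T$, since $\Aut^+T$ is exactly the index-two subgroup preserving the bipartition of $T$; so Condition 2 holds outright in this setting, and no induction on $L$ is actually needed for this clause — the tree case is the base and the whole case at once. (If one prefers to stay inside the paper's framework, one can instead note that by the second conclusion of Lemma \ref{cor:extension of Aut+} it suffices that any element of $\Aut^+\hhat$ extend, which it does; but the cleanest route is the classical tree fact.) With all four conditions of Proposition \ref{prop: Aut+ has finite index} in hand, $G^+$ has finite index in $G$, and combined with the simplicity of $G^+$ established above, $\Aut(X)$ is virtually simple. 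The main obstacle, then, is really just the bookkeeping to confirm that the degree $\ge3$ and non-bipartite hypotheses deliver essentiality, the sector condition, irreducibility, and non-elementarity — all of which are the square-complex shadows of the checks made for the Kneser complexes in Corollary \ref{cor: virtual simplicity for Kneser}.
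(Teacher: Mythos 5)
Your proposal is correct and takes essentially the same approach as the paper, which deduces the theorem by repeating the proof of Corollary~\ref{cor: virtual simplicity for Kneser}, with the preceding lemma on the (at most two) hyperplane orbits of $\Aut^{+}D(L)$ supplying Condition 1 of Proposition~\ref{prop: Aut+ has finite index}, and uniqueness plus Proposition~\ref{prop: Hyperplane Automorphism Extension Property} supplying Conditions 3 and 4. Your only deviation is to settle Condition 2 by quoting Tits' classical fact that $\Aut^{+}$ of a regular tree of degree $\ge3$ is the simple index-two type-preserving subgroup, which is exactly where the paper's one-step induction bottoms out (the hyperplane $D(\Lk(\xi,L))$ is the $\deg_{L}(\xi)$-regular tree, so this shortcut is legitimate).
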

\appendix

\counterwithin{thm}{section}
\counterwithin{figure}{section}

\section{Appendix: Simplicity of automorphism groups of rank one cube complexes}

Let $T$ be a tree, and let $Aut(T)$ be the automorphism group of
$T$. In \cite{Tit70}, J. Tits showed that under certain conditions
the subgroup $Aut^{+}(T)$ of $Aut(T)$ generated by the fixators
of edges of $T$ is simple. In \cite{HaPa98}, F. Paulin
and F. Haglund generalized Tits' result for Gromov hyperbolic cube
complexes. In this appendix, we further generalize these results to
rank one CAT(0) cube complexes. A similar result for (not necessarily
locally finite) thick right-angled buildings was established by P-E.
Caprace in \cite{Cap14}. 

Let $X$ be a CAT(0) metric space. A rank one isometry is a hyperbolic
isometry $g\in\Isom(X)$ none of whose axes bounds a flat half-plane
(i.e. a subspace which is isometric to the Euclidean half plane).
Any hyperbolic element in a Gromov hyperbolic cube complex is such
since there are no flat half-spaces in a hyperbolic cube complex.
In general, rank one elements in locally compact CAT(0) cube complexes
act on the boundary $\partial X$ with a north-south dynamics similarly
to the action of hyperbolic elements in hyperbolic spaces. For $G\subset\Isom(X)$
we denote by $\Lambda(G)$ the \textit{limit set} of $G$ in $\partial X$,
i.e., the set of accumulation points in $\partial X$ of an orbit
of $G$. The group $G$ is called elementary if either $|\Lambda(G)|\le2$
or $G$ fixes a point at infinity. Let $X$ be a proper CAT(0) space,
and let $G\le\Isom(X)$ be a non-elementary subgroup which contains
a rank one element, then the set of pairs of fixed points in $\partial X$
of rank one elements is dense in the complement of the diagonal $\Delta$
of $\Lambda(G)\times\Lambda(G)$ (see \cite{Ham09}).

Let $X$ be a CAT(0) cube complex. Let $\Hs$ be the set of all half-spaces
of $X$, and let $\Hhat$ be the set of corresponding hyperplanes.
We denote by $\hat{}:\Hs\to\Hhat$ the natural map mapping each half-space
to its bounding hyperplane, and by $^{*}:\Hs\to\Hs$ the map sending
a half-space to its complementary half-space. Recall the following
definitions from \cite{CaSa11}. We say that $X$ is \emph{irreducible}
if it cannot be expressed as a (non-trivial) product. We say that
$X$ is \emph{essential} if every half-space $\hhs\in\Hs$ contains
points arbitrarily far from $\hhat$. Let $G\le\Aut(X)$ a group of
automorphisms of $X$. We say that $X$ is $G$-\emph{essential} if
every half-space of $X$ contains $G$-orbit points arbitrarily far
from its bounding hyperplane. In \cite{CaSa11}, Caprace
and Sageev proved the following rank rigidity result. 
\begin{thm*}[Rank Rigidity for CAT(0) cube complexes, \cite{CaSa11}]
If $X$ is a finite-dimensional irreducible CAT(0) cube complex,
and $G\le\Aut(X)$ acts essentially on $X$ without fixed points at
infinity, then $G$ contains a rank one isometry.
\end{thm*}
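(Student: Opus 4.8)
The plan is to construct a single hyperbolic element of $G$ whose translation axes bound no flat half-plane, working throughout with the combinatorial structure of hyperplanes and half-spaces rather than with the CAT(0) metric directly. The organizing notion is a \emph{strongly separated} pair of hyperplanes, i.e. two disjoint hyperplanes $\hhat,\khat$ such that no hyperplane of $X$ crosses both. The proof then splits into three stages: (i) use $G$-essentiality and the absence of a fixed point at infinity to manufacture ``flipping'' and then ``skewering'' automorphisms; (ii) use irreducibility to produce a strongly separated pair of hyperplanes; (iii) skewer that pair by a hyperbolic element and show that strong separation forces it to be rank one.

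For stage (i) I would first establish the Flipping Lemma: since $X$ is $G$-essential and $G$ fixes no point of $\partial X$, every half-space $\hhs$ admits $g\in G$ with $g\hhs^{*}\subsetneq\hhs$. The mechanism is that $G$-essentiality supplies orbit points arbitrarily deep inside $\hhs$, while ``no fixed point at infinity'' prevents $G$ from uniformly leaning toward the complementary side; concretely, failure of flipping for some $\hhs$ is converted, via a Dilworth/Ramsey-type argument on nested families of half-spaces (finite dimensionality bounding the antichains), into a $G$-fixed point of $\partial X$. Iterating — flip the outer wall, then correct with one further flip — yields the Double Skewering Lemma: for any properly nested pair $\hhs\subsetneq\khs$ there is $g\in G$ with $g\khs\subsetneq\hhs$. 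Then $\ldots\subsetneq g^{2}\hhs\subsetneq g^{2}\khs\subsetneq g\hhs\subsetneq g\khs\subsetneq\hhs\subsetneq\khs\subsetneq g^{-1}\hhs\subsetneq\ldots$ is a bi-infinite strictly descending chain of half-spaces, so $g$ is hyperbolic and every axis of $g$ crosses both $\hhat$ and $\khat$ and all their $g$-translates.

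Stage (ii) is the heart of the argument and the step I expect to be the main obstacle. Argue by contradiction: if no two hyperplanes of $X$ are strongly separated, I want to partition the hyperplane set into two non-empty families $\mathcal{A}$ and $\mathcal{B}$ with every member of $\mathcal{A}$ transverse to every member of $\mathcal{B}$; by the standard product-decomposition criterion for cube complexes this produces a non-trivial splitting $X\cong X_{1}\times X_{2}$, contradicting irreducibility. The substance is the dichotomy ``strongly separated pair, or product'': from the hypothesis that every disjoint pair of hyperplanes has a common transversal, and using finite dimensionality together with $G$-essentiality and the absence of a fixed point at infinity, one analyses how chains of nested half-spaces interleave — equivalently the structure of the Tits/Roller boundary, which in the absence of strongly separated walls turns out to be a non-trivial join — and extracts the transversality-respecting partition. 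This is exactly the technical core of the Caprace--Sageev machinery \cite{CaSa11}, and I expect it to absorb most of the work; finite dimensionality is what keeps the decomposition finite and honest.

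For stage (iii), a strongly separated pair is in particular disjoint, so its bounding half-spaces may be labelled $\hhs\subsetneq\khs$; applying the Double Skewering Lemma gives a hyperbolic $g\in G$ with $g\khs\subsetneq\hhs$, and every axis of $g$ crosses both $\hhat$ and $\khat$. It remains to rule out a flat half-plane along an axis. Since flats, and flat half-planes, in a CAT(0) cube complex lie within bounded Hausdorff distance of combinatorial ones, such a half-plane would produce a square-tiled half-plane subcomplex whose boundary is a combinatorial axis $\ell$ of $g$; but then $\hhat$ and $\khat$ are dual to edges of $\ell$, and the hyperplane parallel to $\ell$ dual to the first row of squares crosses every hyperplane that $\ell$ crosses — in particular it crosses both $\hhat$ and $\khat$, contradicting that $\{\hhat,\khat\}$ is strongly separated. (Equivalently, strong separation forces a thin bridge between the two walls, making $g$ contracting.) Since $X$ is proper and finite-dimensional, this combinatorial rank-one property agrees with the metric one in the statement, so $g$ is the required rank one isometry. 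The fragile points are the precise hypotheses behind the Flipping Lemma and, above all, the ``strongly separated or product'' dichotomy of stage (ii), which is where the genuine difficulty lies.
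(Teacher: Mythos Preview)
The paper does not prove this statement at all: it is quoted verbatim as a result of Caprace--Sageev \cite{CaSa11} and used as a black box in the appendix (to supply rank one elements inside $G^{+}$ and then inside $N$ in the proof of Theorem~\ref{thm:Simplicity}). There is therefore no ``paper's own proof'' to compare your proposal against.

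For what it is worth, your outline is a faithful sketch of the Caprace--Sageev argument itself: the Flipping Lemma, Double Skewering, the ``strongly separated pair or product'' dichotomy (their Proposition~5.1, also quoted here as Proposition~\ref{strongly separated hyperplane}), and the fact that a skewering element for a strongly separated pair is rank one. So your proposal is correct as a proof of the cited theorem, but it is not something the present paper attempts; the paper simply invokes \cite{CaSa11}.
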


\begin{defn}
Let $X$ and $G$ be as above. We denote by $G^{+}$ the subgroup
of $G$ generated by the fixators of half-spaces of $X$, i.e.,
\[
G^{+}=\left\langle g\in Fix_{G}(\mathfrak{h})\,\middle|\,\mathfrak{h}\in\mathcal{H}(X)\right\rangle 
\]

We recall from \cite{Tit70,HaPa98} that the action of
$G$ on $X$ satisfies \emph{property (P)} if for every nested sequence
of half-spaces $(\mathfrak{h}_{n})_{n\in\mathbb{Z}}\subset\mathcal{H}$,
$\mathfrak{h}_{n+1}\subset\mathfrak{h}_{n}$, the following map is
an isomorphism:

\begin{equation}
\Fix_{G}\left(\bigcup_{n}\hat{\mathfrak{h}}_{n}\right)\to\prod_{n}\Fix_{G}\left(\hat{\mathfrak{h}}_{n}\cup\hat{\mathfrak{h}}_{n+1}\right)|_{\mathfrak{h}_{n}\cap\mathfrak{h}_{n+1}^{*}}\label{eq: property P}
\end{equation}

Where $\Fix_{G}\left(\hat{\mathfrak{h}}_{n}\cup\hat{\mathfrak{h}}_{n+1}\right)|_{\mathfrak{h}_{n}\cup\mathfrak{h}_{n+1}^{*}}$
is the image of $\Fix_{G}\left(\hat{\mathfrak{h}}_{n}\cup\hat{\mathfrak{h}}_{n+1}\right)$
in $\Aut\left(\mathfrak{h}_{n}\cap\mathfrak{h}_{n+1}^{*}\right)$
under the restriction map.\end{defn}
\begin{rem*}
Note that $G=\Aut(X)$ satisfies property (P), and in this case $G^{+}$
is generated by the fixators of the carriers of hyperplanes.
\end{rem*}
We prove the following.
\begin{thm}
\label{thm:Simplicity}Let $X$ be a proper finite-dimensional irreducible
CAT(0) cube complex, and let $G\le\Aut(X)$ be a non-elementary group
acting essentially on $X$ with property (P) and $\Lambda(G)=\partial X$.
Then for all $N\vartriangleleft G^{+}$ either $N=G^{+}$ or $N$
acts trivially on $\partial X$. In particular, if we further assume
that $G$ acts faithfully on $\partial X$, then $G^{+}$ is either
simple or trivial.
\end{thm}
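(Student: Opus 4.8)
Theorem \ref{thm:Simplicity} is the classical "simplicity via contraction groups" argument adapted to the cube-complex setting, so the plan is to follow the Tits/Haglund–Paulin strategy, using the rank rigidity theorem of Caprace–Sageev to produce rank one isometries where Tits uses hyperbolic tree automorphisms.

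First I would fix a nontrivial normal subgroup $N \vartriangleleft G^{+}$ that does not act trivially on $\partial X$, and aim to show $N = G^{+}$. Since $G$ acts essentially, is non-elementary, has property (P) and $\Lambda(G) = \partial X$, I would first verify that $G^{+}$ itself is non-elementary and acts essentially: property (P) together with essentiality forces $G^{+}$ to move hyperplanes around enough that it cannot fix a point at infinity or have a limit set of at most two points (here one uses that fixators of halfspaces, applied to nested sequences, generate "hyperbolic-like" behaviour). Then by the Rank Rigidity Theorem applied to $G^{+}$ (note $X$ is finite-dimensional and irreducible), $G^{+}$ contains a rank one isometry $g$. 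The key structural fact I would invoke is that a rank one isometry of a CAT(0) cube complex "skewers" a nested bi-infinite sequence of halfspaces $(\hhs_n)_{n\in\mathbb{Z}}$ with $g\hhs_n = \hhs_{n+1}$, along its axis, and acts on $\partial X$ with north–south dynamics with attracting/repelling fixed points $g^{\pm}$.

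The heart of the argument is a commutator/contraction computation. I would take an element $h \in N$ with $h \cdot \xi \neq \xi$ for some $\xi \in \partial X$; combining $h$ with a rank one $g \in G^{+}$ (after conjugating $g$ so that its axis endpoints are in "general position" with respect to $h$ — possible since by Hamann's density statement quoted in the excerpt the rank one fixed-point pairs are dense in $\Lambda(G)\times\Lambda(G)\setminus\Delta$, and $\Lambda(G) = \partial X$), the commutator $[h, g^{n}] = h (g^{n} h^{-1} g^{-n})$ lies in $N$ for all $n$, and as $n \to \infty$ the element $g^n h^{-1} g^{-n}$ contracts toward supporting a fixator of a deep halfspace along the axis of $g$. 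Concretely, for $n$ large, $g^n h^{-1} g^{-n}$ fixes the hyperplane $\hat\hhs_{n}$ and acts trivially on one side, so the commutator $[h,g^n]$ fixes a halfspace of $X$; hence $N$ contains a nontrivial fixator of a halfspace. Property (P) is exactly what lets one verify that this fixator is nontrivial and "localized": it guarantees $\Fix_G(\bigcup \hat\hhs_n)$ decomposes as the product of the local pieces, so that the limiting element is genuinely supported on one halfspace rather than vanishing.

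Once $N$ contains one nontrivial element fixing some halfspace $\hhs$, I would propagate: $N$ is normal in $G^{+}$, and $G^{+}$ is generated by (and transitive enough on) halfspace-fixators, so conjugates of this element give nontrivial fixators of a large family of halfspaces. Using property (P) again, together with the rank one dynamics to move any halfspace to any other by elements of $G^{+}$ (this is where $\Lambda(G)=\partial X$ and essentiality are used: every hyperplane is "skewered" by a rank one element and these are in $G^{+}$), I would conclude that $N$ contains $\Fix_{G^{+}}(\hhs)$ for every halfspace $\hhs$, hence $N = G^{+}$ by definition of $G^{+}$. The final sentence of the theorem is then immediate: if $G$ (hence $G^{+}$) acts faithfully on $\partial X$, then "$N$ acts trivially on $\partial X$" forces $N = 1$, so $G^{+}$ is simple or trivial.

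The main obstacle I anticipate is the contraction step: making precise that $g^n h^{-1} g^{-n}$ converges to something that fixes a halfspace, and that the resulting element of $N$ is nontrivial. This requires care because a rank one isometry of a cube complex need not translate along a combinatorial geodesic of edges — its axis may be a CAT(0) geodesic cutting through cubes — so one must pass from the metric north–south dynamics on $\partial X$ to a genuinely combinatorial statement about halfspaces. The right tool is the correspondence between points of $\partial X$ and ultrafilters on $\Hs$, and the fact (used implicitly via property (P)) that a rank one axis determines an honest nested bi-infinite chain of halfspaces on which $g$ acts by shift; controlling the "width" of the contraction and ensuring non-triviality of the limit is where property (P) does the real work, exactly as in \cite{Tit70,HaPa98}.
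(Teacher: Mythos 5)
Your overall frame (Tits/Haglund--Paulin plus rank rigidity, with property (P) doing the real work) matches the paper, but the step you call the heart of the argument is not correct as stated, and it is not the mechanism the paper uses. There is no reason that $g^{n}h^{-1}g^{-n}$ should, for large $n$, fix a hyperplane and act trivially on one side: conjugation by $g^{n}$ merely pushes the support of $h^{-1}$ toward the attracting fixed point in a metric/dynamical sense, and north--south dynamics on $\partial X$ does not convert into an element literally fixing a halfspace; no limit of the commutators $[h,g^{n}]$ is taken or can be taken inside the discrete-looking group $G^{+}$. Moreover, even if this contraction produced one nontrivial element of $N$ fixing a halfspace, your propagation step would not close the argument: $G^{+}$ is generated by the \emph{full} fixators $\Fix_{G}(\hhs)$, and conjugating a single element around (there is no transitivity on halfspaces among the hypotheses) never yields $\Fix_{G}(\hhs)\subset N$ for every $\hhs$, which is what $N=G^{+}$ requires.

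The paper's route is different at exactly these points. First, it applies Proposition \ref{pro:boundary of a normal subgroup} and rank rigidity twice --- to $G^{+}\vartriangleleft G$ and then to $N\vartriangleleft G^{+}$ --- to conclude that $N$ itself is non-elementary with $\Lambda(N)=\partial X$ and contains rank one isometries; the rank one element $g$ must lie in $N$, not merely in $G^{+}$. Second, given an arbitrary halfspace $\hhs$, it uses strongly separated hyperplanes (Proposition \ref{strongly separated hyperplane}) and Hamann's density of rank-one fixed-point pairs to choose $g\in N$ whose axis endpoints both lie in $\khs_{\infty}$ for some $\khs\subset\hhs$ strongly separated from $\hhat$; skewering then gives $\mathfrak{l}$ with $g\mathfrak{l}\subset\mathfrak{l}$ and $\bigcup_{n}g^{n}\hat{\mathfrak{l}}\subset\hhs$, so that $\Fix_{G}(\hhs)\subset F:=\Fix_{G}\bigl(\bigcup_{n}g^{n}\hat{\mathfrak{l}}\bigr)$. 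Third, the commutator step is Lemma \ref{lem:F=00003D[g,F]} (Lemme 6.4 of \cite{HaPa98}): property (P) allows one to build, for each $f\in F$, an $f'\in F$ with $f=[g,f']$ by an inductive definition on the strips $g^{n}\mathfrak{l}\cap g^{n+1}\mathfrak{l}^{*}$, so $F=[g,F]\subset N$ because $g\in N$ and $N\vartriangleleft G^{+}$. This captures the \emph{entire} fixator of every halfspace at once, which is what your proposal is missing; you should replace the contraction heuristic by this lemma and add the two applications of the limit-set proposition and the strong-separation localization.
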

By the remark above, we deduce:
\begin{cor}
\label{Simplicity of Aut(X)}Let $X$ be a proper finite-dimensional
irreducible essential CAT(0) cube complex with co-compact, non-elementary
$\Aut(X)$ action. Then for all $N\vartriangleleft\Aut^{+}(X)$ either
$N=\Aut^{+}(X)$ or $N$ acts trivially on $\partial X$. In particular,
if we further assume that $\Aut(X)$ acts faithfully on $\partial X$,
then $\Aut^{+}(X)$ is either simple or trivial.
\end{cor}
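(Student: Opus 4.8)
The plan is to apply Theorem~\ref{thm:Simplicity} with $G=\Aut(X)$; here $G^{+}$ is by definition exactly $\Aut^{+}(X)$, so the entire content of the corollary is the verification that, under its hypotheses, the hypotheses of Theorem~\ref{thm:Simplicity} are met. Three of these are immediate: $X$ is a proper finite-dimensional irreducible CAT(0) cube complex and $G=\Aut(X)$ is non-elementary, both by assumption; and $G=\Aut(X)$ satisfies property (P) by the remark immediately preceding Theorem~\ref{thm:Simplicity}. What remains is to check that $G$ acts essentially on $X$ and that $\Lambda(G)=\partial X$, and both of these follow from cocompactness of the $\Aut(X)$-action.

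For essentiality of the action, fix a basepoint $x_{0}$ and let $D$ be a cocompactness constant, so every point of $X$ lies within distance $D$ of the orbit $G x_{0}$. Since $X$ is essential, every halfspace $\hhs$ contains points arbitrarily far from $\hhat$; a point of $\hhs$ at distance $\ge R+D$ from $\hhat$ then yields a point of $G x_{0}\cap\hhs$ at distance $\ge R$ from $\hhat$, so $X$ is $G$-essential, which is what "acts essentially" means. For the limit set, coarse density of $G x_{0}$ in $X$ implies that any geodesic ray issuing from $x_{0}$ passes within distance $D$ of infinitely many points of $G x_{0}$, hence its endpoint is an accumulation point of $G x_{0}$; since every point of $\partial X$ is the endpoint of such a ray, $\Lambda(G)=\partial X$.

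With all hypotheses verified, Theorem~\ref{thm:Simplicity} applies and gives that for every $N\vartriangleleft\Aut^{+}(X)$ either $N=\Aut^{+}(X)$ or $N$ acts trivially on $\partial X$. If in addition $\Aut(X)$ acts faithfully on $\partial X$, then any $N\vartriangleleft\Aut^{+}(X)$ acting trivially on $\partial X$ must be trivial (as $N\le\Aut(X)$ and the latter acts faithfully on $\partial X$), so $\Aut^{+}(X)$ admits no proper nontrivial normal subgroup, i.e.\ it is simple or trivial. I do not expect a genuine obstacle at this stage: the corollary is a direct invocation of Theorem~\ref{thm:Simplicity}, and the only mildly technical points — that an essential complex with a cocompact automorphism group is automatically $G$-essential, and that a cocompact action on a proper CAT(0) space has full limit set — are standard, and are essentially the reason the preceding remark was recorded. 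The real difficulty lies in Theorem~\ref{thm:Simplicity} itself, not in this deduction.
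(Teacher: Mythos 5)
Your proposal is correct and follows the paper's own route: the corollary is deduced by applying Theorem~\ref{thm:Simplicity} to $G=\Aut(X)$, using the preceding remark that $\Aut(X)$ satisfies property (P). The only difference is that you spell out the standard verifications the paper leaves implicit — that essentiality of $X$ plus cocompactness gives $G$-essentiality, and that cocompactness on a proper space gives $\Lambda(G)=\partial X$ — and these are carried out correctly.
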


\begin{defn}
For $\mathfrak{h}\in\mathcal{H}(X)$ let 
\[
\mathfrak{h}_{\infty}=\left\{ \xi\in\partial X\,|\, r\cap\mathfrak{h}\ne\emptyset,\forall\mbox{geodesic rays }r\mbox{ such that }r(\infty)=\xi\right\} 
\]

It can also be defined as the collection of points in $\partial X$
that are not accumulation of $\hhs^{*}$. And for $\hat{\mathfrak{h}}\in\mathcal{H}(X)$
let $\hat{\mathfrak{h}}_{\infty}$ be the set of accumulation points
of $\hat{\mathfrak{h}}$ in $\partial X$.

Recall from \cite{BeCh12} that two hyperplanes $\hhat,\khat$
are \emph{strongly separated }if there is no hyperplane which intersects
both of them.\end{defn}
\begin{prop}[Proposition 5.1 in \cite{CaSa11}]
\label{strongly separated hyperplane}Under the same assumptions
as in Theorem \ref{thm:Simplicity}, for all $\hhs\in\Hs$ there exists
a half-space $\khs\subset\hhs$ such that $\hhat$ and $\khat$ are strongly
separated. In particular, the open set $\hhs_{\infty}$ is non-empty.
\end{prop}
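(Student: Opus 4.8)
The plan is to deduce the statement from the Rank Rigidity Theorem quoted above together with the dynamics of rank one isometries. First I would apply rank rigidity: $X$ is finite--dimensional and irreducible, $G$ acts essentially, and $G$ is non-elementary and hence fixes no point at infinity, so $G$ contains a rank one isometry $g$. The structural input I would then invoke, due to Behrstock--Charney \cite{BeCh12} and used in \cite{CaSa11}, is that after replacing $g$ by a suitable power one may assume $g$ skewers a pair of strongly separated hyperplanes: there are half-spaces $\mathfrak{a}\subsetneq\mathfrak{b}$ with $\hat{\mathfrak{a}}$ and $\hat{\mathfrak{b}}$ strongly separated and $g\mathfrak{b}\subsetneq\mathfrak{a}$. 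Then $g\mathfrak{b}\subsetneq\mathfrak{a}\subsetneq\mathfrak{b}$, so $(g^{n}\mathfrak{b})_{n\ge0}$ is a descending chain of half-spaces whose consecutive terms are strongly separated (strong separation being $\Aut(X)$-invariant), and the attracting fixed point $\xi^{+}$ of $g$ lies in $(g^{n}\mathfrak{b})_{\infty}$ for every $n$, since the limit at infinity of such a chain is a single point.

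Now fix $\hhs\in\Hs$. I claim it suffices to produce $f\in G$ with $f\mathfrak{b}\subsetneq\hhs$ a proper nesting. Granting this, set $\khs:=f\mathfrak{a}$; then $\khs\subsetneq f\mathfrak{b}\subsetneq\hhs$ and, by invariance of strong separation, $\khat=f\hat{\mathfrak{a}}$ and $f\hat{\mathfrak{b}}$ are strongly separated. If some hyperplane $\hat m$ were to cross both $\khat$ and $\hhat$, then $\hat m$ would meet $f\hat{\mathfrak{a}}$, which lies in $f\mathfrak{b}$, and would also meet $\hhs^{*}$, which lies in $(f\mathfrak{b})^{*}$; hence $\hat m$ would cross $f\hat{\mathfrak{b}}$ as well, contradicting the strong separation of $f\hat{\mathfrak{a}}$ and $f\hat{\mathfrak{b}}$. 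So $\hhat$ and $\khat$ are strongly separated, as required. To produce $f$ I would use that $G$ acts essentially and $\Lambda(G)=\partial X$ — so in particular $G$ acts minimally on $\partial X$ and, by essentiality, $\hhs$ contains orbit points arbitrarily far from $\hhat$, yielding a proper nesting $\mathfrak{c}\subsetneq\hhs$ — together with the Flipping Lemma and Double Skewering Lemma of \cite{CaSa11}, which hold under precisely these hypotheses: chaining these one moves the half-space $\mathfrak{b}$ strictly inside $\hhs$, and composing with a large power of $g$ (whose $g^{n}\mathfrak{b}$ shrink toward $\xi^{+}$) one makes the nesting proper. I expect this step — landing a translate of $\mathfrak{b}$ \emph{strictly} inside an arbitrary half-space using only essentiality and $\Lambda(G)=\partial X$, without any cocompactness hypothesis — to be the main obstacle.

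Finally, the ``in particular'' follows by iteration: having produced for every half-space a properly nested, strongly separated sub-half-space, I obtain a descending chain $\hhs=\mathfrak{h}_{0}\supsetneq\mathfrak{h}_{1}\supsetneq\cdots$ of half-spaces with consecutive terms strongly separated; its limit at infinity $\bigcap_{n}(\mathfrak{h}_{n})_{\infty}$ is then a single point, which lies in $\hhs_{\infty}$, so the latter is non-empty. (Alternatively one argues directly that a geodesic ray running deep into $\khs$ converges to some $\xi\in\partial X$, and that strong separation of $\hhat$ and $\khat$ together with convexity of the CAT(0) metric forces every geodesic ray asymptotic to $\xi$ to be eventually contained in $\hhs$, whence $\xi\in\hhs_{\infty}$.)
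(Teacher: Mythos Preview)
The paper does not prove this proposition; it is quoted directly from Caprace--Sageev \cite{CaSa11} (their Proposition~5.1), so there is no in-paper proof to compare against. Your sketch is in the spirit of the Caprace--Sageev argument: existence of strongly separated hyperplanes from irreducibility, then moving them with the Flipping and Double Skewering Lemmas. Your reduction step---showing that if $f\mathfrak{b}\subseteq\hhs$ with $f\hat{\mathfrak a},f\hat{\mathfrak b}$ strongly separated then $\khs:=f\mathfrak a$ works---is correct (and in fact does not even require the nesting to be proper).

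The obstacle you flag is real but routine once you have the Caprace--Sageev toolkit, which applies verbatim under the hypotheses of Theorem~\ref{thm:Simplicity} (essential action, no fixed point at infinity): given $\hhs$ and the pair $\mathfrak a\subsetneq\mathfrak b$, compare $\hat{\mathfrak b}$ with $\hhat$; if $\hat{\mathfrak b}$ crosses $\hhat$ then by strong separation $\hat{\mathfrak a}$ does not, and one reduces to the nested/disjoint cases for $\mathfrak a$ (or $\mathfrak a^{*}$) in place of $\mathfrak b$; in the nested cases Double Skewering gives $\gamma\mathfrak b\subsetneq\hhs$ directly, and in the disjoint case the Flipping Lemma applied to a halfspace containing $\mathfrak b$ and contained in $\hhs^{*}$ brings $\mathfrak b$ inside $\hhs$. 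No cocompactness is needed. For the ``in particular'', your descending-chain argument is fine, but note that the sets $(\hhs_{n})_{\infty}$ are open, so to invoke compactness of $\partial X$ you should observe that strong separation of $\hhat_{n},\hhat_{n+1}$ forces $\hhat_{n+1,\infty}=\emptyset$ and hence $\overline{(\hhs_{n+1})_{\infty}}\subseteq(\hhs_{n})_{\infty}$; your parenthetical alternative is the cleaner route.
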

We remark that under mild assumptions the action of $\Aut(X)$ on
$\partial X$ is faithful, as the following claim shows.
\begin{claim}
\label{criteria for faithful action}Under the same assumptions as
in Theorem \ref{thm:Simplicity}, assume moreover that the intersection
of any pair of crossing halfspaces $\hhs_{1},\hhs_{2}$ contains a halspace
$\khs\subset\hhs_{1}\cap\hhs_{2}$. Then $\Aut(X)$ acts faithfully on $\partial X$.\end{claim}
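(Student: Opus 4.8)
The plan is to show that any $g\in\Aut(X)$ acting trivially on $\partial X$ is the identity. Since a vertex of $X$ is determined by the collection of halfspaces containing it and $g$ is a combinatorial automorphism, it suffices to prove that $g$ fixes every halfspace; so I suppose for contradiction that $g\hhs\neq\hhs$ for some $\hhs\in\Hs$ and put $\khs:=g\hhs$. Because $g$ fixes $\partial X$ pointwise we have $\khs_\infty=(g\hhs)_\infty=g(\hhs_\infty)=\hhs_\infty$. I will use the following elementary properties of the sets $\mathfrak{m}_\infty$, all immediate from the two descriptions given before Proposition~\ref{strongly separated hyperplane}: every $\mathfrak{m}_\infty$ is non-empty (Proposition~\ref{strongly separated hyperplane}); $\mathfrak{m}\subseteq\mathfrak{n}$ implies $\mathfrak{m}_\infty\subseteq\mathfrak{n}_\infty$; and $\mathfrak{m}_\infty\cap\mathfrak{m}^*_\infty=\emptyset$ while every point of $\partial X$ accumulates on $\mathfrak{m}$ or on $\mathfrak{m}^*$, so in particular every point of $\mathfrak{m}_\infty$ accumulates on $\mathfrak{m}$.

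Suppose first that the hyperplanes $\hhat$ and $\khat$ are transverse. This is where the extra hypothesis enters: $\hhs$ and $\khs^*$ are a crossing pair, so $\hhs\cap\khs^*$ contains a halfspace $\mathfrak{m}$. Then $\mathfrak{m}_\infty\neq\emptyset$, and $\mathfrak{m}_\infty\subseteq\hhs_\infty$ since $\mathfrak{m}\subseteq\hhs$, while $\mathfrak{m}_\infty\cap\khs_\infty=\emptyset$ since a point of $\mathfrak{m}_\infty$ does not accumulate on $\mathfrak{m}^*\supseteq\khs$ whereas every point of $\khs_\infty$ does accumulate on $\khs$. Hence $\emptyset\neq\mathfrak{m}_\infty\subseteq\hhs_\infty\setminus\khs_\infty=\hhs_\infty\setminus\hhs_\infty=\emptyset$, a contradiction. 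If instead $\hhat$ and $\khat$ are not transverse and $\khs\subseteq\hhs^*$ or $\khs^*\subseteq\hhs$ (in particular if $\khs=\hhs^*$), then combining $\khs_\infty=\hhs_\infty$ (hence also $\khs^*_\infty=\hhs^*_\infty$) with monotonicity and disjointness $\hhs_\infty\cap\hhs^*_\infty=\emptyset$ forces $\hhs_\infty$ or $\hhs^*_\infty$ to be empty, contradicting Proposition~\ref{strongly separated hyperplane}.

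The remaining case -- the main obstacle -- is that $\hhat$ and $\khat$ are nested; replacing $g$ by $g^{-1}$ if necessary we may assume $g\hhs\subsetneq\hhs$. A standard computation (for a vertex $v$ of $\hhs\setminus g\hhs$, the $n$ distinct hyperplanes $g\hhat,\dots,g^n\hhat$ all separate $v$ from $g^nv$, so $d(v,g^nv)\geq n$) shows that $g$ has unbounded orbits; since automorphisms of a finite-dimensional cube complex are semisimple, $g$ is a hyperbolic isometry with an axis. Now invoke the Rank Rigidity Theorem: $G$ acts essentially on the finite-dimensional irreducible $X$ without a fixed point at infinity, so $G\subseteq\Aut(X)$ contains a rank one isometry $h$, with endpoints $h_\pm\in\partial X$. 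Since $g$ fixes $h_+$ and $h_-$, it preserves the parallel set $P$ of the rank one axis $\gamma_h$, i.e. the union of all geodesic lines with endpoints $\{h_-,h_+\}$; writing $P\cong\mathbb{R}\times Y$, rank one-ness of $\gamma_h$ forces $Y$ to be bounded (a ray in $Y$ would produce a flat half-plane bounded by $\gamma_h$). As $g$ fixes the two ends of the $\mathbb{R}$-factor it acts on $P$ as $(t,y)\mapsto(t+c,\psi y)$, and $c\neq0$ because $g$ is hyperbolic (otherwise $g$ has bounded orbits in $P$); choosing a fixed point $y_0$ of the isometry $\psi$ of the bounded CAT(0) space $Y$, the line $\mathbb{R}\times\{y_0\}$ is an axis of $g$ parallel to $\gamma_h$, so $g$ is itself rank one with fixed points $h_\pm$. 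But a rank one isometry of a proper CAT(0) cube complex acts on $\partial X$ with north--south dynamics, hence moves every point of $\partial X\setminus\{h_+,h_-\}$, and such points exist because $|\partial X|=|\Lambda(G)|\geq 3$ ($G$ being non-elementary). This contradicts $g|_{\partial X}=\id$, so $g$ fixes every halfspace and $g=\id$.

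The delicate point is the last case: transverse pairs are handled cleanly by the extra hypothesis together with Proposition~\ref{strongly separated hyperplane}, but excluding a $g$ that skewers a hyperplane seems to require genuinely promoting $g$ to a rank one isometry through the parallel set of a rank one element of the ambient group -- which is where the hypotheses $\Lambda(G)=\partial X$ (both to produce a rank one element and to produce a point of $\partial X$ outside $\{h_+,h_-\}$) are really used. In writing up I would want to verify carefully the standard CAT(0) facts invoked there: semisimplicity of cube-complex automorphisms, the product decomposition of a parallel set and boundedness of its fibre in the rank one case, and the north--south dynamics of rank one isometries recalled in the introduction to this appendix.
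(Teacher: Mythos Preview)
Your argument is correct. The transverse case and the ``opposite orientation'' cases ($\khs\subseteq\hhs^*$ or $\khs^*\subseteq\hhs$) match the paper's Cases~3 and~1 essentially verbatim. The genuine divergence is in the nested case $g\hhs\subsetneq\hhs$. The paper keeps this case purely combinatorial: it chooses a hyperplane $\hat{\mathfrak{l}}$ transverse to $\hhat$, uses the extra hypothesis to place a halfspace $\khs\subset\mathfrak{l}\cap\hhs$, and then---via a short dichotomy on whether the orbit $\{g^n\hhat\}$ eventually crosses $\khat$---arranges (possibly after one more application of the hypothesis) that $\khs$ lies in a single slab $g^{r-1}\hhs\cap g^r\hhs^*$, so that $\khs_\infty$ and $g\khs_\infty$ are nonempty and disjoint. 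You instead promote $g$ to a rank one isometry: unbounded orbits plus semisimplicity make $g$ hyperbolic, Rank Rigidity supplies a rank one $h\in G$, and since $g$ fixes $h_\pm$ it acts by translation on the (bounded-fibre) parallel set of $\gamma_h$, yielding a rank one axis for $g$; north--south dynamics then contradicts $g|_{\partial X}=\id$.

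Both routes are sound. The paper's has the virtue of uniformity---every case is dispatched by the same device of trapping a halfspace in a region whose boundary trace is visibly moved---and uses nothing beyond Proposition~\ref{strongly separated hyperplane} and the extra hypothesis. Your route is heavier (semisimplicity of cellular isometries, the product decomposition of parallel sets, Hamenst\"adt's north--south dynamics), but all of these are already invoked or cited in the appendix, so the cost is expository rather than logical. A small bonus of your approach: it shows that the extra hypothesis on sectors is used \emph{only} in the transverse case, whereas the paper's Case~2 invokes it again.
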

\begin{proof}
Let $1\ne g\in Aut(X)$, and let $\mathfrak{h}\in\mathcal{H}(X)$
be a half-space such that $g\mathfrak{h}\ne\mathfrak{h}$. Then $g$
and $\mathfrak{h}$ satisfy one of the following cases:
\begin{casenv}
\item $g\mathfrak{h}\subset\mathfrak{h}^{*}$. Then $g$ sends the corresponding
$\mathfrak{h}_{\infty}$ into $\mathfrak{h}_{\infty}^{*}$ which are
non-empty and disjoint.
\item $g\mathfrak{h}\subset\mathfrak{h}$. Let $\hat{\mathfrak{l}}\in\hat{\mathcal{H}}(X)$
be a hyperplane transversal to $\hat{\mathfrak{h}}$, (we may assume
without loss of generality that $\forall n\in\mathbb{N}$ $g^{n}\mathfrak{h}\nsubseteq\mathfrak{l}$,
by determining the orientation of $\mathfrak{l}$ so that $g^{m}\mathfrak{h}\nsubseteq\mathfrak{l}$
for the minimal $m$ such that $g^{n}\hat{\mathfrak{h}}\cap\hat{\mathfrak{l}}=\emptyset$).
By our assumption, let $\mathfrak{k}\subset\mathfrak{l}\cap\mathfrak{h}$.
Then either $\forall n\in\mathbb{N},\, g^{n}\hhat\subset\mathfrak{k}^{*}$
or $\exists n\in\mathbb{N},\, g^{n}\hat{\mathfrak{h}}\cap\hat{\mathfrak{k}}\ne\emptyset$.
If $\forall n,\, g^{n}\mathfrak{\hhat}\subset\mathfrak{k}^{*}$ then
there exists $r$ such that $\mathfrak{k}\subset g^{r-1}\mathfrak{h}\cap g^{r}\mathfrak{h}^{*}$,
hence $g$ takes $\mathfrak{k}_{\infty}$ into $g\mathfrak{k}_{\infty}$
(which are disjoint since $\mathfrak{k}_{\infty}\subset g^{r}\mathfrak{h}_{\infty}^{*}$
and $g\mathfrak{k}_{\infty}\subset g^{r}\mathfrak{h}_{\infty}$).
If $\exists n,\, g^{n}\hat{\mathfrak{h}}\cap\hat{\mathfrak{k}}\ne\emptyset$
then we may assume that $n$ is minimal. By our assumption there exists
$\mathfrak{k}_{1}\subset\mathfrak{k}\cap g^{n}\mathfrak{h}^{*}$.
Therefore $\mathfrak{k}_{1}\subset g^{n-1}\mathfrak{h}\cap g^{n}\mathfrak{h}^{*}$,
and $g$ acts non-trivially on $\partial X$ as before.
\item $g\hat{\mathfrak{h}}\cap\hat{\mathfrak{h}}\ne\emptyset$. By our assumption
let $\mathfrak{k}\subset\mathfrak{h}^{*}\cap g\mathfrak{h}$, then
$g$ sends $\mathfrak{k}_{\infty}$ into $g\mathfrak{h}_{\infty}^{*}$.
\end{casenv}
\end{proof}
For completeness we prove the following:
\begin{lem}[Lemme 6.4 in \cite{HaPa98}]
\label{lem:F=00003D[g,F]}Let $g\in Aut(X)$, and $\mathfrak{h}\in\mathcal{H}(X)$
such that $g\mathfrak{h}\subset\mathfrak{h}$, and let $F={\rm Fix}_{G}(\bigcup_{n\in\mathbb{Z}}g^{n}\hat{\mathfrak{h}})$,
then $F=[g,F]$.\end{lem}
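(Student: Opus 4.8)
The plan is to establish the two inclusions $[g,F]\subseteq F$ and $F\subseteq[g,F]$ separately, the second being the substantive one. The inclusion $[g,F]\subseteq F$ is immediate: if $f\in F$ then $gfg^{-1}$ fixes $g^{n+1}\hhat$ pointwise for every $n$, hence $gfg^{-1}\in F$, and therefore $[g,f]=(gfg^{-1})f^{-1}\in F$. For the reverse inclusion I will in fact prove the stronger statement that every $f\in F$ is a \emph{single} commutator $[g,f']$ with $f'\in F$, by a telescoping construction across the ``annuli'' cut out by the hyperplanes $g^{n}\hhat$.

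Write $\hhs_n:=g^{n}\hhs$, so that $\hhs_0\supsetneq\hhs_1\supsetneq\cdots$ with $g\hhs_n=\hhs_{n+1}$, put $\hhat_n:=g^{n}\hhat$ and $A_n:=\hhs_n\cap\hhs_{n+1}^*$, so that $gA_n=A_{n+1}$ and $\hhat_n$ is the common bounding hyperplane of the consecutive regions $A_{n-1}$ and $A_n$. Two structural facts are needed. First, $X=\bigcup_{n\in\mathbb Z}\overline{A_n}$, with $\overline{A_{n-1}}\cap\overline{A_n}=\hhat_n$ and $\overline{A_m}\cap\overline{A_n}=\emptyset$ for $|m-n|\ge2$: a vertex lying in $\hhs_n$ for all $n\in\mathbb Z$ would be separated from a fixed vertex of $\hhs_0^*$ by the infinitely many distinct hyperplanes $\hhat_0,\hhat_1,\dots$, which is impossible, and symmetrically a vertex cannot lie in $\hhs_n^*$ for all $n$; every other vertex lies in some $A_n$ because $\{n:v\in\hhs_n\}$ is downward closed in $\mathbb Z$. (This is the only place that uses that $g\hhs$ is contained \emph{properly} in $\hhs$.) Second, every $f\in F$ preserves each $\hhs_n$ (rather than sending it to $\hhs_n^*$): since $f$ fixes every $\hhat_n$ it permutes $\{\hhs_n,\hhs_n^*:n\in\mathbb Z\}$ preserving inclusions, and $f\hhs_n=\hhs_n^*$ would force $f\hhs_{n+1}$ to be a proper subset of $\hhs_n^*$, which is satisfied by neither $\hhs_{n+1}$ nor $\hhs_{n+1}^*$. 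Hence each $f\in F$ preserves every $A_n$ and every $\hhat_n$ setwise.

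Now fix $f\in F$, set $f'|_{A_0}:=\id$, and define for all $n\in\mathbb Z$
$$ f'|_{A_n}\;:=\;(f|_{A_n})^{-1}\circ g\circ(f'|_{A_{n-1}})\circ g^{-1}, $$
where $g\colon A_{n-1}\to A_n$ is viewed as an isomorphism; read forwards this defines $f'|_{A_n}$ for $n\ge1$, and read backwards it defines $f'|_{A_n}$ for $n\le-1$, each expressible as a finite composition of restrictions of $f$, its inverse, and their $g$-conjugates, hence as an automorphism of $\overline{A_n}$. An easy induction using $f\in F$ shows that each $f'|_{A_n}$ restricts to the identity on both bounding hyperplanes $\hhat_n$ and $\hhat_{n+1}$; therefore the $f'|_{\overline{A_n}}$ agree on their overlaps and glue to an automorphism $f'$ of $X$ lying in $F$ --- the legitimacy of this gluing is exactly the surjectivity half of property (P), which is automatic when $G=\Aut(X)$. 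The defining recursion is precisely the identity $(ff')|_{A_n}=(gf'g^{-1})|_{A_n}$ for every $n$, and both $ff'$ and $gf'g^{-1}$ restrict to the identity on every $\hhat_n$; since the $\overline{A_n}$ cover $X$ we conclude $ff'=gf'g^{-1}$ globally, i.e.\ $f=gf'g^{-1}(f')^{-1}=[g,f']$, as required.

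The only genuinely geometric input, and the step I expect to need the most care, is the exhaustion $X=\bigcup_n\overline{A_n}$ together with the fact that consecutive annuli meet only along a single hyperplane: this is what rules out ``trapped'' vertices in an end of $X$, on which the equation $ff'=gf'g^{-1}$ would degenerate to an unsolvable single-commutator equation and destroy the telescoping. A secondary bookkeeping point is checking that the pieces $f'|_{\overline{A_n}}$ really do assemble to an element of $F$ and not merely to a self-map or to an automorphism outside $G$; for $G=\Aut(X)$ this is clear, and in the general case it is guaranteed by property (P), which holds in all the settings in which this lemma is invoked.
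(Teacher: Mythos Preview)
Your proof is correct and follows essentially the same approach as the paper's: define $f'$ to be the identity on the base annulus $A_0$ and extend recursively across the annuli $A_n=g^n\hhs\cap g^{n+1}\hhs^*$ so that the defining recursion encodes the commutator identity, then invoke property~(P) to assemble the pieces into an element of $F$. The paper's recursion $f'|_{A_n}=gf'fg^{-1}$ solves $f=f'^{-1}g^{-1}f'g$, while yours solves $f=gf'g^{-1}f'^{-1}$; these differ only cosmetically. Your write-up is in fact more careful than the paper's on two points it leaves implicit: that the closed annuli $\overline{A_n}$ exhaust $X$ (so there are no ``trapped'' vertices on which the telescoping would fail), and that each $f\in F$ preserves every $\hhs_n$ rather than swapping it with $\hhs_n^*$ (so $f$ genuinely preserves each $A_n$).
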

\begin{proof}
Clearly $F\supset[g,F]$. Now, let $f\in F.$ we will show that there
exist $f^{\prime}\in F$ such that $f=f^{\prime-1}g^{-1}f^{\prime}g$.

By the property (P) assumption we can define $f^{\prime}$ by its
restrictions to the sets $g^{n}\mathfrak{h}\cap g^{n+1}\mathfrak{h}^{*}$.
We do so by induction on $n$:

For $n=0$, define 
\[
f^{\prime}|_{\mathfrak{h}\cap g\mathfrak{h}^{*}}=id_{\mathfrak{h}\cap g\mathfrak{h}^{*}}
\]

For $n>0$, define 
\[
f^{\prime}|_{g^{n}\mathfrak{h}\cap g^{n+1}\mathfrak{h}^{*}}=gf^{\prime}fg^{-1}
\]

Similarly define $f^{\prime}$ for $n<0$.

Notice that this is well-defined: if $x\in g^{n}\mathfrak{h}\cap g^{n+1}\mathfrak{h}^{*}$,
then $fg^{-1}(x)\in g^{n-1}\mathfrak{h}\cap g^{n}\mathfrak{h}^{*}$,
and therefore $f^{\prime}(f(g(x))$ is defined by the induction hypothesis.
Now observe that $f^{\prime}$ has the desired property.\end{proof}
\begin{prop}
\label{pro:boundary of a normal subgroup}Let $X$ be a proper CAT(0)
space, and let $H\le\Isom(X)$ be non-elementary. Assume $H$ contains
a rank one isometry and acts non-elementary on $X$ with $\Lambda(H)=\partial X$,
and let $N\vartriangleleft H$, then either $N$ acts trivially on
$\partial X$, or $\Lambda(N)=\Lambda(H)=\partial X$ and $N$ is
non-elementary.\end{prop}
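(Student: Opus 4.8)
The plan is to prove Proposition~\ref{pro:boundary of a normal subgroup} by a standard ping-pong/north--south-dynamics argument, exploiting the density of rank-one fixed-point pairs. First I would set up the dichotomy: suppose $N$ does \emph{not} act trivially on $\partial X$, so there is some $g\in N$ and some $\xi\in\partial X$ with $g\xi\ne\xi$. The goal is to show $N$ contains a rank one isometry whose attracting/repelling fixed points are ``generic'', and then to bootstrap from a single such element to $\Lambda(N)=\partial X$ using normality of $N$ in $H$.

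The key steps, in order: (1) Using that $\Lambda(H)=\partial X$ and (by hypothesis) $H$ contains a rank one isometry, invoke the cited result of Hamenst\"adt (\cite{Ham09}) that pairs of fixed points of rank one elements of $H$ are dense in $(\partial X\times\partial X)\setminus\Delta$. (2) Pick a rank one $h\in H$ with attracting/repelling points $h^\pm$ chosen so that $g h^+\ne h^+$ and $gh^+\notin\{h^+,h^-\}$, and similarly control $gh^-$; this is possible because $g$ moves at least one boundary point and (by continuity of the $g$-action on $\partial X$, $X$ proper) moves an open set of points, while the rank-one pairs are dense. (3) Form the commutator $g'=[g^{-1},h^n]=g^{-1}h^{-n}gh^{n}\in N$ (here normality is used: $h^{-n}gh^n\in N$). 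For $n$ large, north--south dynamics of $h$ forces $h^{-n}g h^{n}$ to be close, on large compact pieces, to an isometry that behaves like ``translate toward $gh^-\leftrightarrow gh^+$''; combined with $g^{-1}$ one produces a hyperbolic element of $N$ that is rank one (its axis can be arranged not to bound a flat half-plane because it is built from the rank one $h$ and its fixed-point pair is close to the rank-one pair $(h^-,h^+)$, which lies in the rank-one locus). This yields a rank one element $\gamma\in N$. (4) Once $N$ contains a rank one element, $N$ is non-elementary: if $N$ fixed a point $\eta\in\partial X$ or had $|\Lambda(N)|\le 2$, then since $N\vartriangleleft H$, the set $\Lambda(N)$ (or $\mathrm{Fix}_{\partial X}(N)$) would be $H$-invariant, forcing $H$ to preserve a finite subset of $\partial X$ or to fix $\Lambda(N)$ setwise --- contradicting that $H$ is non-elementary and $\Lambda(H)=\partial X$. (5) Finally $\Lambda(N)=\partial X$: $\Lambda(N)$ is a closed $H$-invariant subset of $\partial X$ (invariance from $N\vartriangleleft H$) containing the fixed pair of a rank one element; by minimality of the $H$-action on $\Lambda(H)=\partial X$ (a standard consequence of north--south dynamics of rank one elements in $H$ together with $\Lambda(H)=\partial X$), any nonempty closed $H$-invariant subset is all of $\partial X$, so $\Lambda(N)=\partial X$.

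I would organize this so that steps (4) and (5) reuse one lemma: \emph{if $A\vartriangleleft B$, $B$ acts on $\partial X$ with $B$ non-elementary containing a rank one isometry and $\Lambda(B)=\partial X$, and $A$ contains a rank one isometry, then $\Lambda(A)=\partial X$ and $A$ is non-elementary} --- which follows from $H$-invariance of $\Lambda(A)$ plus minimality. The real content is therefore concentrated in steps (2)--(3): manufacturing a rank one element \emph{inside} $N$ from the mere hypothesis that $N$ moves a point of $\partial X$.

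The main obstacle I expect is precisely step (3): verifying that the commutator (or a suitable power/product) is genuinely a rank one \emph{hyperbolic} isometry rather than, say, parabolic or bounded, and that its axis does not bound a flat half-plane. The clean way to handle this is to choose $h$ and $n$ so that $g^{-1}h^{-n}gh^n$ plays ping-pong on four disjoint closed neighborhoods of $h^+,h^-,gh^+,gh^-$ in $\partial X$: north--south dynamics gives, for $n\gg0$, that $h^n$ compresses $\partial X\setminus\{h^-\}$ into a small neighborhood $U^+$ of $h^+$ and $h^{-n}$ compresses $\partial X\setminus\{h^+\}$ into a small neighborhood $U^-$ of $h^-$; conjugating by $g^{-1}$ translates this to neighborhoods of $g^{-1}h^\pm$, and after composing one checks the resulting element has an attracting--repelling pair, hence is hyperbolic, with both fixed points forced into the rank-one locus (which is open, and here we use Proposition~\ref{strongly separated hyperplane} / strong separation to certify ``no flat half-plane'', or more directly Hamenst\"adt's openness of the rank one locus). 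Keeping track of \emph{which} point $g$ actually moves, and ensuring the four neighborhoods can be taken pairwise disjoint while still containing a rank-one pair, is the delicate bookkeeping; everything else is soft.
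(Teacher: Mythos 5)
The genuine gap is in your step (3), which is exactly where you concentrate all the content. From ``the commutator $\gamma=[g,h^{n}]$ has an attracting--repelling pair of neighbourhoods on $\partial X$'' you conclude ``hence is hyperbolic'', and you certify rank one by saying its fixed pair is close to the rank-one pair $(h^{-},h^{+})$ and that the rank-one locus is open. Neither inference is valid as stated for a proper CAT(0) space: an isometry exhibiting north--south-like behaviour on the visual boundary need not be semisimple (ruling out a parabolic requires an argument inside $X$, typically via the contraction properties of an axis of $h$, not boundary topology alone), and ``nearby fixed pair implies no flat half-plane'' is not a soft openness statement --- it needs uniform contraction constants and is essentially the content of the theorem you would have to quote anyway. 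The honest way to close this is to cite the known result that if $a,b$ are rank-one isometries with disjoint fixed-point pairs, then $a^{n}b^{n}$ is a rank-one (hyperbolic) isometry for all large $n$ (Ballmann, Hamenst\"adt, Caprace--Fujiwara), applied to $a=ghg^{-1}$ and $b=h^{-1}$, noting $a^{n}b^{n}=[g,h^{n}]\in N$ by normality; your step (2) is fine (choose an open $V$ with $gV\cap V=\emptyset$ and, by density, a rank-one pair inside $V\times V$), and with that citation your steps (1)--(5) do assemble into a proof.

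Even once repaired, though, this is much heavier than the statement requires --- note the conclusion never asserts that $N$ contains a rank-one isometry (in the paper that is extracted later, by applying rank rigidity to $N$ in the proof of the simplicity theorem). The paper's own argument is entirely soft. First, $\Lambda(N)$ is a closed $H$-invariant subset of $\partial X$ (conjugate the approximating sequences $n_{k}$ by $h$), so by minimality of the $H$-action on $\Lambda(H)=\partial X$ it is either empty or all of $\partial X$. If it is empty, $N$ has bounded orbits, hence fixes a point $x_{0}\in X$; then for $n\in N$ and $h_{k}x_{0}\to\xi$ one writes $nh_{k}=h_{k}n_{k}'$ with $n_{k}'\in N$, so $nh_{k}x_{0}=h_{k}n_{k}'x_{0}=h_{k}x_{0}\to\xi$, i.e.\ $n\xi=\xi$ for every $\xi$, so $N$ acts trivially --- giving the dichotomy. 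Finally, if $N$ fixes a point at infinity, the set of $N$-fixed boundary points is nonempty, closed and $H$-invariant, hence equals $\partial X$ by minimality, again forcing the trivial action; together with $|\partial X|>2$ this gives non-elementarity. So rank one enters only through minimality of the limit-set action, and no ping-pong or construction of elements inside $N$ is needed.
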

\begin{proof}
Assume that $N$ does not act trivially on $\partial X$. First we
show that the limit set $\Lambda(N)$ is either $\Lambda(H)$ or empty.

Let $\xi\in\Lambda(N)$ and $h\in H$. There exists a sequence $n_{k}\in N$
such that for all $x\in X$, $n_{k}x\to\xi$. Apply $h\in H$ to $\xi$.
By normality of $N$, we get a sequence $n_{k}^{\prime}=hn_{k}h^{-1}\in N$.
\[
h\xi\leftarrow hn_{k}.x=n_{k}^{\prime}.\left(hx\right)
\]

Therefore $h\xi\in\Lambda(N)$; hence $\Lambda(N)$ is $H$-invariant.
By minimality of the action of $H$ on $\Lambda(H)$ (see \cite{Ham09})
we get $\Lambda(N)=\Lambda(H)\mbox{ or }\emptyset$. 

To show that $\Lambda(N)=\Lambda(H)$ assume for contradiction that
$\Lambda(N)=\emptyset$. Then the action of $N$ on $X$ is bounded;
hence has a fixed point $x_{0}$. Since $N$ acts non-trivially on
$\partial X$, there exists $\xi\in\partial X$ and $n\in N$ such
that $n.\xi\ne\xi$. By $\partial X=\Lambda(H)$, there exists a sequence
$h_{k}\in H$ such that $h_{k}.x_{0}\to\xi$. By applying $n$ we
get $nh_{k}.x_{0}\to n.\xi$. On the other hand, by normality, we
have $nh_{k}=h_{k}n_{k}^{\prime}$ (for some $n_{k}^{\prime}\in N$).
Thus, $nh_{k}.x_{0}=h_{k}n_{k}^{\prime}.x_{0}=h_{k}.x_{0}\to\xi$.
Therefore, we get $\xi=n.\xi$ which contradicts our assumption.

To see that $N$ is non-elementary, we are left to show that $N$
does not fix a point at infinity. Assume by contradiction that $\xi\in\partial X$
is $N$-fixed, then by normality $g\xi$ is $N$-fixed for all $g\in H$.
By minimality we get that $\Lambda(H)=\partial X$ is $N$-fixed.
Hence $N$ acts trivially on $\partial X$. But we assumed that $N$
acts non-trivially on $\partial X$.\end{proof}
\begin{rem*}
Without assuming $\partial X=\Lambda\left(H\right)$, the same argument
shows that for every normal subgroup $N\vartriangleleft H$ either
$N$ acts trivially on $\Lambda(H)$ or $\Lambda\left(N\right)=\Lambda\left(H\right)$.
\end{rem*}
We shall now prove Theorem \ref{thm:Simplicity}:
\begin{proof}

\begin{figure}
\centering
\includegraphics[width=0.5\textwidth]{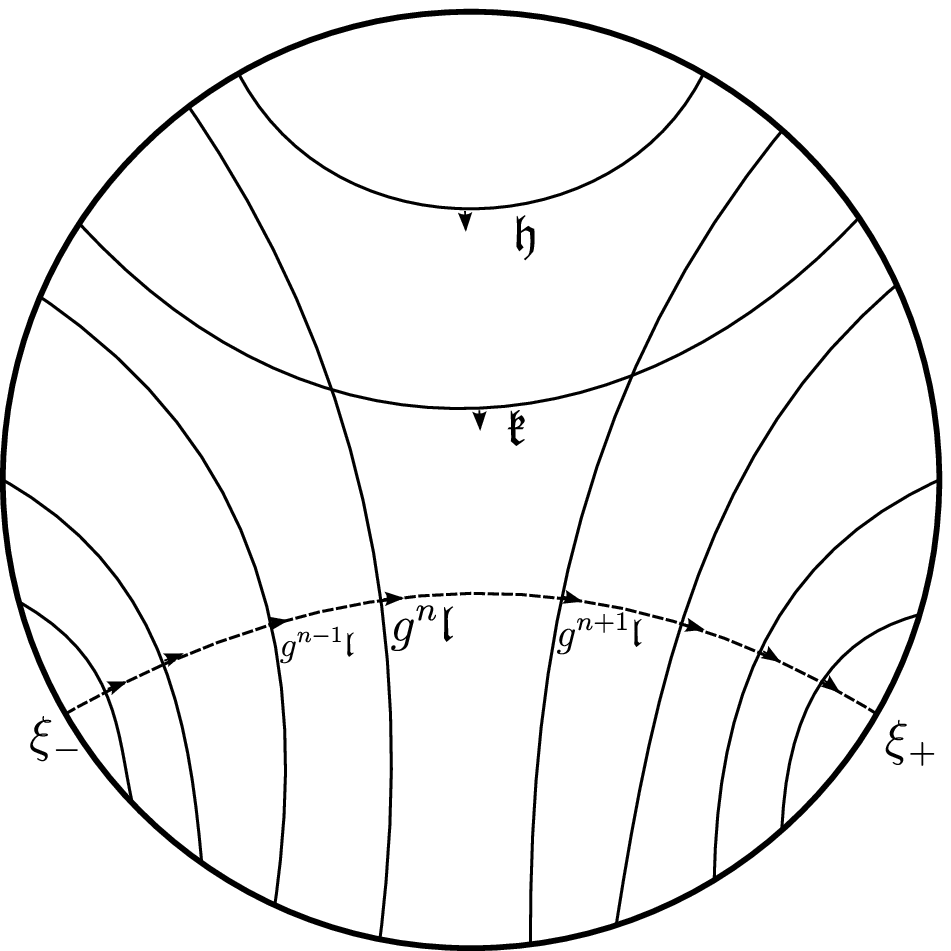}

\caption{ The action of $g$ on $X$ and the halfspaces $\hhs,\,\khs$ and $\mathfrak{l}$}
\label{fig:simplicity}
\end{figure}

Let $N\vartriangleleft G^{+}$ and assume $N$ acts non-trivially
on $\partial X$. In order to prove the theorem, it suffices to show
that $\Fix_{G}(\mathfrak{h})\subset N$ for all $\mathfrak{h}\in\mathcal{H}(X)$.
Let $\mathfrak{h}\in\mathcal{H}(X)$. Apply Proposition \proref{boundary of a normal subgroup}
and the rank rigidity theorem first on $G^{+}\vartriangleleft G$
and then on $N\vartriangleleft G^{+}$, to obtain that $\Lambda(N)=\partial X$,
$N$ is non-elementary and contains a rank one isometry. By Claim
\ref{strongly separated hyperplane} there exists a half-space $\khs\subset\hhs$
such that $\hhat$ and $\khat$ are strongly separated. The set $\partial^{2}X\cap\left(\khs_{\infty}\times\khs_{\infty}\right)$
is a non-empty open set in $\partial^{2}X$, hence, by Theorem 1.1(2)
of \cite{Ham09}, there exists a rank one isometry $g$ whose
two fixed points $(\xi_{+},\xi_{-})$ in $\partial X$ are in $\partial^{2}X\cap\left(\khs_{\infty}\times\khs_{\infty}\right)$.
By passing to a power of $g$ we may further assume that there exists
$\mathfrak{l}\in\mathcal{H}(X)$ such that $g\mathfrak{l}\subset\mathfrak{l}$.
See Figure \ref{fig:simplicity}.

Let $F={\rm Fix}_{G}(\bigcup_{n\in\mathbb{Z}}g^{n}\hat{\mathfrak{l}})$.
By the above we see that $\bigcup_{n\in\mathbb{Z}}g^{n}\hat{\mathfrak{l}}\subset\hhs$.
Therefore, by Lemma \ref{lem:F=00003D[g,F]} we have:
\[
\Fix_{G}(\mathfrak{h})\subset F=[g,F]\subset N
\]
\end{proof}
\begin{rem*}
In fact, one can assume a weaker version of property (P). For example,
assuming that for every element $g\in G$ and $\mathfrak{l}\in\Hs(X)$
such that $g\mathfrak{l}\subset\mathfrak{l}$, the map \ref{eq: property P}
is an isomorphism for the collection $\left\{ g^{n}\mathfrak{\hat{l}}\right\} _{n\in\mathbb{Z}}$.\end{rem*}

\bibliographystyle{plain}
\bibliography{Bibliography}

\def\polhk#1{\setbox0=\hbox{#1}{\ooalign{\hidewidth
  \lower1.5ex\hbox{`}\hidewidth\crcr\unhbox0}}}
\begin{thebibliography}{10}

\bibitem{BaBr94}
W.~Ballmann and M.~Brin.
\newblock Polygonal complexes and combinatorial group theory.
\newblock {\em Geom. Dedicata}, 50(2):165--191, 1994.

\bibitem{BeCh12}
Jason Behrstock and Ruth Charney.
\newblock Divergence and quasimorphisms of right-angled {A}rtin groups.
\newblock {\em Math. Ann.}, 352(2):339--356, 2012.

\bibitem{Bou97}
Marc Bourdon.
\newblock Immeubles hyperboliques, dimension conforme et rigidit{\'e} de
  mostow.
\newblock {\em Geometric \& Functional Analysis GAFA}, 7(2):245--268, 1997.

\bibitem{Cap14}
Pierre-Emmanuel Caprace.
\newblock Automorphism groups of right-angled buildings: simplicity and local
  splittings.
\newblock {\em Fundamenta mathematicae}, 224(1):17--51, 2014.

\bibitem{CaSa11}
Pierre-Emmanuel Caprace and Michah Sageev.
\newblock Rank rigidity for {CAT}(0) cube complexes.
\newblock {\em Geom. Funct. Anal.}, 21(4):851--891, 2011.

\bibitem{FHT08}
Benson Farb, G~Christopher Hruska, and Anne Thomas.
\newblock Problems on automorphism groups of nonpositively curved polyhedral
  complexes and their lattices.
\newblock {\em arXiv preprint arXiv:0803.2484}, 2008.

\bibitem{GHST14}
Michael Giudici, Cai~Heng Li, {\'A}kos Seress, and Anne Thomas.
\newblock Characterising vertex-star transitive and edge-star transitive
  graphs.
\newblock 2013.

\bibitem{GoRo01}
Chris Godsil and Gordon Royle.
\newblock {\em Algebraic graph theory}, volume 207 of {\em Graduate Texts in
  Mathematics}.
\newblock Springer-Verlag, New York, 2001.

\bibitem{Gro87}
Mikhael Gromov.
\newblock {\em Hyperbolic groups}.
\newblock Springer, 1987.

\bibitem{Hag91}
Fr{\'e}d{\'e}ric Haglund.
\newblock Les poly\`edres de {G}romov.
\newblock {\em C. R. Acad. Sci. Paris S\'er. I Math.}, 313(9):603--606, 1991.

\bibitem{Hag02}
Fr{\'e}d{\'e}ric Haglund.
\newblock Existence, unicit{\'e} et homog{\'e}n{\'e}it{\'e} de certains
  immeubles hyperboliques.
\newblock {\em Mathematische Zeitschrift}, 242(1):97--148, 2002.

\bibitem{HaPa98}
Fr\'ed\'eric Haglund and Fr\'ed\'eric Paulin.
\newblock Simplicit{\'e} de groupes d'automorphismes d'espacesa courbure
  n{\'e}gative the epstein birthday schrift, 181--248.
\newblock {\em Geom. Topol. Monogr}, 1, 1998.

\bibitem{Ham09}
Ursula Hamenst{\"a}dt.
\newblock Rank-one isometries of proper {${\rm CAT}(0)$}-spaces.
\newblock In {\em Discrete groups and geometric structures}, volume 501 of {\em
  Contemp. Math.}, pages 43--59. Amer. Math. Soc., Providence, RI, 2009.

\bibitem{Laz14}
Nir Lazarovich.
\newblock Uniqueness of homogeneous {CAT}(0) polygonal complexes.
\newblock {\em Geometriae dedicata}, 168(1):397--414, 2014.

\bibitem{Sag95}
Michah Sageev.
\newblock Ends of group pairs and non-positively curved cube complexes.
\newblock {\em Proceedings of the London Mathematical Society}, 3(3):585--617,
  1995.

\bibitem{Swi98}
Jacek \'{S}wi\k{a}tkowski.
\newblock Trivalent polygonal complexes of nonpositive curvature and platonic
  symmetry.
\newblock {\em Geom. Dedicata}, 70:87--110, 1998.

\bibitem{Tit70}
Jacques Tits.
\newblock Sur le groupe des automorphismes d'un arbre.
\newblock In {\em Essays on topology and related topics}, pages 188--211.
  Springer, 1970.

\bibitem{Wis96}
Daniel~T Wise.
\newblock {\em Non-positively curved squared complexes aperiodic tilings and
  non-residually finite groups}.
\newblock PhD thesis.

\end{thebibliography}

{\noindent
Nir Lazarovich. Email: nirl@tx.technion.ac.il\\Department of Mathematics, Technion, Haifa 32000, Israel}

\end{document}